\definecolor{refkey}{gray}{.75}
\def\e{{\rm e}}
\def\eps{\varepsilon}
\def\d{{\rm d}}
\def\dist{{\rm dist}}
\def\ct{\mathbf{t}}
\def\Hh {{\mathsf H}}
        \def\sh {{\mathsf{sh}}}
\def\t {{\mathbf T}}
\def\R {\mathbb{R}}
\def\supp {{\mathrm{supp}}}
\def\V {{\mathbf V}}
\def\G{{\mathcal G}}
\def\C {{\mathcal C}}
\def\L{\mathbf{L}}
\def\O{\mathbf{O}}
\def\D {{\mathcal D}}
\def\E {{\mathbb E}}
\def\I {{\mathcal I}}
\def\RR {{\mathcal R}}
\def\F {{\mathcal F}}
\def\S {{\mathbf S}}
\def \ecc{\mathsf{ecc}}
\def\size {{\mathrm{size}}}
\def\T {{\mathcal T}}
\def\M {{\mathsf M}}
\def\Z {{\mathbb Z}}
\def\1 {{\mbox{\boldmath 1}}}
\def \l {\langle}
\def \r {\rangle}
\def \ann{\mathsf{ann}}
\def \annf{\mathrm{ann}}
\def \scl{\mathsf{scl}}
\def \and{\quad\text{and}\quad}
\newcommand{\cic}[1]{\mbox{\boldmath$#1$}}
\def \no#1#2#3 {{\bf #1} (#3), #2.}
\def \eds#1#2#3 {#1, #2, #3.}
\newtheorem{proposition}{Proposition}[section]
\newtheorem{theorem}{Theorem}
\newtheorem*{cj}{Conjecture}
\newtheorem{lemma}[proposition]{Lemma}
\theoremstyle{definition}
\newtheorem{definition}[proposition]{Definition}
\newtheorem{remark}[proposition]{Remark}
\newtheorem*{remark*}{Remark}
\newtheorem*{warn*}{A word of warning}
\numberwithin{equation}{section}
\title[ $L^p$ bounds for maximal directional singular integrals]
{Logarithmic $L^p$ bounds for maximal directional singular integrals in the plane}
\author[C.\ Demeter]
{Ciprian Demeter}
\address{
Dept.\ of Mathematics
\newline\indent
Indiana University
\newline\indent
Bloomington, IN 47405 - USA}
\email{demeter@indiana.edu {\rm (C.\ Demeter)} }
\author[F.\ Di Plinio]
{Francesco Di Plinio}
\address{
Dept.\ of Mathematics  \ \& \ Institute for Scientific Computing and Applied Mathematics
\newline\indent
Indiana University
\newline\indent
Bloomington, IN 47405 - USA}
\email{fradipli@indiana.edu {\rm (F.\ Di Plinio)} }
\subjclass{Primary: 42B20; Secondary: 42B25.}
 \keywords{Maximal singular integrals, Kakeya maximal function}
\thanks{The first author is partially supported by a Sloan Research Fellowship and by NSF Grant DMS-0901208. The second author was partially
supported by the National Science Foundation under the grant
  NSF-DMS-0906440, and by the Research Fund of Indiana University.}
\begin{document}

\begin{abstract}  Let $K$ be a Calderon-Zygmund convolution kernel on $\R$. We discuss the $L^p$-boundedness of the maximal directional singular integral
$$
T_{\V} f (x)= \sup_{v \in \V} \Big| \int_\R f(x+t v) K(t) \, \d t \Big|
$$
where $\V$ is a finite set of $N$ directions. Logarithmic  bounds (for $2\leq p<\infty)$ are established for a set $\V$ of arbitrary structure.
Sharp bounds are proved for lacunary   and   Vargas sets of directions.   The latter include the case of uniformly distributed directions and the finite truncations of the Cantor set.

We make use of both classical harmonic analysis methods and  product-BMO based time-frequency analysis techniques. As a further application of the latter, we derive an $L^p$ almost orthogonality principle for Fourier restrictions to cones.
% If $\V$ is a lacunary set,  we find the sharp bound
%$$
%\|T_\V f\|_p \leq C_p \sqrt{\log N} \|f\|_p, \qquad 1<p <\infty.
%$$
%In the case of the Hilbert transform ($K(t)=\frac1t$) , we prove the sharp (apart  from doubly logarithmic terms) bound
%$$
%\big|\big\{x \in \R^2: T_\V f(x) >\lambda \big\}\big|\lesssim\frac{\log N(\log \log N)^c}{\lambda^2}\|f\|^2_2,
%$$
%for all Vargas-type sets $\V$ of cardinality $N$. This includes the case of equispaced $\V$ and of finite truncations of the Cantor set.
\end{abstract}
\maketitle
\noindent
\section{Introduction and main results}
\subsection{Maximal directional singular integrals} Let $m$ be a H\"ormander-Mikhlin multiplier on $\R$.
For $f \in \C^\infty_0(\R^2)$, and $v \in S^1$, consider the directional multiplier
\begin{equation}
\label{sect1-Tmv}
T_{v} f (x_1,x_2)= \int_{\R^2} \hat f(\xi_1,\xi_2) m( \xi \cdot v)\e^{i(x_1\xi_1 +x_2\xi_2)} \, \d \xi_1 \d \xi_2.
\end{equation}
The multiplier $T_v$ admits the singular integral  representation
$$
T_{v} f (x) = \int_\R f(x+t v) K(t) \, \d t, \qquad x \not\in  \mathrm{supp}\, f,
$$
where  $K=\check m$ is a Calderon-Zygmund kernel. The most basic example   is the directional Hilbert transform
$$
H_{v} f (x) = \int_\R f(x+t v)  \, \frac{\d t}{t}, \qquad x \not\in \mathrm{supp}\, f,
$$
which corresponds to the multiplier $\mathrm{sign}(\xi)$. Assume now that we have a vector field $\textbf{v}:\R^2\to S^1$ and define the corresponding directional singular integral operator
$$T_{\textbf{v}}(f)(x)=T_{\textbf{v}(x)} f(x).$$
Various cases where the vector field   satisfies certain geometric or analytic constraints have been recently investigated. In \cite{LL1} and \cite{LL2}, Lacey and Li developed a beautiful theory for $C^{1+\epsilon}$ vector fields. Their analysis revealed intricate connections between the directional Hilbert transform, Carleson's theorem on the convergence of Fourier series and a certain restricted version of the Kakeya maximal function.

The Lacey-Li techniques have been recently refined by Bateman and Thiele. They combined  novel density estimates for rectangles with a new approach to proving vector valued inequalities, to obtain $L^p$ estimates for $H_{\textbf{v}}$ in the case when the vector field depends on just one coordinate \cite{BaTh}. On the other hand, Stein and Street \cite{SS1} have obtained analogous results for the case of real analytic vector fields.

Continuing the initial investigation from \cite{CD10}, in this paper we analyze another case of interest, the one where the range of the vector field is finite.   More precisely, we are concerned with  $L^p$ bounds for the maximal directional multiplier
\begin{equation}
\label{sect1-mdm}
T_{\V} f (x)= \sup_{v \in \V} \big| T_v f(x) \big|,\end{equation}
where $\V \subset S^1$ is a set of $N$ directions. Due to the special role played by the maximal directional Hilbert transform, we will reserve for it the notation $H_\V$.

It has been long recognized that there is a close connection between maximal and singular integral operators in harmonic analysis, and indeed, in most classical cases the two types of operators have similar boundedness properties. In many ways this paper will argue against the generality of this principle. We start by briefly recalling some history. The relevant directional maximal function
\begin{equation}
\label{sect1-Mv}
M_{\V} f (x)= \sup_{v \in \V} \big| M_v f(x) \big|\qquad M_{v} f (x)=\sup_{\eps>0}\frac{1}{2\eps} \Big| \int_{-\eps}^\eps   f(x+tv) \, \d t \Big|,
\end{equation}
has been intensely studied and is now completely understood. The sharp bounds (for generic $\V$)
\begin{equation}
\label{sect1-Mvbound}
\|M_\V\|_{2\to 2} \lesssim {\log N}, \quad \|M_\V\|_{2\to 2,\infty} \lesssim \sqrt{\log N}, \quad \|M_\V\|_{p\to p}  \lesssim (\log N)^{\frac1p}, \quad p>2,
\end{equation}
were first proved  in \cite{KATZ1}. The more recent work \cite{BAT} shows that bounds become independent of $N$
$$\|M_\V\|_{p\to p} \lesssim_p 1,\qquad 1<p<\infty,$$
if and only if $\V$ is lacunary of finite order.

In striking contrast to $M_{\V}$, the sharp (in terms of $N$) upper bounds for the operator norms $\|H_\V\|_{p\to p}$ are only known for $1<p\le 2$.
Taking $f$ to be the indicator function of the unit ball and $\V$ uniformly distributed, one gets the lower bounds (see \cite{CD10})
$$
\|H_{\V} f \|_{p } \gtrsim N^{\frac1p}\|f\|_p, \; 1<p<2, \qquad \|H_{\V} f \|_{2 } \gtrsim \log N\|f\|_2.
$$
These lower bounds are in fact also upper bounds for generic $\V$. The case $p=2$ is a simple consequence of the Rademacher-Menshov theorem. Indeed, $|H_{\V}f(x)|$ is bounded by the sum of two operators of the form
\begin{equation}
\label{introRM}
\max_{1\leq \nu \leq N}\Big| \sum_{j=1}^{\nu} f_j (x) \Big|
\end{equation}
where $f_j$ are  Fourier restrictions of $f$ to disjoint frequency cones rooted at the origin. On the other hand, for each $N$ there exists an orthonormal set $\{f_j: j=1, \ldots, N\}$ with    $$
\Big| \Big\{ x \in \R^2: \max_{1\leq \nu \leq N}\Big| \sum_{j=1}^{\nu} f_j (x) \Big|\gtrsim \sqrt{N}\log N \Big\} \Big|
> 1
$$
(this is Menshov's counterexample, see \cite{OS}); thus one cannot get a weak $L^2$ bound for $H_\V$ which is better than the strong $L^2$ bound  by simply invoking Rademacher-Menshov type results.

 In the general case, the  sharp bound
$$
\|T_\V f\|_2 \lesssim \log N \|f\|_2
$$
 has been established in \cite{CD10}. Quite surprisingly, the article \cite{KARAG} proves the lower bound  \begin{equation} \label{karag}
\|H_\V \|_{2 \to 2,\infty} \gtrsim \sqrt{\log N}, \qquad \forall\; \V \subset S^1, \#\V=N.
\end{equation}Thus $H_\V$ is an unbounded operator on $L^2$  as soon as $\V$ contains infinitely many directions, in particular for infinite lacunary sets. This is in sharp  contrast with the directional maximal function.

Our main goal is to investigate the sharp dependence on $N$ of the $L^p$ ($2<p<\infty$) and also weak $L^2$ operator norms of $T_\V$.   In  analogy with the case of the maximal function \eqref{sect1-Mvbound}, one expects logarithmic-type bounds for all $2\leq p <\infty$. However, at least three structural differences between $T_\V$ and $M_\V$ make the former harder to treat. The maximal directional singular integral is not a positive operator; it is a {\em sum} not a {\em maximum} of single scale operators.  Moreover, unlike in the case of $M_\V$ (which is trivially bounded on $L^\infty$ for any $\V$), there is no immediate evidence of an endpoint result for $T_\V$.

We briefly mention some of the  preexisting literature on maximal directional singular integrals. In \cite{CDR},   the authors deal with the boundedness of $\{H_v f(x):x\in \R^n,v \in S^{n-1}\}$ and of the relative maximal truncations   in the mixed norm spaces $L^p(\R^n; L^q(S^{n-1}))$, $1<p,q<\infty$.
In the article \cite{DV}, it is proven that,  whenever $\V\subset S^1$ is a set with Minkowski dimension $d(\V)<1$,   $M_\V$ and $H_\V$ are bounded operators on the subspace of the radial functions in $ L^p(\R^2)$ for $p>1+d(\V)$, and unbounded for $p<1+d(\V)$. The more recent article \cite{KIM} reproves that $\|H_\V\|_{2 \to 2}\sim \log N$ when $\#\V=N$ without appealing to Rademacher-Menshov explicitly, and by the same method obtains the sharp $L^2$ bound (which is $O(N^{\frac n2-1})$) for the analogous maximal directional Hilbert transform in $\R^ n$, $n>2$. Our results partially answer both open problems mentioned in \cite{KIM}.

\subsection{Notation} Throughout the paper, $m$ stands for a H\"ormander-Mikhlin multiplier on $\R$, that is
$$
m \in \C^{\infty}(\R\backslash\{0\}), \qquad |\partial^{\alpha} m(\xi)| \lesssim_\alpha |\xi|^{-\alpha}, \;  \alpha \geq 0;
$$
with corresponding multiplier operator
$$
T: L^2 (\R) \to L^2 (\R), \qquad Tf(x) = \int_\R \hat f(\xi) m(\xi) \e^{ix \xi}\, \d \xi.
$$
We will also make use of the maximal truncation  of $T$, and adopt the notation
$$
T_\star f( x) = \sup_{\eps>0}|T_\eps f(x)|, \qquad T_\eps f(x)=  \int_{|t|>\eps} f(x-t) K(t) \, \d t , \qquad K=\check m.
$$
The values of the positive constants $C,c$, as well as the implicit constants hidden in the notation $\lesssim$ may vary from line to line, and, unless otherwise specified, are  allowed to depend only on the multiplier $m$.

The Hardy-Littlewood maximal function of $f$ on $\R^n$ ($n=1,2$) is denoted by $Mf$.
For    $R \subset \R^1$ or $\R^2$, we set $\mathbb E_R f= \frac{1}{|R|}\int_R f,$
and, for a collection $\RR$ of rectangles in the plane, we denote the corresponding maximal function by
$$
\M_\RR f(x)= \sup_{R \in \RR} |\mathbb E_ R f| \cic{1}_R (x), \qquad x \in \R^2.
$$

\subsection{The main results}
The first main result concerns arbitrary finite sets of directions. We believe that the exponent one of the term $\log N$  gets sharp in the limiting case $p\to\infty,$ see Conjecture \ref{general_conj}.

\begin{theorem} \label{main-ub1}
Let $m$ be a H\"ormander-Mihlin multiplier and let be $T_\V$ defined as in \eqref{sect1-mdm}. For  any given set $\V$ of $N$ directions
\begin{equation}
\label{main-ub1eq}
\|T_\V \|_{p\to p} \leq Cp \log N, \qquad 2<p<\infty.
\end{equation}
Moreover the following endpoint result holds: if $\supp \,f \subset Q \subset \R^2$
\begin{equation}
\label{main-ub1ep}
\big|\big\{x \in Q: |T_\V f(x)|> \lambda \log N \big\}\big| \leq C \exp\big( -c \textstyle\frac{\lambda }{\|f\|_\infty}\big)|Q|, \qquad \forall \lambda>0.
\end{equation}
\end{theorem}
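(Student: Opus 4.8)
One natural route --- and the one I would follow --- is to treat the endpoint estimate \eqref{main-ub1ep} as the heart of the matter and to deduce \eqref{main-ub1eq} from it together with the $L^2$ bound $\|T_\V\|_{2\to 2}\lesssim\log N$ of \cite{CD10}. Observe first that \eqref{main-ub1ep} is equivalent to the local exponential integrability bound
\[
\frac{1}{|Q|}\int_Q\exp\!\Big(c\,\frac{|T_\V f(x)|}{\log N\,\|f\|_\infty}\Big)\,\d x\lesssim 1,\qquad \supp f\subset Q,
\]
i.e.\ to $\|T_\V f\|_{\exp L(Q)}\lesssim \log N\,\|f\|_\infty$. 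One should keep in mind that, unlike $M_\V$, the operator $T_\V$ has no $L^\infty$ bound, and even a single $T_v$ fails to map $L^\infty(\R^2)$ into $\mathrm{BMO}(\R^2)$ (think of $\mathrm{sign}(x_2)$ times a function of $x_1$ on a tall thin cube), so working with the weaker local $\exp L$ space is unavoidable.

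The estimate above rests on two ingredients. The first is the single-direction bound $\|T_vf\|_{\exp L(Q)}\lesssim\|f\|_\infty$ for $\supp f\subset Q$, \emph{uniformly in $v\in S^1$}. I would prove this by slicing: after rotating so that $v=e_1$, $T_v$ acts as the one-dimensional operator $T$ in the first variable, slice by slice. On the slice $\{x_2=\mathrm{const}\}$ the function $f(\cdot,x_2)$ is supported in an interval $I(x_2)$ of length $\le\mathrm{diam}(Q)$, and the standard facts $\|Tg\|_{\mathrm{BMO}(\R)}\lesssim\|g\|_\infty$, John--Nirenberg, and $|\langle Tg\rangle_{I}|\lesssim\|g\|_\infty$ when $\supp g\subset I$ (via $T^\ast\mathbf 1_I$) give $\int_{I(x_2)}\exp(c|T_vf(x_1,x_2)|/\|f\|_\infty)\,\d x_1\lesssim |I(x_2)|$ uniformly in $x_2$; integrating in $x_2$ (so that $\int|I(x_2)|\,\d x_2=|Q|$) yields the claim with a constant independent of $v$ and $Q$.

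The second ingredient is a supremum lemma: if $\|g_i\|_{\exp L(Q)}\le A$ for $1\le i\le N$, then $\|\sup_i|g_i|\|_{\exp L(Q)}\lesssim A\log N$. This follows by dominating $\sup_i|g_i|\le(\sum_i|g_i|^q)^{1/q}$, using the moment bounds $\frac1{|Q|}\int_Q|g_i|^q\lesssim q!\,A^q$ built into membership in $\exp L$, and optimizing over $q$; the choice $q\sim\log N$ turns the resulting factor $N^{1/q}$ into an absolute constant. Taking $g_v=T_vf$ and combining with the first ingredient proves \eqref{main-ub1ep}. To obtain \eqref{main-ub1eq}, integrate \eqref{main-ub1ep} in $\lambda$: since $\int_0^\infty t^{p-1}e^{-ct}\,\d t=\Gamma(p)c^{-p}$ and $\Gamma(p)^{1/p}\sim p/\mathrm e$, one gets $\|T_\V f\|_{L^p(Q)}\lesssim p\log N\,\|f\|_\infty|Q|^{1/p}$ for $\supp f\subset Q$. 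The crude off-diagonal bound $|T_\V f(x)|\lesssim\|f\|_\infty\,\ell(Q)/\dist(x,Q)$ for $x\notin 2Q$ (the line $x+\R v$ meets $\supp f$ in a segment of length $\lesssim\ell(Q)$ at distance $\gtrsim\dist(x,Q)$) is $p$-th power summable over dyadic annuli \emph{precisely because $p>2$}, upgrading the estimate to $\|T_\V f\|_{L^p(\R^2)}\lesssim p\log N\,\|f\|_\infty|Q|^{1/p}$. A Calderón--Zygmund/Whitney decomposition of a general $f\in L^p$ into bounded pieces on essentially disjoint cubes, handled by this local estimate and its off-diagonal tails, together with the $L^2$ bound of \cite{CD10} for the good part, then gives \eqref{main-ub1eq}; the factor $p$ is intrinsic, being a reflection of the sub-exponential (not $L^\infty$) nature of \eqref{main-ub1ep}.

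The main obstacle, I expect, lies in this last step --- converting the local, $\|f\|_\infty$-weighted estimate into the global $L^p\to L^p$ bound for arbitrary $f\in L^p$. Because $T_\V$ is a directional operator, it does not map $L^\infty(\R^2)$ to $\mathrm{BMO}(\R^2)$ and its sharp maximal function is not controlled by $Mf$, so the usual Fefferman--Stein route is closed; one must instead run the Calderón--Zygmund argument carefully, summing the contributions of the Whitney pieces with the help of the off-diagonal decay (which is exactly where $p>2$ enters) rather than through pointwise domination by a maximal function. The single-direction $\exp L$ bound and the supremum lemma, by contrast, are comparatively soft.
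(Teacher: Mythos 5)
Your route to the endpoint \eqref{main-ub1ep} (slice along $v$, invoke the one--dimensional $T\colon L^\infty\to\mathrm{BMO}$ bound and John--Nirenberg to get a single--direction $\exp L(Q)$ estimate, then combine $N$ of these with a supremum lemma to pay $\log N$) is a correct and genuinely different argument from the paper's: the paper proves the local $L^p$ estimate $\|T_\V f\|_{L^p(2Q)}\le Cp\log N\|f\|_{L^p(Q)}$ \emph{first}, via Marcinkiewicz and a splitting at height $c_0\lambda$, and then \emph{deduces} \eqref{main-ub1ep} by optimizing over $p$. Your direct proof of the endpoint is nicer and more elementary in that step.

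The gap is in the passage from \eqref{main-ub1ep} back to \eqref{main-ub1eq}. Your endpoint estimate only encodes the pair $(\|f\|_\infty, |Q|)$; after integrating in $\lambda$ it gives $\|T_\V f\|_{L^p(Q)}\lesssim p\log N\,\|f\|_\infty|Q|^{1/p}$, which for $\supp f\subset Q$ is weaker than the needed $\|T_\V f\|_{L^p(2Q)}\lesssim p\log N\,\|f\|_{L^p(Q)}$ whenever $f$ is concentrated on a small part of $Q$. The Calder\'on--Zygmund decomposition does not repair this: the Whitney pieces $b_j$ carry no $L^\infty$ bound, so neither your local estimate nor your off--diagonal bound $|T_\V f(x)|\lesssim\|f\|_\infty\ell(Q)/\dist(x,Q)$ applies to them; and the good part $g$ (bounded by $\lambda$) fed into your endpoint yields only $|\{|T_\V g|>\gamma\lambda\log N\}\cap Q|\lesssim e^{-c\gamma}|Q|$, which is a constant multiple of $|Q|$, not the required $\lesssim(p/\lambda)^p\|f\|_{L^p(Q)}^p$. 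The paper's remedy is precisely the extra information you discard when you pass to $\exp L$: Hunt's exponential good--$\lambda$ inequality bounds $|\{|T_vf_1|>\gamma\lambda,\,M_vf_1<c_0\lambda\}|\lesssim e^{-c_1\gamma}|\{T_\star f_1>\lambda\}|$, and the right--hand side is then controlled by $(Cp/\lambda)^p\|f_1\|_p^p$ via the sharp $A_2$ bound for $T_\star$, so the set measure comes out in terms of $\|f\|_{L^p(Q)}$ rather than $|Q|$. The small ($\le c_0\lambda$) part of $f$ is handled by this refined good--$\lambda$, while the large part is handled by the $L^2$ bound of \cite{CD10} and Chebyshev --- the opposite of the allocation one would naively expect from a CZ decomposition. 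Your plan as written, which sends the bounded part through $\exp L$ and the unbounded pieces through the off--diagonal tail, cannot be made to close; you would in effect need to re-derive Hunt's estimate. It is also worth noting that the paper treats the exterior $\R^2\setminus 2Q$ by dominating $T_\V$ pointwise by $M_\V|f|$ and using $\|M_\V\|_{p\to p}\lesssim(\log N)^{1/p}$, which gives an $\|f\|_p$ bound there; your $\|f\|_\infty|Q|^{1/p}$ bound in the exterior has the same deficiency.
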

\begin{remark} A simpler proof of this result for $T_{\V}=H_{\V}$ is given in  paragraph \ref{aoss}, exploiting the almost $L^p$-orthogonality principle in Theorem \ref{sect2-Lp-ortho}.
\end{remark}

\begin{definition}
\label{lacunary-def}
An ordered  set $\V=\{v_1,v_2,\ldots\} \subset S^1$  (finite or countably infinite)  is called \emph{lacunary} with node $v_\infty \in S^1$ if
\begin{equation}
\label{lacunary-defeq}
|v_{j+1}-v_\infty| \leq \frac{1}{2} |v_j-v_\infty|, \qquad j=1,2,\ldots.
\end{equation}
\end{definition}
\begin{definition}
\label{vargas-def} \cite{KATZ2}
A  finite set $\V \subset S^1$ with $\#\V=N$ is a  \emph{Vargas set} with constant $Q$ if
\begin{equation}
\label{vargas-defeq}
\max \{\#\V': \V'  \textrm{ is  a lacunary subset of } \V \} = Q \log N.
\end{equation}
The Vargas sets with constants $Q=O(1)$ independent of $N$ will simply be referred to as Vargas sets.
Examples include uniformly distributed sets of directions $
\V_N= \big\{v_j=  e^{2\pi i\frac{j}{N}}, j=0, \ldots, N-1\big\}
$
which are easily seen to be  Vargas sets with $Q=1$. The  $N$-truncations of Cantor sets are also Vargas sets with $Q=4$. See Subsection \ref{Vargas-ss} for more details.
\end{definition}
Finite lacunary sets and Vargas sets are at the opposite ends of the spectrum in terms of  maximum cardinality of lacunary subsets. In fact, every set of $N$ directions is ``at least $\frac13$ Vargas'', in the sense that it contains a lacunary subsets with $\frac{\log N}{3}$ elements (a proof is given in Subsection \ref{Vargas-ss}). In these two extreme (but rather relevant) cases, we are able to  improve the result of Theorem \ref{main-ub1} and obtain essentially sharp bounds.
\begin{theorem}  \label{main-ub2}
Let $\V$ be a lacunary set of $N $ directions. We have the sharp bound
\begin{equation} \label{main-ub2eq}
  \|T_\V \|_{p\to p} \lesssim_p \sqrt{\log N}, \qquad 1<p<\infty.
\end{equation}
\end{theorem}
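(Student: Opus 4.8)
The plan is to exploit the lacunary structure to decompose the directional operators into pieces that can be controlled by a single square function, and to conclude via a vector-valued inequality. By a standard limiting argument (monotone convergence), it suffices to treat finite lacunary sets with a uniform bound, so fix $\V=\{v_1,\dots,v_N\}$ lacunary with node $v_\infty$. After a rotation we may assume $v_\infty = e_1$, so that the directions $v_j$ cluster toward the horizontal axis with $|v_j - e_1|\sim 2^{-j}$ (up to relabelling). The key geometric point is that the multiplier $m(\xi\cdot v_j)$ of $T_{v_j}$ is, up to a smooth error, a sum of Fourier projections to the two half-planes $\{\xi\cdot v_j > 0\}$ and $\{\xi\cdot v_j<0\}$; and because the directions are lacunary, the boundary lines $\{\xi\cdot v_j=0\}$ are themselves lacunary in slope. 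This is precisely the configuration governing the classical one-parameter Littlewood--Paley / lacunary maximal multiplier theorems.

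First I would reduce to the maximal Hilbert transform $H_\V$: since $m$ is Hörmander--Mikhlin, write $m = \tfrac12(1+\mathrm{sign}(\xi))\,m(\xi) + \tfrac12(1-\mathrm{sign}(\xi))\,m(\xi)$ and absorb the smooth even part into Hardy--Littlewood maximal bounds, while the odd part is, up to bounded multiplier operators, a superposition of $H_v$; alternatively just note $T_v = m_+(D_v)\circ H_v + (\text{bounded})$ where $m_+$ is independent of $v$. Thus matters reduce to bounding $\sup_j |H_{v_j} f|$. Next, decompose the frequency plane into the lacunary sectors $S_j = \{\xi : \text{slope of }\xi \in [2^{-j-1},2^{-j})\}$ (and their reflections), let $\Delta_j$ be smooth Fourier projections adapted to $S_j \cup (-S_j)$ plus the complementary ``low-frequency'' cone; observe that for each $k$, $H_{v_k} f$ differs from $H_{v_k}\big(\sum_{j\le k}\Delta_j f\big) + (\text{tail of }\Delta_j f,\ j>k)$ by terms where the multiplier of $H_{v_k}$ is smooth on the relevant sector, hence handled by $M$ again. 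The surviving ``diagonal'' pieces are the analytically serious ones: $\sup_k |H_{v_k} \Delta_k f|$, which by the single-direction Hilbert transform bound in each direction is dominated pointwise by $\big(\sum_k |\text{something}\ast \Delta_k f|^2\big)^{1/2}$ after inserting a further $\ell^2$ Cauchy--Schwarz against the smooth cutoffs; this is a standard lacunary square function of Littlewood--Paley type, bounded on every $L^p$, $1<p<\infty$, with an absolute constant. The "$\sqrt{\log N}$'' — rather than $O(1)$ — loss enters only through the handling of the maximal truncations, or equivalently through a Rademacher--Menshov partial-sum step applied to the $O(\log N)$ relevant scales, exactly as in the $p=2$ argument recalled in the introduction; iterating dyadically over the at most $\log N$ blocks and summing the contributions with an $\ell^2$ Cauchy--Schwarz produces the factor $(\log N)^{1/2}$.

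More concretely, I would group the directions into $\log N$ ``lacunary blocks of generations'': write $\V = \bigcup_{\ell=0}^{L} \V_\ell$ with $L\sim\log N$, where $\V_\ell$ collects the directions whose index lies in a dyadic range, so that within and across blocks the associated half-plane multipliers overlap in a controlled finitely-overlapping way. For the maximum over a single block, the standard Chang--Wilson--Wolff / Córdoba--Fefferman type argument gives an $L^p$ bound uniform in $N$ (this is where the lacunarity is fully used, via the boundedness of the lacunary Littlewood--Paley square function and Fefferman--Stein vector-valued maximal inequalities); then $\sup_{v\in\V}|H_v f| \le \sum_{\ell\le L} \sup_{v\in\V_\ell}|H_v f|$ — no, one does better: group so that $\sup_{v\in\V}|H_v f|^2 \lesssim \sum_\ell (\sup_{v\in\V_\ell}|H_v f|)^2$ fails in general, so instead I would run the Rademacher--Menshov splitting into $\log N$ consecutive partial-sum operators of the form \eqref{introRM} restricted to a single block, each bounded on $L^p$ uniformly, and use that there are only $\log N$ of them with the $\ell^2$-orthogonality of the frequency cones to glue them with a loss of $\sqrt{\log N}$ instead of $\log N$. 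Sharpness of the exponent $\tfrac12$ is supplied by Karagulyan's lower bound \eqref{karag}, already stated in the introduction, which even rules out an $O(1)$ weak-$(2,2)$ bound.

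The main obstacle I expect is the interface between the maximal truncations and the lacunary structure: controlling $\sup_{\eps>0}$ inside $\sup_{v\in\V}$ simultaneously. The clean way around it is to first establish, for a single direction, that the full maximal truncation $H_{v,\star}$ is pointwise controlled (via Cotlar's inequality) by $H_v$ composed with $M$ plus $M$ itself, so that the maximal truncations cost only bounded operators and the vector of operators $(H_{v_j,\star})_j$ is, up to such harmless factors, comparable to $(H_{v_j})_j$; then the entire analysis takes place at the level of the non-truncated directional multipliers, and the $\sqrt{\log N}$ arises purely combinatorially from the $\log N$ Rademacher--Menshov blocks. A secondary technical point is making the ``smooth off-diagonal multiplier $\Rightarrow$ dominated by $M$'' reduction quantitative with constants independent of $j,k,N$, which requires a routine but careful stationary-phase / kernel estimate on the pieces of $K$ seen through the sector cutoffs; I would state this as a lemma and defer the symbol calculus.
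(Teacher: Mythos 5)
The paper obtains $\sqrt{\log N}$ by a one--shot application of the Chang--Wilson--Wolff inequality (Proposition~\ref{Lacunary:CWW}) to the \emph{full} $N$--fold maximum, yielding $\sqrt{\log N}$ \emph{times a square function} $\big(\sum_{k\in\Z}|T_\V(S_kf)|^2\big)^{1/2}$ in the radial Littlewood--Paley annuli; the lacunary structure is then used only to prove that this square function is bounded on $L^p$ \emph{uniformly in $N$}, via Lemma~\ref{Lacunary-pointwise} (pointwise domination of $T_\V(S_kf)$ by a bi--parameter maximal function in the coordinates of $v_\infty$, applied to $S_kf_{\mathsf{ev}}$ and $S_kf_{\mathsf{odd}}$) and Lemma~\ref{Lacunary-conemult} (boundedness of the rough cone multiplier $\sum_j\eps_j G_j$). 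Your sketch mentions all the right names (CWW, Córdoba--Fefferman, Fefferman--Stein, lacunary square functions) but assigns them the wrong roles, and the central combinatorics you propose to produce the factor $\sqrt{\log N}$ does not close.

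Concretely, you claim that ``for the maximum over a single block, the standard CWW/Córdoba--Fefferman argument gives an $L^p$ bound uniform in $N$.'' That is false: CWW does not eliminate the logarithm, it converts a supremum over $K$ operators into $\sqrt{\log K}$ times a square function. Moreover a uniform bound for the block max would contradict the Karagulyan lower bound \eqref{karag}, which gives $\|H_{\V'}\|_{2\to 2,\infty}\gtrsim\sqrt{\log(\#\V')}$ for every lacunary $\V'$. Once that is granted, your blocking scheme falls apart: if the $L\sim\log N$ blocks are dyadic in the index, the $\ell$--th block has $2^\ell$ directions and its max already costs $\sqrt{\ell}$, so even an optimistic $\ell^2$--gluing over $\ell=1,\dots,L$ gives $\big(\sum_\ell\ell\big)^{1/2}\sim\log N$, not $\sqrt{\log N}$; a naive triangle--inequality sum gives $(\log N)^{3/2}$. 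You correctly observe mid--paragraph that the pointwise $\ell^2$ gluing ``fails in general'' but then appeal to ``$\ell^2$--orthogonality of the frequency cones'' without a mechanism; the introduction of the paper explicitly warns that Rademacher--Menshov gives $\log N$ on $L^2$ and that Menshov's counterexample forbids improving this to $\sqrt{\log N}$ by orthogonality alone. So there is a genuine gap: the step that is supposed to produce the exponent $\tfrac12$ is missing.

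There are two further issues worth flagging. First, your reduction to $H_\V$ is not justified: writing $T_v = m_+(D_v)\circ H_v + (\text{bounded})$ still leaves a direction--dependent smooth factor inside a supremum over $v$, which is itself a nontrivial maximal multiplier; the paper avoids this by working with $T_\V$ directly, inserting the Hörmander--Mikhlin multiplier into the kernel estimate of Lemma~\ref{Lacunary-pointwise}. Second, the assertion that off--diagonal pieces $H_{v_k}\Delta_j f$ ($j\neq k$) are ``handled by $M$ again'' with constants independent of $j,k,N$ is exactly where the anisotropy bites: after the cone cutoff the surviving kernel is a long thin bump, and the ordinary isotropic $M$ does not dominate $|T_{v_j}[S_kf_{\mathsf{ev}}]|$ uniformly. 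The paper replaces $M$ by the bi--parameter maximal function $M^{\mathsf{bi}}_{v_\infty}$, together with the even/odd splitting \eqref{Lacunary-trunc} that makes the sharp cone cutoff $G_j$ harmless; proving that $\sum_j\eps_jG_j$ is $L^p$ bounded (Lemma~\ref{Lacunary-conemult}, via a Rubio--de--Francia/Córdoba--Fefferman vector--valued inequality and a weighted Hilbert transform bound) is a real piece of work, not a ``standard lacunary square function of Littlewood--Paley type.''
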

% The fact that the bound of Theorem \ref{main-ub2} is sharp for all $p$ follows immediately from the observation that a  $(\log N)^\beta$ bound with $\beta<\frac 12$ on some $L^p$ could be interpolated with \eqref{main-ub2eq} to get a strong $L^2$ upper bound which is better than the lower bound in \eqref{karag}. It is interesting to observe how the boundedness of the related maximal function $M_\V$ on all $L^p$, $1<p<\infty$ allows to extend the range of logarithmic-type boundedness below $L^2$.
%
%A set of $N$ directions $\V$ is said to be of Vargas type if the longest lacunary sequence contained in $\V$ has $O(\log N)$ elements. For instance, equispaced sets of  directions and truncations of the discrete Cantor set  are of Vargas type.  In the case of the Hilbert transform, we are able to prove a sharp (apart for double log terms) weak $(2,2)$ bound when $\V$ is of Vargas type.
\begin{theorem}  \label{main-ub3}
Let $\V$, $ \#\V=N$, be a Vargas set with constant $Q$.  We have the following  bound  \begin{equation} \label{main-ub3eq}
\|H_\V \|_{2\to 2,\infty}  \lesssim_Q \sqrt{\log N} (\log \log N)^6.
\end{equation}
In particular, if $V$ is uniform or a Cantor set then the following essentially sharp bound holds
$$\|H_\V \|_{2\to 2,\infty}  \lesssim \sqrt{\log N} (\log \log N)^6.$$
\end{theorem}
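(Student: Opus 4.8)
The plan is to strip $H_\V$ of its harmless non-singular part, reduce it to a maximal partial-sum operator over Fourier projections onto a disjoint family of frequency cones, and then sum these over the $O(\log N)$ scales with square-root rather than linear dependence in the weak $L^2$ quasi-norm. This last gain, in the spirit of Katz's bound $\|M_\V\|_{2\to 2,\infty}\lesssim\sqrt{\log N}$, is what separates the Vargas case from the general estimate of Theorem~\ref{main-ub1}.

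Since $|H_{-v}f|=|H_{v}f|$, we may assume after splitting the circle into boundedly many arcs that $\V=\{v_1,\dots,v_N\}$ lies in a short arc and is angularly ordered. Writing $H_v=c(2P_v^{+}-I)$ with $P_v^{+}$ the Fourier projection onto the half-plane $\{\xi\cdot v>0\}$, the consecutive differences $P_{v_{j+1}}^{+}-P_{v_j}^{+}$ are, up to bounded overlap and a further splitting into finitely many pieces, Fourier projections $\Delta_j$ onto pairwise disjoint cones $\Gamma_j$ --- each a difference of two half-plane multipliers, hence $L^p$-bounded. Telescoping in $\nu$ and discarding $|f|$ together with the single Calder\'on--Zygmund operator $H_{v_1}f$, the theorem reduces to a weak $(2,2)$ bound for $\mathcal{C}_\V f(x)=\sup_{1\le\nu\le N}|\sum_{j<\nu}\Delta_j f(x)|$. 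By the dyadic decomposition of initial segments underlying the Rademacher--Menshov theorem, $\mathcal{C}_\V f\lesssim\sum_{l=0}^{O(\log N)}\mathcal{C}_l f$, where $\mathcal{C}_l f=\sup_{|I|=2^l}|\Delta_I f|$ is the maximal operator over the $\sim N2^{-l}$ projections $\Delta_I$ onto the pairwise disjoint cones $\Gamma_I=\bigcup_{j\in I}\Gamma_j$, so that $\|\mathcal{C}_l\|_{2\to 2}\le 1$ by Plancherel. The naive triangle inequality in $l$ only returns the Rademacher--Menshov bound $\log N$, and Menshov's counterexample shows this cannot be improved to $\sqrt{\log N}$ by any soft argument; the improvement must genuinely use that the $\Gamma_I$ are frequency cones and that $\V$ is Vargas.

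To sum the scales, split $\{0,\dots,O(\log N)\}$ into $O(\log\log N)$ runs $\mathcal{G}_k=\{l:2^k\le l<2^{k+1}\}$ and bound $\sum_{l\in\mathcal{G}_k}\mathcal{C}_l$ in weak $L^2$ by $2^{k/2}(\log\log N)^{O(1)}$. On one run this interpolates three inputs: the disjoint-cone estimate $\|\mathcal{C}_l\|_{2\to 2}\le 1$; the exponential-type control at $L^\infty$ provided at each single scale by the proof of Theorem~\ref{main-ub1} (in the form of \eqref{main-ub1ep}), which after a Calder\'on--Zygmund decomposition, or by restricted weak-type testing on indicators, supplies the weak $L^2$ input; and the $L^p$ almost-orthogonality principle for Fourier restrictions to cones, Theorem~\ref{sect2-Lp-ortho}, fed with the sharp lacunary bound of Theorem~\ref{main-ub2}, which holds on the whole range $1<p<\infty$, in particular below $2$. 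The Vargas constant $Q$ enters precisely here: it ensures that the time-frequency trees generated by $\V$ within a run can be assembled into $O_Q(1)$ Chang--Fefferman forests of controlled overlap, so that the product-BMO machinery behind Theorem~\ref{sect2-Lp-ortho} can be run. Summing the geometric series $\sum_{k=0}^{O(\log\log N)}2^{k/2}\lesssim\sqrt{\log N}$, and paying a further fixed power of $\log\log N$ for the weak-$L^2$ quasi-triangle inequality over the $O(\log\log N)$ runs, gives $\|\mathcal{C}_\V\|_{2\to 2,\infty}\lesssim_Q\sqrt{\log N}(\log\log N)^{6}$, hence the theorem. The final assertion is then immediate, since uniform sets ($Q=1$) and $N$-truncations of Cantor sets ($Q=4$) are Vargas, while \eqref{karag} shows that the factor $\sqrt{\log N}$ cannot be removed.

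The \emph{main obstacle} is precisely this weak-$L^2$ summation over the $O(\log N)$ scales: forcing honest $\ell^2$- rather than $\ell^1$-behaviour on the non-positive singular operators $\mathcal{C}_l$ defeats Rademacher--Menshov only because of the conical geometry together with the Vargas hypothesis, and making it quantitative requires running the product-BMO time-frequency analysis --- tree selection, forest organization, interpolation constants --- carefully enough that the total loss is no worse than $(\log\log N)^6$. By contrast, the reduction to $\mathcal{C}_\V$, the dyadic telescoping, and the single-scale estimate are comparatively routine given Theorems~\ref{main-ub1}--\ref{main-ub2} and~\ref{sect2-Lp-ortho}.
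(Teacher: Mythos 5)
Your high-level narrative correctly identifies the two forces at work --- a $\log N$ count on contributing pieces coming from the Vargas hypothesis, interpolated against a Bessel/$L^2$ input to produce $\sqrt{\log N}$ --- and you are right that the square-root gain cannot come from a soft Rademacher--Menshov argument. But the concrete mechanism you propose has a genuine gap.

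The crucial step in your plan is the bound $\big\|\sum_{l\in\mathcal{G}_k}\mathcal{C}_l\big\|_{2\to 2,\infty}\lesssim 2^{k/2}(\log\log N)^{O(1)}$ for each run $\mathcal{G}_k$ of $2^k$ consecutive dyadic scales, which you then sum geometrically. This is exactly the assertion that the $2^k$ maximal operators $\mathcal{C}_l$ exhibit genuine $\ell^2$ summability in weak $L^2$. You claim this follows by interpolating the Plancherel bound $\|\mathcal{C}_l\|_{2\to 2}\le 1$, the exponential endpoint \eqref{main-ub1ep}, and Theorem~\ref{sect2-Lp-ortho}; but all three inputs control each $\mathcal{C}_l$ individually, each $\mathcal{C}_l$ already has operator norm $1$, and no interpolation of single-operator bounds produces a square-root gain in the \emph{number of summands} --- it can only reproduce the triangle-inequality loss $2^k$. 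The Menshov counterexample you yourself quote shows this cannot be fixed abstractly; an additional combinatorial/geometric mechanism is needed and you do not supply one. Moreover, the Vargas constant does not enter Theorem~\ref{sect2-Lp-ortho} at all (that estimate holds for arbitrary disjoint intervals), so the sentence ``the Vargas constant $Q$ enters precisely here'' misplaces the hypothesis.

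The paper's route is structurally different. After passing to the time-frequency model $\Hh_{\S}^\star$, the tile set $\S$ is decomposed into forests $\S_\sigma$ indexed by a dyadic \emph{size} parameter $\sigma$, each a union of lacunary or conical/overlapping trees $\F_\sigma$ satisfying two key estimates: a Bessel bound $\sum_{\t\in\F_\sigma}|\sh(\t)|\lesssim\sigma^{-2}\|f\|_2^2$, and a Vargas counting bound $\sup_{v\in\V}\sum_{\t\in\F_\sigma}\cic{1}_{\mathsf{crown}(\t)}(v)\le C\log N$. The Vargas hypothesis is used precisely in the second estimate --- trees whose crowns share a direction generate a lacunary sequence in $\V$ --- not in the product-BMO machinery. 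One then excises three exceptional sets (large-size shadows, single-tree John--Nirenberg tails, and an overcounting set), and \emph{pointwise} outside them the number of contributing trees is $\lesssim\min\{\log N,\sigma^{-2}|\log\sigma|^2\}$ while each tree contributes $\lesssim\sigma|\log\sigma|^2(\log\log N)^2$; summing over $\sigma$, the two branches of the minimum cross at $\sigma^{-1}\sim\sqrt{\log N}$ and each contributes $\sqrt{\log N}(\log\log N)^6$. The square-root improvement is therefore realized by balancing the two counting bounds over the size parameter $\sigma$, not by a weak-$L^2$ summation across Rademacher--Menshov scales. Your proposal does not supply a substitute for this size-based forest decomposition, so as written the argument does not close.
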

\begin{remark}
The (essential) sharpness of Theorem \ref{main-ub3} follows immediately via comparison with \eqref{karag}. It is not clear whether the $\log \log N$ term can be eliminated.

Note also that the bound in Theorem \ref{main-ub3} looses its strength as $Q$ gets large. 
\end{remark}
Interestingly, Theorem \ref{main-ub2} and Theorem \ref{main-ub3} show that uniformly distributed sets and lacunary sets of $N$ directions have the same quantitative behavior on weak $L^2$ (this is not the case for strong $L^2$).

The  methods of proof  of Theorem \ref{main-ub3} can also be easily modified to prove the estimate
$$
\|H_\V f \|_{p}  \lesssim_{p} (\log N)^{\frac{1}{p'}} (\log \log N)^{C_p}, \qquad 2<p<\infty,
$$
for each  Vargas set  of cardinality $N$. We strongly believe these methods are sharp, and thus that the exponent of $\log N$ is the correct one for each $p>2$.
We are  thus led to the following conjecture, which, as our results show, holds for lacunary sets and (at least in the case of the Hilbert transform) Vargas sets. \begin{cj}\label{general_conj} Let $T_\V$  be defined as in \eqref{sect1-mdm} and $\#\V=N$.   We have the bounds $$
\|T_\V \|_{2\to 2,\infty}  \lesssim \sqrt{\log N}(\log\log N)^C,\quad\;  \|T_\V \|_{p\to p} \lesssim_p(\log N)^{\frac{1}{p'}}(\log\log N)^{C_p},  \quad 2<p<\infty.
$$
\end{cj}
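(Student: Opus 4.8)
The plan has three layers: reduce to a model maximal operator, prove a \emph{scale-free} single-scale orthogonality estimate, and then aggregate the scales by a weak-type argument that uses the Vargas structure. Since $H_\V$ is the maximal multiplier for $\mathrm{sign}(\xi)$, the angular cone decomposition is \emph{exact}, and I would use it as in \cite{CD10}: after a Littlewood--Paley reduction to a single frequency annulus and listing $\V=\{v_1,\dots,v_N\}$ by angle, one has the pointwise bound $|H_\V f(x)|\lesssim \sup_{1\le\nu\le N}\big|\sum_{j=1}^{\nu} f_j(x)\big|$ modulo error terms dominated by $Mf$, where $f_j$ is the Fourier restriction of $f$ to the $j$-th angular cone. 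A Rademacher--Menshov (Christ--Kiselev) dyadic splitting of $\{1,\dots,N\}$ then bounds the maximal partial sum by $\sum_{k=0}^{\lceil\log_2 N\rceil} S_k f$, where $S_k f=\big(\sum_{I\in\D_k}\big|\sum_{j\in I}f_j\big|^2\big)^{1/2}$ and $\D_k$ is the partition of $\{1,\dots,N\}$ into $2^k$ consecutive blocks. Because the cones attached to distinct blocks of $\D_k$ are pairwise frequency-disjoint, Plancherel gives the scale-free bound $\|S_k f\|_2\lesssim\|f\|_2$; summing the $1+\log_2 N$ scales by the triangle inequality only reproduces the estimate $\log N$, which by \eqref{karag} cannot be improved in strong $L^2$, so the heart of the matter is to aggregate the scales losing merely a factor $\sqrt{\log N}$.

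For the aggregation I would group the scales into $O(\log\log N)$ blocks of consecutive scales and treat each block by a weak-type argument: within such a block, a stopping-time / multi-frequency Calderón--Zygmund decomposition in the frequency martingale of the cone decomposition (in the spirit of Chang--Wilson--Wolff) reduces matters to an $L^2$ square-function estimate on a good set -- which is where the $\sqrt{\log N}$ from the number of scales is produced -- together with an exceptional set whose measure is controlled uniformly through the exponential/Orlicz endpoint \eqref{main-ub1ep} of Theorem \ref{main-ub1}. The Vargas hypothesis $\max\{\#\V':\V'\text{ lacunary}\}=Q\log N$ enters precisely here: it forces the angular cones to organize into a tree whose depth is controlled by $Q$, and this is exactly what makes the relevant Kakeya-type overlap estimates for the associated rectangle families $\RR$ (equivalently, the needed $L^2$ bounds for $\M_\RR$, and the almost $L^p$-orthogonality of Theorem \ref{sect2-Lp-ortho}) hold with constants depending on $Q$ but uniform in $N$; this is why uniformly distributed and Cantor directions are no harder than a general Vargas set. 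Adding the $O(\log\log N)$ blocks of scales, with the (constant) quasi-triangle loss in $L^{2,\infty}$ and the per-block $\mathrm{polyloglog}$ slack, yields $\|H_\V\|_{2\to2,\infty}\lesssim_Q \sqrt{\log N}\,(\log\log N)^{6}$.

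The main obstacle is interfacing the time-frequency / product-BMO technology, which is naturally an $L^2$ and $L^p$ ($p>2$) tool, with a genuinely \emph{weak}-$(2,2)$ conclusion: the stopping time must be calibrated so that the good part lands in $L^2$ with exactly the $\sqrt{\log N}$ constant while the exceptional set is estimated via \eqref{main-ub1ep} without reintroducing powers of $\log N$. A second delicate point is establishing the Kakeya/overlap input uniformly in $Q$ -- this is the mechanism behind the degradation of the bound as $Q\to\infty$ -- and I expect this geometric estimate to be the technical core. The powers of $\log\log N$ appear to be artefacts of the block-by-block scheme; the same argument run in $L^p$ for $p>2$ should produce $(\log N)^{1/p'}(\log\log N)^{C_p}$, in agreement with Conjecture \ref{general_conj}, but removing the $\log\log N$ entirely (so as to match \eqref{karag} exactly) seems beyond the reach of this method.
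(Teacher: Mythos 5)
This is a conjecture, and the paper does not prove it: it is explicitly stated as open, and the Final Remarks section identifies the weak-$(2,2)$ bound for \emph{arbitrary} $\V$ as ``the main unresolved issue in the paper.'' Your proposal does not close this gap, because the argument you describe hinges on the Vargas hypothesis $\max\{\#\V':\V'\text{ lacunary}\}=Q\log N$ --- you write that it ``enters precisely here'' to organize the cones into a tree of controlled depth, and your final estimate carries a $\lesssim_Q$ which is not present in the conjecture. The conjecture asserts these bounds for every set of $N$ directions with no structural assumption, so a proof that uses the Vargas constant as an input (and blows up as $Q\to\infty$) establishes only Theorem \ref{main-ub3}, not Conjecture \ref{general_conj}. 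You have, in effect, re-derived the statement of Theorem \ref{main-ub3} and then asserted the conjecture as a corollary, which is a logical non sequitur. (The paper does make the valid reduction that the second conjectured inequality follows from the first by interpolating against Theorem \ref{main-ub1}; the hard part, the first inequality for general $\V$, is exactly what remains untouched.)

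Even read purely as a sketch of Theorem \ref{main-ub3}, the scale-aggregation step is the crux and is not substantiated. You propose to write the maximal partial sum as $\sum_{k\le \log_2 N} S_k f$ via a Rademacher--Menshov dyadic splitting, and then to recover $\sqrt{\log N}$ by ``grouping the scales into $O(\log\log N)$ blocks'' together with a Chang--Wilson--Wolff-type stopping time. But the paper points out (via Menshov's counterexample, quoted right after \eqref{introRM}) that abstract Rademacher--Menshov orthogonality cannot give a weak $L^2$ bound better than the strong $L^2$ bound $\log N$; any improvement must exploit the geometric structure of the cone restrictions, not merely their mutual orthogonality. Your blocks-plus-stopping-time mechanism does not indicate how this obstruction is bypassed, and in particular it does not explain where the square-root saving actually comes from. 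The paper's proof of Theorem \ref{main-ub3} is of a genuinely different nature: a bi-parameter phase-plane discretization (Section 5), a product John--Nirenberg inequality and product Chang--Wilson--Wolff estimate (Section 4), a size lemma and tree decomposition into lacunary, overlapping and conical trees with counting estimates \eqref{bessel-prel}--\eqref{count-prel} keyed to the Vargas constant, and a two-component exceptional set construction (Section 7). None of these ingredients appear in your outline, and the appeal to \eqref{main-ub1ep} to control exceptional sets does not substitute for them.
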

In light of our Theorem \ref{main-ub1}, the second conjectured inequality will follow if the first inequality is established.

\subsection{The methods and the plan of the paper} Theorem \ref{main-ub1} and \ref{main-ub2} are proved by classical harmonic analysis methods.
  The main tool behind the proof of Theorem \ref{main-ub1}, which is given in Section 2,  is an exponential-type good-lambda inequality for one-dimensional singular integrals by Hunt \cite{HUNT}, which allows to estimate the contribution of the $N$ singular integrals $T_v$ with a loss which is only logarithmic.

Theorem \ref{main-ub2} is proved in Section 3. Via an application of the exponential square integrability inequality by Chang, Wilson, and Wolff \cite{CWW}, we lose a $\sqrt{\log N}$ and reduce $T_\V$ to a square function  in the frequency Littlewood-Paley annuli, which is shown to be uniformly bounded in $N$ when $\V$ is lacunary.

On the other hand, Theorem \ref{main-ub3} is proved by means of time-frequency analysis in the phase plane.  We use  a dicretization for $H_\V$ very similar to the one introduced by Lacey and Li in their pioneering work \cite{LL1} and \cite{LL2}. The resulting model sum closely resembles the one introduced in \cite{LT} for Carleson's operator (see Sections 5 and 6). However, the \emph{tiles} appearing in the model sum for $H_{\V}$ are a bi-parameter family, and are allowed to be oriented along each direction in $\V$. These model sums are chopped into \emph{single tree} operators. Our analysis is somewhat similar to the one from \cite{DLTT} which contains a new proof of the boundedness of the Carleson operator. The approach from \cite{DLTT} is to show that for each point $x$, the contribution to the Carleson operator comes essentially from one tree. For Vargas sets, there will be at most $\log N$ trees contributing to $H_{\V}$. This upper bound is further lowered to $\sqrt{\log N}$ by interpolating with a Bessel inequality.

One of the things that distinguishes our approach  from the ones in \cite{BaTh}, \cite{CD10}, \cite{LL1} and \cite{LL2} is the fact that our trees will be of product nature (i.e. bi-parameter). The bulk of the argument is given in Section 7.
 \section{Proof of Theorem \ref{main-ub1}}
Throughout this section, we use the notations
$$
\mu_Q(A) = \int_{Q} \cic{1}_A(x) \; \frac{\d x}{|Q |}, \qquad
\|f\|_{L^p(Q)} := \left(  \int_{Q} |f(x)|^p \,\d\mu_Q(x)  \right)^{\frac1p}=\left(  \int_{Q} |f(x)|^p \,\frac{\d x}{|Q |}\right)^{\frac1p}$$
for $A \subset \R^2$ and $1\leq p< \infty.$
Before we enter the proof, we recall two results we   cited in the introduction, which will be used below:  the sharp bound
\begin{equation}
\label{HE2-L2}
\|T_\V f \|_{2\to 2} \lesssim \log N
\end{equation}
which has been proved in \cite{CD10}, and
\begin{equation}
\label{HE2-MF}
\|M_\V f\|_{p\to p} \leq C (\log N)^{\frac1p} \|f\|_p, \qquad 2<p<\infty
\end{equation}
which has first been established in \cite{KATZ1}.
\vskip2mm \noindent

\subsection{Maximal truncations of CZ kernels and a good-$\lambda$ inequality}
  To prove Theorem \ref{main-ub1}, we   need   two propositions.   The first is a consequence of the sharp weighted bound for maximal truncations of singular integrals in terms of the weight characteristic. The second  is a reformulation of a classical   good-lambda inequality by Hunt \cite{HUNT}.
\begin{proposition} \label{HE2-Tstar} We have the bound $
\|T_\star f\|_{p} \leq C p  \|f\|_p, $  for each $ 2 < p <\infty.
$
\end{proposition}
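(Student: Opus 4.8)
The plan is to bootstrap the unweighted estimate out of the \emph{sharp weighted} bound for maximal truncations of Calder\'on--Zygmund operators, by running a Rubio de Francia iteration with explicit control of all constants; no further harmonic-analysis input is needed. Concretely, I would take as a black box the maximal-truncation version of the $A_2$ theorem: there is a constant $C_0$, depending only on the multiplier $m$, such that
\begin{equation*}
\int_{\R^2}|T_\star g|^2\,w\,\d x \;\leq\; C_0^2\,[w]_{A_2}^2\int_{\R^2}|g|^2\,w\,\d x \qquad\text{for every } w\in A_2,
\end{equation*}
where $[w]_{A_2}$ is the usual weight characteristic. It suffices to prove Proposition~\ref{HE2-Tstar} for Schwartz $f$ (the general case following by density): for such $f$ one has $T_\star f\in L^p$ a priori, e.g.\ from the Cotlar-type pointwise bound $T_\star f\lesssim M(Tf)+Mf$ together with $Tf\in L^p$. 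So fix $2<p<\infty$.

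For the bootstrap, set $s:=(p/2)'\in(1,\infty)$, so that $s\downarrow 1$ as $p\uparrow\infty$ and $s'=p/2$. By $L^{p/2}$-duality,
\begin{equation*}
\|T_\star f\|_p^2 \;=\; \big\||T_\star f|^2\big\|_{p/2} \;=\; \sup\Big\{\int_{\R^2}|T_\star f|^2\,g\,\d x \st 0\leq g,\ \|g\|_s\leq 1\Big\}.
\end{equation*}
Given an admissible $g$, run the Rubio de Francia algorithm relative to the Hardy--Littlewood maximal operator $M$ on $L^s(\R^2)$: the weight $Rg:=\sum_{k\geq0}\big(2\|M\|_{L^s\to L^s}\big)^{-k}M^kg$ satisfies $g\leq Rg$ a.e., $\|Rg\|_s\leq 2\|g\|_s$, and $M(Rg)\leq 2\|M\|_{L^s\to L^s}\,Rg$, hence $[Rg]_{A_2}\leq[Rg]_{A_1}\leq 2\|M\|_{L^s\to L^s}$. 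Since $s=(p/2)'$, the sharp bound $\|M\|_{L^s\to L^s}\lesssim s'=p/2$ gives $[Rg]_{A_2}\lesssim p$. Plugging $w=Rg$ into the weighted inequality and using monotonicity together with H\"older,
\begin{equation*}
\int_{\R^2}|T_\star f|^2\,g\,\d x \;\leq\; \int_{\R^2}|T_\star f|^2\,Rg\,\d x \;\leq\; C_0^2\,[Rg]_{A_2}^2\int_{\R^2}|f|^2\,Rg\,\d x \;\lesssim\; p^2\,\big\||f|^2\big\|_{p/2}\,\|Rg\|_s \;\lesssim\; p^2\,\|f\|_p^2.
\end{equation*}
Taking the supremum over $g$ yields $\|T_\star f\|_p^2\lesssim p^2\|f\|_p^2$, which is exactly the asserted bound $\|T_\star f\|_p\leq Cp\|f\|_p$.

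The argument is short, and the only point where anything delicate happens is the estimate $[Rg]_{A_2}\lesssim p$: it rests on the precise blow-up rate $\|M\|_{L^s\to L^s}\sim(s-1)^{-1}\sim s'$ of the maximal operator as $s\downarrow 1$, used in exactly the regime $s=(p/2)'\downarrow 1$ — this is what turns the (otherwise $N$-free, $p$-free) weighted $L^2$ inequality into a linear-in-$p$ unweighted bound. I would also take a moment to justify the a priori finiteness of $\|T_\star f\|_p$ for Schwartz $f$ before invoking $L^{p/2}$-duality, but this is routine. A more classical alternative, avoiding weights entirely, is to combine Cotlar's inequality $T_\star f\lesssim M(Tf)+Mf$ with the standard fact that a Calder\'on--Zygmund operator obeys $\|T\|_{L^p\to L^p}\lesssim p$ for $p\geq 2$ (Marcinkiewicz interpolation of the weak $(1,1)$ and $L^2$ bounds for $1<q<2$, followed by duality) and with $\|M\|_{L^p\to L^p}\lesssim1$ for $p\geq 2$; the weighted route is preferable here only because it packages the whole estimate into one citable inequality.
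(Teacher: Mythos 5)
Your proof is correct and follows essentially the same route as the paper: the sharp $A_2$ bound for maximal truncations, $L^{p/2}$-duality, a Rubio de Francia majorant built from the Hardy--Littlewood maximal function, and the sharp blow-up $\|M\|_{L^s\to L^s}\lesssim s'$ at $s=(p/2)'$ to control $[Rg]_{A_2}\lesssim p$. The only (immaterial) slip is that you work on $\R^2$, whereas $T_\star$ here is the one-dimensional maximal truncation acting on $\R$.
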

\begin{proposition} \label{HE2-Hunt}
There exist positive absolute constants $c_0,c_1$ such that  for all $f \in \C^\infty_0(\R)$, all $\lambda>0$ and all $\gamma>4$,
\begin{equation}  \label{huntmineeq}\big|\{x \in \R: |Tf(x)|>\gamma\lambda , M  f(x)<  c_0 \lambda\} \big|   \leq  C\e^{-c_1 \gamma} \big|\{x \in \R: |T_\star f(x)|> \lambda  \} \big|.
\end{equation}
\end{proposition}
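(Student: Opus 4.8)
The plan is to run the classical good-$\lambda$ machinery for one-dimensional singular integrals. The only point that goes beyond the standard Coifman--Fefferman inequality is that we want \emph{exponential}, not merely linear, decay in $\gamma$; this is exactly Hunt's refinement, and it is where Proposition \ref{HE2-Tstar} (the linear-in-$p$ bound for $T_\star$) will be brought in.

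Since $|Tf|\le T_\star f$ pointwise, it suffices to prove the stronger inequality with $T_\star f$ replacing $Tf$ on the left. Fix $\lambda$ and $\gamma$ and perform a Whitney decomposition $\{T_\star f>\lambda\}=\bigsqcup_j Q_j$ into disjoint intervals with $\mathrm{diam}(Q_j)\approx\dist(Q_j,\{T_\star f\le\lambda\})$; for each $j$ pick $\bar x_j$ with $T_\star f(\bar x_j)\le\lambda$ and $\dist(\bar x_j,Q_j)\lesssim|Q_j|$. It then suffices to show $|\{x\in Q_j\st T_\star f(x)>\gamma\lambda,\ Mf(x)<c_0\lambda\}|\le C\e^{-c_1\gamma}|Q_j|$ for each $j$, the trivial bound $\le|Q_j|$ disposing of the range $\gamma\lesssim 1$. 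Writing $f=f\1_{2Q_j}+f\1_{(2Q_j)^c}=:g+h$, a standard comparison of truncations at $x\in Q_j$ and at $\bar x_j$ (using only the size and smoothness of $K$) gives $T_\star h(x)\lesssim\lambda+Mf(x)$, hence $T_\star h(x)\lesssim\lambda$ on the set at hand; so matters reduce to $|\{x\in Q_j\st T_\star g(x)>\tfrac{\gamma}{2}\lambda,\ Mg(x)<c_0\lambda\}|\lesssim\e^{-c_1\gamma}|Q_j|$, where $\mathbb E_{2Q_j}|g|<c_0\lambda$ as soon as this set is nonempty.

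For the local piece $g$ I would apply a Calder\'on--Zygmund decomposition at height $\lambda$, writing $g=g^{\mathrm b}+g^{\mathrm g}$ with $\|g^{\mathrm g}\|_\infty\lesssim\lambda$, $\supp g^{\mathrm g}\subset 2Q_j$, and all bad intervals contained in $\{Mg>\lambda\}\subset\{Mf>\lambda\}$; after harmlessly shrinking $c_0$ the dilated bad intervals are disjoint from $\{Mf<c_0\lambda\}$ as well. On the bounded part, Proposition \ref{HE2-Tstar} gives $\|T_\star g^{\mathrm g}\|_{L^p(\R)}\le Cp\|g^{\mathrm g}\|_{L^p(\R)}\le Cp\|g^{\mathrm g}\|_\infty|2Q_j|^{1/p}\lesssim p\lambda|Q_j|^{1/p}$ for all $p\ge 2$; Chebyshev followed by the optimal choice $p\approx\gamma$ gives $|\{x\st T_\star g^{\mathrm g}(x)>\tfrac{\gamma}{4}\lambda\}|\lesssim\e^{-c\gamma}|Q_j|$ --- the desired exponential gain.

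What remains is $T_\star g^{\mathrm b}$, which on $\{Mf<c_0\lambda\}$ (hence off all dilated bad intervals) is to be estimated pointwise through the cancellation of the mean values of the atoms. Extracting genuinely exponential --- rather than merely $O(1/\gamma)$ --- decay out of this term is the delicate point, and I expect it to be the main obstacle; it is precisely the content of Hunt's argument \cite{HUNT}, which I would invoke here. The remaining ingredients --- the far-field estimate for $h$, the summation over $j$, and the passage from the $T_\star$-formulation back to the stated one --- are routine. Finally, moving from Hunt's original statement (phrased for the maximal conjugate function on the torus) to the present one for an arbitrary H\"ormander--Mikhlin multiplier on $\R$ costs nothing, since the proof uses only Calder\'on--Zygmund kernel bounds together with the $L^p$ theory already recorded in Proposition \ref{HE2-Tstar}.
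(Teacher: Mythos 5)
Your outline --- Whitney decomposition of $\{T_\star f>\lambda\}$, near/far split on each Whitney interval $Q_j$, the standard comparison of truncations giving $T_\star h\lesssim\lambda$ on $\{Mf<c_0\lambda\}\cap Q_j$, and an appeal to Hunt for the local piece --- matches the structure of the paper's proof, and your far-field step is exactly the estimate the paper uses (cf.\ \eqref{hm3}) to show that the $f_2$ contribution in \eqref{hm1} vanishes for $\gamma>4$. Where the argument does not close is the treatment of the local function $g=f\cic{1}_{2Q_j}$. The paper simply rescales Proposition~2 of \cite{HUNT} --- which is already a \emph{local} good-$\lambda$ inequality with exponential decay in $\gamma$ --- onto the Whitney interval, and is done in one line. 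You instead insert a Calder\'on--Zygmund decomposition of $g$ at height $\lambda$, handle the bounded part via Proposition~\ref{HE2-Tstar} and a $p\sim\gamma$ Chebyshev argument (this part is correct and does give the exponential gain), and then assert that the bad part ``is precisely the content of Hunt's argument, which I would invoke here.''

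That final appeal is not a well-formed invocation, and this is a genuine gap. Hunt's cited result is a good-$\lambda$ inequality for the \emph{whole} local function on an interval; it is not a bound for the bad part of a single-height CZ decomposition. The mechanism inside Hunt's proof that converts the cancellation of the atoms into exponential rather than $O(1/\gamma)$ decay is an iterated CZ decomposition at dyadic heights, with the good and bad contributions interleaved across scales; it does not factor through a standalone ``bad-part lemma'' that one could cite for $T_\star g^{\mathrm b}$ in isolation. So you must either reproduce Hunt's iteration --- which you explicitly do not --- or apply Hunt's proposition to $g$ as a whole, in which case the CZ decomposition and the Proposition~\ref{HE2-Tstar} computation become redundant. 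The fix is what the paper does: omit the CZ decomposition entirely and cite the rescaled Proposition~2 of \cite{HUNT} directly for the local good-$\lambda$ estimate on $g$.
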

 \begin{proof}[Proof of Proposition \ref{HE2-Tstar}] As in \cite{PF}, we follow Rubio de Francia's method. The recent article \cite{WEIGHT1} contains the sharp bound
\begin{equation}
\label{HE-2} \|T_\star f \|_{L^2(w)} \leq C [w]_{A_2}\|f\|_{L^2(w)}, \end{equation}
where $[w]_{A_2}$ denotes the $A_2$ characteristic of the weight $w$. Let $  q=(p/2)' >1$. Choose $v \in L^{q}(\R),$   such that $\|v\|_q=1$ and
$$
\|T_\star f\|_{p}^2 = \int_R |T_\star f(x)|^2 v(x) \d x.
$$
By interpolating the trivial $L^\infty$ bound with the endpoint $
\|Mf\|_{1} \leq C \|f\|_{L\log L}
$, one obtains the (sharp) bound
 $\|M \|_{q\to q} \leq C p$. Therefore, defining
$$
w = \sum_{k=0}^{\infty} \frac{1}{(2 \|M \|_{q\to q} )^k}\underbrace{M \circ \cdots \circ M}_{ k\, \mathrm{times}}  v
$$
we have
$$
v(x) \leq w(x) \;\; \mathrm{a.e.}, \quad \|w\|_q \leq 2,  \quad M  w(x) \leq  C p w(x) \;\; \mathrm{a.e.}, \quad [w]_{A_2} \leq Cp.
$$
We can conclude the proof with the chain of inequalities
\begin{align*}
\|T_\star f\|_{p}^2 = \int_R |T_\star f(x)|^2 v(x) \d x & \leq \int_R |T_\star f(x)|^2 w(x) \d x \\ & \leq C ^2 [w]_{A_2}^2 \int_R |  f(x)|^2 w(x) \d x \\ &\leq  C ^2 p^2\|f\|^2_{p},
\end{align*}
where we used \eqref{HE-2} in going from the first to the second line.
\end{proof}
\begin{proof}[Proof of Proposition \ref{HE2-Hunt}]
 Let us call $G$ the set on the right-hand side of \eqref{huntmineeq}. We can write
$G$ as the countable union of intervals $J$ with disjoint interiors such that $3J \not \subset G$. Therefore, \eqref{huntmineeq} will follow by summing up the estimate
\begin{equation}  \label{huntmineeqloc}\big|\{x \in J: |Tf(x)|>\gamma\lambda , M  f(x)<  c_0 \lambda\} \big|   \leq  C\e^{-c_1 \gamma}|J|.
\end{equation}
Let us prove \eqref{huntmineeqloc}. Choose a point $\bar x \in 3J \cap G^c$ and let $\bar J$ be the interval of length $6|J|$ centered at $\bar x$. Set
$$
f= f_1 + f_2, \qquad f_1 = f \cic{1}_{\bar J}, \, f_2 = f \cic{1}_{(\bar J)^c}.
$$
It is clear that
\begin{equation} \label{hm1} \eqref{huntmineeqloc}_{\mathsf{LHS}} \leq \big|\{x \in \bar J: |Tf_1(x)|>{\textstyle \frac{\gamma}{2}}\lambda , M  f(x)<  c_0 \lambda\} \big|   + \big|\{x \in  J: |Tf_2(x)|>{\textstyle \frac{\gamma}{2}} \lambda\}\big|
\end{equation}
Rescaling  Proposition 2 of \cite{HUNT}  on $\bar J$, we get that
\begin{equation} \label{hm2}  \big|\{x \in \bar J: |Tf_1(x)|>{\textstyle \frac{\gamma}{2}}\lambda , M  f(x)<  c_0 \lambda\} \big|   \leq A \e^{-\frac{a_0}{2c_0}\gamma} |\bar J| \leq 6A \e^{-\frac{a_0}{2c_0}\gamma} |J|.
\end{equation}
for some positive absolute constants $A,a_0$. If we prove that the set in the second term of $\eqref{hm1}_{\mathsf{RHS}}$ is empty for some choice of $c_0$, \eqref{huntmineeqloc} will follow from \eqref{hm2}, with $C=6A, c_1= \frac{a_0}{2c_0} $.
This is done as follows. It is known that (see \cite{STEIN1}, p.\ 208, eq.\ (35))
\begin{equation} \label{hm3}
|Tf_2(\bar x) - Tf_2(x)| \leq A_1 \inf_{J} M f \leq A_1 c_0 \lambda, \qquad \forall x \in J,
\end{equation}
for some absolute constant $A_1>0$. Here the second inequality follows because we can work under the assumption that the set in $\eqref{huntmineeqloc}_{\mathsf{LHS}}$ is nonempty. Also note that
$$
Tf_2(\bar x) = T_\eps f(\bar x), \qquad \eps = 6|J|
$$
and by our choice of $\bar x \notin G$, $|T_\eps f(\bar x)| \leq T_\star f(\bar x) \leq \lambda$. Hence, if we set $c_0=\frac{1}{A_1}, $ we get
$$
|Tf_2( x)| \leq |Tf_2(\bar x) - Tf_2(x)| + T_\star f(\bar x) \leq 2\lambda \leq \frac{\gamma}{2} \lambda , \qquad \forall x \in J,
$$
and this completes the proof.
\end{proof}

\subsection{Proof of Theorem \ref{main-ub1}}
By a limiting argument, we can assume that $f$ is supported on  a square $Q \subset \R^2$  of (finite) sidelength $\ell(Q)$. We first treat the part of  $T_\V f $ outside $2Q$.
Let  $x \in 2^{k+1}Q\backslash 2^kQ$, $k\geq 1$. We have, using the decay properties of the kernel $K$,
\begin{equation}
\label{HE2-PF8}
|T_vf(x)| \leq  \int_{\delta_1(x)}^{\delta_2(x)}  |f(x+tv)| |K(t)|\,\d t \leq \frac{C}{2^k \ell(Q)} \int_{-2^k\ell (Q)}^{2^k\ell (Q)} |f(x+tv)|  \,\d t \leq CM_v |f|(x).
\end{equation}
 Here we assumed that the line $\{x+tv:t \in \R\}$ intersects $Q$ at all $t$ between ${\delta_1(x)}$ and ${\delta_2(x)}$, with ${\delta_1(x),\delta_2(x)}\sim 2^k\ell(Q)$  (otherwise \eqref{HE2-PF8}  holds trivially).
Therefore
\begin{equation}
\label{HE2-PF9}
\|(T_\V f) \cic{1}_{\R^2\backslash 2Q}\|_{p} \leq C \|M_\V |f| \|_{p}\leq C(\log N)^{\frac1p}\|f\|_p.
\end{equation}
We now show that
\begin{equation} \label{HE2-expprop}
\|T_\V f\|_{L^p(2Q)} \leq C p  \log N  \|f\|_{L^p(Q)}, \qquad \forall 2<p<\infty;
\end{equation}
this  and \eqref{HE2-PF9} immediately imply  \eqref{main-ub1eq}. The weak-type endpoint result \eqref{main-ub1ep} is then obtained as an easy consequence of \eqref{HE2-expprop} (proceeding as in, for instance, \cite[Sect.\ IV.1.3]{STEIN1}).

In short, to prove \eqref{HE2-expprop}, we apply the exponential good-lambda inequality of Proposition \ref{HE2-Hunt} for each $T_v$, which acts separately on each fiber of $f$ along $v$, to excise $N$ different exceptional sets. Due to the exponential decay in the good-lambda inequality, the final loss is only logarithmic.
\begin{proof}[Proof of  \eqref{HE2-expprop}]  The range of exponents is open, so by Marcienkiewicz interpolation it suffices to prove the weak-type bound
\begin{equation} \label{HE2-PF0}
 \mu_{2Q}\big(\big\{x \in 2Q: T_\V f (x) > \lambda\log N  \big\}\big) \leq
\frac{C^p p^{p}}{\lambda^p} \|f\|_{L^p(Q)}^p, \qquad \lambda>0, \, 2<p<\infty.
\end{equation}
 For $\lambda>0$,  split ($c_0$ below is the absolute constant appearing in Proposition \ref{HE2-Hunt})
$$f= f_1+ f_2, \qquad f_1(x)= f(x)\cic{1}_{\{|f|\leq c_0\lambda\}}, \quad f_2= f-f_1,$$
We get at once that
\begin{align}& \label{HE2-PF1}
\sup_{x \in Q} |f_1(x)| \leq c_0 \lambda,
\\ &  \label{HE2-PF2}
\|f_1\|_{L^p(Q)} \leq \|f \|_{L^p(Q)},
\\  \label{HE2-PF3}
& \|f_2\|_{L^2(Q)}^2\leq C \lambda^{2-p} \|f \|_{L^p(Q)}^p, \\ \label{HE2-PF4}
& \mu_{2Q}\big(\big\{  T_\V f > 2\lambda\log N  \big\}\big) \leq  \mu_{2Q}\big(\big\{  T_\V f_1  >  \lambda\log N  \big\}\big) +  \mu_{2Q}\big(\big\{  T_\V f_2  >  \lambda\log N  \big\}\big)
\end{align}
We estimate the second term in the right-hand side of \eqref{HE2-PF4}; using the $ L^2$ bound of \eqref{HE2-L2}, and \eqref{HE2-PF3},
\begin{equation} \label{HE2-PF5}
\mu_{2Q}\big(\big\{  T_\V f_2  >  \lambda\log N  \big\}\big) \leq \frac{C}{\lambda^2}\|f_2\|_{L^2(Q)}^2 \leq   \frac{C}{\lambda^p}\|f\|_{L^p(Q)}^p.
\end{equation}
We turn to the first  term in the right-hand side of \eqref{HE2-PF4}. Preliminarily observe that
$$
\mu_{2Q}\big(\big\{  T_\V f_1  >  \lambda\log N  \big\} \big)\leq \mu_{2Q}\big(\big\{ T_\V f_1     > \lambda\log N  ,  M_\V f <c_0\lambda\big\}\big)  + \mu_{2Q}\big(\big\{   M_\V f >c_0\lambda\big\}\big) ,
$$
and \eqref{HE2-PF1} implies that the second set on the right hand side is empty. Thus, in view of \eqref{HE2-PF5},   \eqref{HE2-PF0} follows from the estimate
\begin{equation}
\label{HE2-exphunt}
 \mu_{2Q}\big(\big\{x \in \R^2:T_\V g (x)   > \lambda\log N  ,  M_\V g(x)<c_0\lambda\big\}\big) \leq\frac{ C^p p^p}{\lambda^p}\|g\|_{L^p(Q)}^p,
\end{equation}
applied to $g=f_1$. We have completed the proof   of \eqref{HE2-PF0} (and thus, of Theorem \ref{main-ub1}), up to showing \eqref{HE2-exphunt}, which we do at the end of the section.
\end{proof}
\begin{proof}[Proof of \eqref{HE2-exphunt}]
We claim that for each $v \in S^1$
\begin{equation}\label{HE2-Claim}
\mu_{2Q}\big(\{x \in \R^2: |T_v(g)(x)|>\gamma\lambda, M_v g(x)< c_0 \lambda\}\big)  \leq C\e^{-c_1 \gamma}\frac{ (Cp)^p\|g\|_{L^p(Q)}^p}{\lambda^p}.
\end{equation}
To prove the claim,  assume without loss of generality $v=(1,0)$. Denote by $g_{x_2}(\cdot)$ the function $
x_1 \mapsto  g(x_1,x_2)$. Then
\begin{align*}
&\quad \mu_{2Q}\big(\{x \in \R^2: |T_v(g)(x)|>\gamma\lambda, M_v g(x)< c_0 \lambda\} \big)\\ & \leq \frac{1}{4|Q|} \int_\R |\{x_1 \in \R: |T(g_{x_2})(x_1)|>\gamma\lambda  , M_v g_{x_2}(x_1)< c_0 \lambda\}| \, \d x_2
\\& \leq \frac{1}{4|Q|} \int_\R C\e^{-c_1 \gamma} |\{x_1 \in \R: |T_\star(g_{x_2})(x_1)|> \lambda \}|  \,\d x_2\\   & \leq \frac{C}{|Q|} \int_\R (Cp)^p\e^{-c_1 \gamma} \frac{\|g_{x_2}(\cdot)\|_p^p}{\lambda^p}  \,\d x_2\leq \frac{C}{|Q|}\e^{-c_1 \gamma}\frac{ (C p)^p\|g\|_p^p}{\lambda^p}=  C\e^{-c_1 \gamma}\frac{ (Cp)^p \|g\|_{L^p(Q)}^p.}{\lambda^p}
\end{align*}
where the third to last inequality is an application of Proposition \ref{HE2-Hunt}, and last line follows from  the bound of Proposition \ref{HE2-Tstar} for $T_\star$ on $L^p(x_1)$.  Now, using \eqref{HE2-Claim} for each $T_v$, we have
\begin{align}& \quad   \mu_{2Q}\big(\big\{x \in \R^2:T_\V g (x) \nonumber >(c_1)^{-1}\lambda\log N  ,  M_\V f(x)<c_0\lambda\big\}\big) \\& \leq
 \sum_{v\in \V} \mu_{2Q}\big(\{x \in \R^2: |T_v(g)(x)|>(c_1)^{-1}\lambda\log N  , M_v g(x)< c_0 \lambda\}  \big)  \nonumber
\\ & \leq CN \e^{- \log N}\frac{ (Cp)^p\|g\|_{L^p(Q)}^p }{\lambda^p}\leq  \frac{ (C p)^p}{\lambda^p}\|g\|_{L^p(Q)}^p, \nonumber
\end{align}
which completes the proof of \eqref{HE2-exphunt}.
\end{proof}

\section{Lacunary $\V$: Proof of Theorem \ref{main-ub2}}
\label{lac}
 We need to prove that for a set $\V$ of $N$ directions which is lacunary (see Definition \ref{lacunary-def})
$$
\|T_\V f\|_p \leq C_p \sqrt{\log N} \|f\|_p, \qquad 1<p<\infty
$$
In the related paper \cite{CD10}, the $L^2$-estimate of the maximal multiplier \eqref{sect1-mdm} is reduced (with a $\sqrt{\log N}$ loss)  to the $L^2$ bound for the corresponding square function, via an application of the Chang-Wilson-Wolff inequality \cite{CWW}. See also \cite{GRAFAKOS}, where the same method had been first applied to the study of maximal one-dimensional multipliers, and \cite{CD11}. We apply the same reduction, and then show that the resulting square function   is bounded on all $L^p, $ $1<p<\infty$ uniformly in the cardinality of $\V$ (see \eqref{Lacunary:SFE}).
\subsection{Reduction to a square function estimate}\label{Lacunary:ss1}
Let $\Phi$ be a Schwartz function supported on $[1/2,2]$ and such that
$$
\sum_{k \in \mathbb Z} \Phi(2^{-k}\xi )=1, \qquad \forall \xi \in \R\backslash\{0\}.
$$
For a function $f\in C^\infty_0(\R^2)$, set $\widehat{S_k f}(\xi)=\hat f(\xi ) \Phi(2^{-k}|\xi |),$
and note that
\begin{equation}
\label{Lacunary:LP}
\|f\|_p \sim \bigg\|\Big(\sum_{k \in \mathbb{Z}} |S_k f|^2\Big)^{\frac12} \bigg\|_p, \qquad 1 <p<\infty.
\end{equation}
 We summarize the reduction to a square function in the following proposition; the proof is contained in the reference \cite{GRAFAKOS}, see also \cite{CD10}.
\begin{proposition}Let $P_i$, $i=1,\ldots,N$ be linear (uniformly) bounded multiplier  operators on $\R^2$. \label{Lacunary:CWW} Denote by $\mathcal{M}_\beta=M_{\beta}\circ M_{\beta}\circ M_{\beta}  $, where $M_\beta f :=( M [f^\beta])^{\frac1\beta}$.
Then, for  each $1<p<\infty$ and each $\eps >0$ we have
\begin{equation}
\Big\|\sup_{i=1,\ldots,N} |P_i f|\Big\|_{p} \lesssim_{\epsilon,p} (\log N)^{\frac12} \bigg\|  \bigg( \sum_{k \in \mathbb Z} \Big(\mathcal{M}_{1+\eps } \Big[ \sup_{i=1,\ldots,N} |P_i(S_k f)|\Big]\Big)^2 \bigg)^{\frac{1}{2}} \bigg\|_p.
\end{equation}
\end{proposition}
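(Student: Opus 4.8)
The plan is to carry out the Chang--Wilson--Wolff reduction in the spirit of \cite{GRAFAKOS,CD10}: decompose each $P_i f$ into Littlewood--Paley pieces, invoke the CWW exponential-square inequality to produce, for each $P_i$, a good-$\lambda$ estimate governed by a \emph{common} square function, and then sum the resulting \emph{Gaussian} tails over the $N$ operators --- the loss of $\sqrt{\log N}$ being the Gaussian counterpart of the logarithmic loss extracted from summing the \emph{exponential} tails in the proof of Theorem \ref{main-ub1}. After the usual reductions (we may assume $\hat f$ compactly supported away from the origin and, regularizing the $m_i$, that each $P_i$ has a Schwartz multiplier), $f=\sum_k S_k f$ is a finite sum, all square functions below are finite, and $B:=\sup_i |P_i f|\in L^p(\R^2)$ for every $p$. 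Since each $P_i$ is a multiplier operator, $\widehat{P_i S_k f}$ is supported in $\{2^{k-1}\le|\xi|\le 2^{k+1}\}$, so $\{P_i S_k f\}_k$ is a lacunary-frequency decomposition of $P_i f$; moreover $|P_i S_k f|\le g_k:=\sup_j |P_j S_k f|$, hence the smoothed square function $\sigma:=\big(\textstyle\sum_k (\mathcal{M}_{1+\eps} g_k)^2\big)^{1/2}$ --- the quantity inside the norm on the right-hand side of the Proposition --- dominates pointwise the analogous square function attached to each individual $P_i f$.

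The first real step is to record the CWW inequality in the form: for a function $b=\sum_k b_k$ with $\widehat{b_k}$ supported in the $k$-th Littlewood--Paley annulus $\{2^{k-1}\le|\xi|\le 2^{k+1}\}$, and every dyadic cube $Q$,
\begin{equation*}
\big|\{x\in Q:\ |b(x)-b_Q|>\gamma\lambda,\ \sigma_b(x)\le\lambda\}\big|\ \le\ C\e^{-c\gamma^{2}}\,|Q|,\qquad \gamma>1,\ \lambda>0,
\end{equation*}
where $\sigma_b:=\big(\textstyle\sum_k (\mathcal{M}_{1+\eps}|b_k|)^2\big)^{1/2}$. This rests on two ingredients: the classical dyadic CWW estimate, in which the square function is the \emph{dyadic} one, and the pointwise domination of the dyadic square function of $b$ by $\sigma_b$. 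It is precisely this second point that forces the shape $\mathcal{M}_{1+\eps}=M_{1+\eps}\circ M_{1+\eps}\circ M_{1+\eps}$ and the requirement $\eps>0$: replacing the smooth pieces $b_k$ by dyadic martingale differences and localizing onto dyadic cubes each cost a maximal average of the pieces, and the $L^{1+\eps}$ (rather than $L^1$) averaging, together with the iteration built into the threefold composition, is exactly what absorbs the overlap of the smooth annuli against the dyadic grid while preserving the exponential-square estimate. I expect this comparison to be the main technical obstacle; the remaining steps are essentially bookkeeping.

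Next I would globalize. Fix $\lambda>0$ and perform a Calder\'on--Zygmund/Whitney decomposition of $\Omega_\lambda:=\{x:\ M B(x)>\lambda\}$ into the maximal dyadic cubes $Q_j$, so that $\Omega_\lambda=\bigsqcup_j Q_j$ and $\frac{1}{|Q_j|}\int_{Q_j}B\le C\lambda$. Applying the cube estimate above with $b=P_i f$ on each $Q_j$, using $|P_i f|\le B$ to bound $|(P_i f)_{Q_j}|\le C\lambda$, together with $\sigma_{P_i f}\le\sigma$ and the inclusions $\{|P_i f|>(\gamma+C)\lambda\}\subseteq\{B>\lambda\}\subseteq\Omega_\lambda$, one sums over $j$ and then over $i=1,\dots,N$ to obtain
\begin{equation*}
\big|\{x:\ B(x)>(\gamma+C)\lambda,\ \sigma(x)\le\lambda\}\big|\ \le\ C\,N\,\e^{-c\gamma^{2}}\,\big|\{x:\ M B(x)>\lambda\}\big|,\qquad \lambda>0.
\end{equation*}

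Finally I would optimize in $\gamma$ and pass to $L^p$. Choosing $\gamma=\gamma_N\sim\sqrt{\log N}$ so that $C N\e^{-c\gamma_N^{2}}\le N^{-1}$, and writing $A_N:=\gamma_N+C\sim\sqrt{\log N}$, one gets $|\{B>A_N\lambda\}|\le N^{-1}|\{M B>\lambda\}|+|\{\sigma>\lambda\}|$ for all $\lambda>0$; integrating against $p\lambda^{p-1}\,\d\lambda$ and using the Hardy--Littlewood bound $\|M B\|_p\lesssim_p\|B\|_p$ (here $p>1$ is used) yields $\|B\|_p^p\le A_N^{p}\big(\tfrac{C_p}{N}\|B\|_p^{p}+\|\sigma\|_p^{p}\big)$. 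Since $A_N^{p}/N\to0$, for $N$ larger than a threshold depending on $p$ and $\eps$ the first term absorbs into the left-hand side, giving $\|B\|_p\lesssim_{p,\eps}\sqrt{\log N}\,\|\sigma\|_p$; the remaining finitely many values of $N$ are disposed of by the same good-$\lambda$ argument run with a sufficiently large fixed $\gamma$. This is exactly the claimed inequality. The one point requiring care in this last stage is that the maximal function of the \emph{full} supremum $B$ --- not of the individual $P_i f$ --- must appear on the right-hand side of the good-$\lambda$ inequality, so that after summing over $i$ the error term remains comparable to $\|B\|_p^{p}$ and can be absorbed.
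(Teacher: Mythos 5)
Your argument is correct and is precisely the Chang--Wilson--Wolff reduction that the paper attributes to \cite{GRAFAKOS} (see also \cite{CD10}): localize the exponential-square inequality onto the Whitney cubes of $\{MB>\lambda\}$, compare the dyadic square function with the smooth one via the iterated $L^{1+\eps}$ maximal operator, sum the Gaussian tails over the $N$ operators, and optimize $\gamma\sim\sqrt{\log N}$ before integrating the good-$\lambda$ estimate to absorb the $N^{-1}\|MB\|_p^p$ term. You also correctly single out the two delicate points --- the pointwise domination of the martingale square function (which forces the threefold $\mathcal M_{1+\eps}$) and the necessity of running the good-$\lambda$ against $MB$ rather than $M(P_if)$ so that the error is absorbed --- so this reconstruction matches the intended proof.
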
 \noindent
By means of Proposition \ref{Lacunary:CWW}, and a subsequent use of the Fefferman-Stein inequality (with $1+\epsilon<p$) we get
$$
\bigg\|\Big(\sum_{k \in \mathbb{Z}} \mathcal{M}_{1+\eps}[ g_k]^2\Big)^{\frac12} \bigg\|_p \lesssim_p \bigg\|\Big(\sum_{k \in \mathbb{Z}} |g_k|^2\Big)^{\frac12} \bigg\|_p,
$$
followed by \eqref{Lacunary:CWW}, we reduce Theorem \ref{main-ub2}  to the proof of the estimate
\begin{equation} \label{Lacunary:SFE}
 \bigg\|  \Big( \sum_{k \in \mathbb Z} |T_\V(S_k f)|^2 \Big)^{\frac{1}{2}} \bigg\|_p \lesssim_p  \|f\|_{p}, \qquad 1<p<\infty.
\end{equation}
Let us recall once again that the easy case $p=2$ of \eqref{Lacunary:SFE} was proved in \cite{CD10}.

\subsection{Proof of the square function estimate \eqref{Lacunary:SFE} } We can harmlessly assume  that the multiplier $m$ is supported on $[0,\infty)$ and that $\hat f$ is supported in the first quadrant.
We  take the node of  $\V$  to be $v_\infty=(1,0)$   and order $\V=\{v_{j}=\e^{2\pi i \theta(v_j)}\}$  counterclockwise.

Since $\theta(v_\infty)=0$,  \eqref{lacunary-defeq} reads $|\theta(v_{j+1})| \leq 2^{-1}|\theta(v_j)|$, $j \geq 1$. By eventually splitting $\V$ into the three subcollections $ \{v_{j} \in \V: j \mod 3 = i \}$, $i\in \{0,1,2\}$,  we can work under the stronger separation property
\begin{equation}
\label{Lacunary:lcn1}
|\theta(v_{j+1})| \leq 2^{- 3}|\theta(v_j)|,
\qquad j \geq 1.
\end{equation}
We denote $\delta=|\theta(v_1)|$. It is clear that for each interval $I_\ell:=(2^{-3(\ell+1)}\delta,2^{-3\ell}\delta,]$, $\ell \geq 1$, there is at most one $j(\ell)$ such that $\theta(v_{j(\ell)} )\in I_\ell$. Whenever $I_\ell \cap \{\theta(v): v \in  \V\}=\emptyset$ for (at least two consecutive)  $\ell,\ell+1, \ldots, \ell+m$, we add to $\V$ the $m$ directions  with angles $\theta \in   \{ -  2^{-3\ell}, \ldots, - 2^{-3(\ell+m-1)} \}$. We continue to call $\V$ the resulting collection and re-index  the $v_j$ so that they are ordered counterclockwise. By doing so, we have obtained that   \begin{equation}
\label{Lacunary:lcn}
2^{-3(j+1)}\delta \leq \delta_j:=|\theta(v_{j+1})- \theta(v_{j})| \leq  2^{-3j}\delta, \qquad j \geq 1.
\end{equation} It is handy to define $w_j= \theta(v_j)+ \frac{1}{4}$, since the assumption on the support of the multiplier $m$   implies that for $\xi=|\xi|\e^{2\pi i \theta(\xi)}$ in the first quadrant
 \begin{equation}
\label{Lacunary:supp} m(\xi \cdot v_j) \neq 0 \implies \theta(\xi) \in (0,w_j).
\end{equation}
Accordingly, define the intervals
$$
\alpha_0=(0,w_1], \quad \alpha_j=(w_j,w_{j+1}], \;j=1, \ldots, \#\V-1,\quad  \alpha_{\#\V}=(w_{\#\V}, w_\infty].
$$
% We further partition each $\alpha_j$ into 32 intervals of equal length
%$$
%\alpha_{j,k} = (w_j + (k-1)\delta_j, w_j + k\delta_j].$$
For an interval $I=(\alpha_\ell, \alpha_r) \subset [0,1]$, define the   frequency cutoff to the cone  $\{e^{2\pi i\theta}:\theta\in I\}$
$$\widehat{G_I f}(\xi)= \hat f(\xi) \cic{1}_I (\theta(\xi)), \qquad \xi=|\xi|\e^{2\pi i \theta (\xi)}.$$  We simply write $G_j$ for $G_{\alpha_j}$.
%
%We need a smooth analogue of $G_\alpha$. Let $\beta$ be a Schwartz function  with $\cic{1}_{[0,1]} \leq \beta \leq \cic{1}_{[-\frac12,\frac32]}$, and $\beta(t)+\beta(t-\frac32) =1,\; \frac32\leq t \leq 1$; define the multiplier operator$$ \widehat{ G_\alpha^\mathsf{s } f}(\xi):= \hat f(\xi) \beta_j(\theta(\xi)), \qquad
%\beta_j(t)= \beta\Big(   \frac{\theta(\xi)-\alpha_\ell}{|\alpha|}\Big), \quad \xi=|\xi|\e^{2\pi i \theta(\xi)} \in \R^2. $$
The bulk of the proof is contained in the next two results.
In the first lemma, which is proven in Subsection \ref{lacchap-2},  a pointwise bound on $T_\V(S_k f)$ is given,  in terms (essentially) of the maximal function along the direction $v_\infty$.
\begin{lemma}\label{Lacunary-pointwise}
Define
\begin{equation} \label{Lacunary-trunc}
 f_{\mathsf{ev}} = \sum_{j} G_{2j } f,\qquad  f_{\mathsf{odd}} = \sum_{j} G_{2j+1} f,
\end{equation}
and adopt the notation (here $v^\perp$
is the direction orthogonal to $v$)
$$
M^{\mathsf{bi}}_{v} f(x)= \sup_{t,s>0}\frac{1}{4ts} \int_{-t}^{t}\int_{-s}^s |f(x+\tau v+\sigma v^\perp)| \, \d \tau \d \sigma, \qquad x \in \R^2,
$$
for the bi-parameter maximal function in the coordinates of $v$. Then
\begin{equation} \label{Lacunary-pointwiseEQ}
T_\V(S_k f)(x) \lesssim    M^{\mathsf{bi}}_{v_\infty}  \big[  S_k f_{\mathsf{ev}} \big](x) +   M^{\mathsf{bi}}_{v_\infty}  \big[   S_k f_{\mathsf{odd}} \big](x) , \qquad x \in \R^2, \,k\in \Z.
\end{equation}
\end{lemma}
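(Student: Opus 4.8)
The plan is to fix one direction $v_j\in\V$ and the annulus scale $k$, and to produce a bound for $|T_{v_j}(S_kf)(x)|$ by the right–hand side of \eqref{Lacunary-pointwiseEQ} that is \emph{uniform} in $j$; taking the supremum over $v_j\in\V$ then gives the lemma. Since $m$ is supported on $[0,\infty)$, property \eqref{Lacunary:supp} shows that the symbol of $T_{v_j}$ vanishes off the sector $\{\theta(\xi)\in(0,w_j]\}=\bigcup_{i=0}^{j-1}\alpha_i$, so only the cones $\alpha_0,\dots,\alpha_{j-1}$ contribute and $T_{v_j}(S_kf)=\sum_{i=0}^{j-1}T_{v_j}(G_iS_kf)$. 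First I would refine this into a Whitney–type decomposition of the sector $(0,w_j)$, at radius $\sim 2^k$, into dyadic boxes in the variable $\xi\cdot v_j$ (i.e.\ measuring distance to the singular line $\{\xi\cdot v_j=0\}$, the line of angle $w_j$): each thin cone $\alpha_i$, $1\le i\le j-2$, is comparable to one such box, while the fat cone $\alpha_0$ and the boundary cone $\alpha_{j-1}$ split into several.

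The core is a \emph{good–multiplier} estimate: on each box $P$ lying at distance $\gtrsim 2^{k}\,2^{-3j}\delta$ from the singular line, the restricted symbol $m(\xi\cdot v_j)\,\Phi(2^{-k}|\xi|)\,\cic{1}_P(\xi)$ is, after the routine smoothing of the sharp cutoffs, a Hörmander–Mikhlin symbol adapted to an \emph{axis–parallel} rectangle in the $(v_\infty,v_\infty^\perp)$ frame. The two inputs are the super–lacunarity \eqref{Lacunary:lcn}, which gives $|\xi\cdot v_j|\sim 2^{k}\cdot(\text{angular width of }P)$ on $P$ and hence controls the derivatives in the $v_j$–direction, and the clustering $|v_j-v_\infty|\lesssim 2^{-3j}\delta$, which forces the $v_\infty^\perp$–derivatives of the symbol to be $\lesssim 2^{-k}$ — exactly what matches the long side of the rectangle. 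This yields $|T_{v_j}(G_PS_kf)(x)|\lesssim M^{\mathsf{bi}}_{v_\infty}\big[\widetilde G_PS_kf\big](x)$ with $\widetilde G_P$ a mild, finitely overlapping fattening of $G_P$. The remaining boxes, all within distance $2^{k}\,2^{-3j}\delta$ of the singular line, together fill out the single cone $\alpha_{j-1}^{*}$ of angular width $\sim 2^{-3j}\delta$ about $w_j$; because $w_j\approx\tfrac14$ this cone hugs the $\xi_2$–axis, so it is essentially an axis–parallel rectangle, and here one uses $m\in L^\infty$ — and, for $T_{\V}=H_{\V}$, the fact that $\xi\cdot v_j>0$ on the support of the symbol, which then equals $\cic{1}$ — to bound $|T_{v_j}(G_{\alpha_{j-1}^{*}}S_kf)(x)|$ by $M^{\mathsf{bi}}_{v_\infty}[S_kf](x)$ uniformly in $j$.

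Summing the pieces, one has controlled $|T_{v_j}(S_kf)(x)|$ by $\sum_P M^{\mathsf{bi}}_{v_\infty}[\widetilde G_PS_kf](x)$, where the frequency supports of the $\widetilde G_P$, separated according to the parity of the cone index $i$, form a uniformly separated (``lacunary with a definite ratio'') one–parameter Littlewood–Paley family in the variable $\xi\cdot v_\infty=\xi_1$, localized to the single annulus $|\xi|\sim 2^k$; the even ones localize inside $\supp\widehat{f_{\mathsf{ev}}}$ and the odd ones inside $\supp\widehat{f_{\mathsf{odd}}}$. The last step is to collapse this sum into $M^{\mathsf{bi}}_{v_\infty}[S_kf_{\mathsf{ev}}](x)+M^{\mathsf{bi}}_{v_\infty}[S_kf_{\mathsf{odd}}](x)$, i.e.\ to dominate the sum of the strong maximal functions of the pieces by the strong maximal function of the whole, using the frequency localization of the pieces (and not merely the crude pointwise domination, which loses a logarithm). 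This is the main obstacle, and it is precisely what forces the preliminary thinning/augmentation of $\V$ to the exact rate $\delta_l\sim 2^{-3l}\delta$ in \eqref{Lacunary:lcn} together with the even/odd splitting \eqref{Lacunary-trunc}: these ensure that the $\widetilde G_P$ in each parity class are genuinely \emph{separated}, with gaps comparable to their sizes, which is what makes the maximal–function collapse possible. Checking the good–multiplier bounds — especially the $v_\infty^\perp$–derivative estimates, which hinge on $|v_j-v_\infty|$ being tiny — is routine but is the other place requiring care.
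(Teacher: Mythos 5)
The good–multiplier step at the heart of your proposal — that after a vertical cutoff at the scale $2^k\delta 8^{-j}$, the symbol $m(\xi\cdot v_j)\Phi(2^{-k}|\xi|)$ is a H\"ormander--Mikhlin symbol adapted to an axis-parallel rectangle in the $(v_\infty,v_\infty^\perp)$ frame, the $\xi_2$-derivatives picking up the small factor $|(v_j)_2|\sim|\theta(v_j)|\sim\delta 8^{-j}$ — is correct and is essentially the computation the paper carries out. The difficulty is in the architecture built around it.

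You decompose the sector $(0,w_j)$ into $\sim j$ Whitney boxes $P$, obtain $|T_{v_j}(S_kf)(x)|\lesssim\sum_P M^{\mathsf{bi}}_{v_\infty}[\widetilde G_PS_kf](x)$, and then need to collapse the sum of $\sim j$ strong maximal functions into a single one, uniformly in $j$. You flag this collapse as ``the main obstacle'' and assert that the lacunarity rate and the even/odd split make it work because the pieces are ``genuinely separated.'' This is where the proposal fails: there is no pointwise inequality of the form $\sum_P M^{\mathsf{bi}}_{v_\infty}[g_P]\lesssim M^{\mathsf{bi}}_{v_\infty}\big[\sum_P g_P\big]$ for frequency-separated pieces $g_P$, no matter how lacunary they are. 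Take $g_P(x)=c_P\,\e^{i\langle\xi_P,x\rangle}\phi(x)$ with $\xi_P$ in the support of $\widetilde G_P$, $c_P$ random signs and $\phi$ a fixed bump: then $\sum_P M^{\mathsf{bi}}_{v_\infty}[g_P]\sim(\#P)\,M^{\mathsf{bi}}_{v_\infty}[\phi]$ everywhere, while $M^{\mathsf{bi}}_{v_\infty}[\sum_P g_P]$ is of size $\sqrt{\#P}$ at typical points. Frequency separation cannot repair a pointwise bound that is false; the lemma is a pointwise statement and is used as such in Section~3 (the $L^p$ gain there comes afterwards, from the vector-valued Fefferman--Stein inequality applied across the annular index $k$, not across a Whitney index $P$). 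A secondary gap is your treatment of the boundary boxes near the singular line: ``$m\in L^\infty$'' alone does not give the required kernel bound for a general H\"ormander--Mikhlin $m$, whose derivatives blow up like $|\xi\cdot v_j|^{-\alpha}$ as $\xi\cdot v_j\to 0$.

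The paper's proof avoids both issues by not decomposing at all, and this is the idea you are missing. The even/odd splitting is not used to separate Whitney pieces; its purpose is to produce a \emph{gap} between the frequency support of $S_kf_{\mathsf{ev}}$ (resp.\ $S_kf_{\mathsf{odd}}$) and the singular angle $w_j$ of the symbol $m(\xi\cdot v_j)$. Indeed, for $j$ of the correct parity the cone $\alpha_{j-1}=(w_{j-1},w_j]$ adjacent to $w_j$ is entirely absent from $\widehat{f_{\mathsf{ev}}}$, so the part of $\widehat{S_kf_{\mathsf{ev}}}$ acted upon by $T_{v_j}$ stays an angular distance $\gtrsim\delta_{j-1}\sim 8\cdot\delta 8^{-j}$ from $w_j$, hence $|\xi\cdot v_j|\gtrsim 2^k\delta 8^{-j}$ there. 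With this gap, a single smooth vertical cutoff $\psi_j(\xi_1)$ adapted to the scale $2^k\delta 8^{-j}$ gives one multiplier $\mu_j(\xi)=\Phi(2^{-k}|\xi|)\psi_j(\xi_1)m(\xi\cdot v_j)$ that is H\"ormander--Mikhlin adapted to the single rectangle $\sim(2^k\delta 8^{-j})\times 2^k$, no singularity remaining to be Whitney-decomposed. Its kernel then has product decay, and the convolution bound gives $|T_{v_j}[S_kf_{\mathsf{ev}}](x)|\lesssim M^{\mathsf{bi}}_{v_\infty}[S_kf_{\mathsf{ev}}](x)$ directly, uniformly in $j$, with no summation and therefore no collapse problem.
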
 The second lemma deals with the boundedness of the rough cutoff multiplier $\sum_j \eps_j G_j$. The proof is given in Subsection \ref {lacchap-3}.
\begin{lemma}\label{Lacunary-conemult}
We have the estimate
\begin{equation} \label{Lacunary-conemult1}
 \sup_{\eps_j=0,\pm1} \Big\| \sum_{j} \eps_j G_j f\Big\|_p \lesssim  \|f\|_p, \qquad 1<p<\infty.
\end{equation}
\end{lemma}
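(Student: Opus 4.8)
The plan is to reduce \eqref{Lacunary-conemult1} to the single square function estimate
\begin{equation}
\label{pp-sf}
\Big\| \Big(\sum_j |G_j h|^2\Big)^{\frac12} \Big\|_q \lesssim_q \|h\|_q, \qquad 1<q<\infty ,
\end{equation}
and then to prove \eqref{pp-sf}. The reduction is pure randomisation and duality: each $G_j$ is a self-adjoint, idempotent Fourier multiplier, so $\langle G_j f,g\rangle=\langle G_j f,G_j g\rangle$, whence for every choice of signs $\eps_j\in\{0,\pm1\}$ and every $g\in L^{p'}$,
\begin{equation}
\label{pp-dual}
\Big|\Big\langle \sum_j \eps_j G_j f,\, g\Big\rangle\Big|\;\le\; \sum_j \big|\langle G_j f,G_j g\rangle\big|\;\le\; \Big\| \Big(\sum_j |G_j f|^2\Big)^{\frac12} \Big\|_{p}\;\Big\| \Big(\sum_j |G_j g|^2\Big)^{\frac12} \Big\|_{p'} ;
\end{equation}
taking the supremum over $\|g\|_{p'}\le1$ and applying \eqref{pp-sf} at the exponents $p$ and $p'$ gives \eqref{Lacunary-conemult1}, uniformly in $\{\eps_j\}$. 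Since $G_j h$ depends only on the restriction of $\hat h$ to the sector $\{\theta(\xi)\in\alpha_j\}\subset\{\theta(\xi)\in(0,w_\infty]\}$, there is no loss in assuming throughout that $\hat h$ is supported in the open first quadrant.

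Turning to \eqref{pp-sf}: the case $q=2$ is immediate from Plancherel and the pairwise disjointness of the sectors $\alpha_j$ (this is the estimate already used at $p=2$ in \cite{CD10}). For general $q$ I would first replace the sharp cutoffs by smooth ones. For each $j$, fix a real, smooth, degree-$0$ homogeneous multiplier $\psi_j$ that is equal to $1$ on $\{\theta(\xi)\in\alpha_j\}$ and supported on $\{\theta(\xi)\in\tilde\alpha_j\}$, where $\tilde\alpha_j$ is obtained by enlarging $\alpha_j$ by a small fixed fraction of its angular width $\delta_j$ on each side, with derivative bounds adapted to $\delta_j$. By the lacunarity \eqref{Lacunary:lcn} — after splitting $\V$ into a bounded number of subcollections, as in the passage to \eqref{Lacunary:lcn1} — the enlarged sectors $\tilde\alpha_j$ have bounded overlap. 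Writing $\tilde G_j$ for the multiplier $\psi_j$, the estimate
\begin{equation}
\label{pp-smooth}
\Big\| \Big(\sum_j |\tilde G_j h|^2\Big)^{\frac12} \Big\|_q \lesssim_q \|h\|_q, \qquad 1<q<\infty ,
\end{equation}
is the standard Littlewood--Paley inequality for a smoothly truncated lacunary family of sectors: for $q=2$ it follows from bounded overlap, and for general $q$ from a vector-valued Calderón--Zygmund argument for the $\ell^2$-valued kernel $(\tilde K_j)_j$, whose Hörmander condition $\sup_y\int_{|x|>2|y|}\big(\sum_j|\tilde K_j(x-y)-\tilde K_j(x)|^2\big)^{\frac12}\,\d x\lesssim1$ is routine, the $\tilde K_j$ being concentrated, up to rapidly decaying tails, on a nested, dyadically separated family of rectangles (by lacunarity).

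Since $\psi_j\equiv1$ on $\alpha_j$ we have $G_j=G_j\tilde G_j$, so, setting $h_j:=\tilde G_j h$ and using \eqref{pp-smooth}, the estimate \eqref{pp-sf} is reduced to
\begin{equation}
\label{pp-rough}
\Big\| \Big(\sum_j |G_j h_j|^2\Big)^{\frac12} \Big\|_q \lesssim_q \Big\| \Big(\sum_j |h_j|^2\Big)^{\frac12} \Big\|_q , \qquad 1<q<\infty ,
\end{equation}
for arbitrary $h_j$ with $\widehat{h_j}$ supported in $\{\theta(\xi)\in\tilde\alpha_j\}$. This is where the half-plane structure enters. Write $G_j=R_j-R_{j+1}$, where $R_j$ is the Fourier projection onto the half-plane through the origin that separates $\{\theta(\xi)\in(w_j,w_\infty]\}$ from $\{\theta(\xi)\in(0,w_j]\}$, so that on the support of $\widehat{h_j}$ one has $G_jh_j=R_jh_j-R_{j+1}h_j$ and hence
$$
\Big(\sum_j |G_j h_j|^2\Big)^{\frac12}\;\le\;\Big(\sum_j |R_j h_j|^2\Big)^{\frac12}+\Big(\sum_j |R_{j+1} h_j|^2\Big)^{\frac12} .
$$
Modulo an additive constant multiple of the identity, each $R_j$ is a directional Hilbert transform along a suitable direction, a Calderón--Zygmund operator with bounds independent of that direction; hence the diagonal operator $(h_j)_j\mapsto(R_jh_j)_j$ obeys $\big\|\big(\sum_j|R_jh_j|^2\big)^{\frac12}\big\|_q\lesssim_q\big\|\big(\sum_j|h_j|^2\big)^{\frac12}\big\|_q$, $1<q<\infty$, by the standard vector-valued extension of (uniformly bounded families of) Calderón--Zygmund operators, and \eqref{pp-rough} follows (the index shift $j\mapsto j+1$ being irrelevant). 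The finitely many non-lacunary sectors — $\alpha_0=(0,w_1]$, and $\alpha_{\#\V}$ in the finite case — each contribute one bounded half-plane difference and are absorbed at once. I expect \eqref{pp-rough} to be the main obstacle: the kernels of the sharp sector cutoffs $G_j$ are not integrable — transverse to the edges of $\alpha_j$ they behave like Hilbert-transform kernels — so no vector-valued Calderón--Zygmund argument is available for $h\mapsto(G_jh)_j$ directly, and it is precisely the decomposition $G_j=R_j-R_{j+1}$, combined with the uniform-in-direction $L^q$ theory of the directional Hilbert transform in vector-valued form, that circumvents this.
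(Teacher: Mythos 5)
Your reduction from the signed sum $\sum_j\eps_jG_jf$ to the single square function \eqref{pp-sf} by duality and idempotence is correct and is essentially the same polarization argument the paper uses, and the smooth square function estimate \eqref{pp-smooth} is fine. The gap is in your proof of \eqref{pp-rough}. You correctly diagnose that the sharp cone cutoffs $G_j$ do not form a vector-valued Calder\'on--Zygmund family, but the proposed remedy does not fix that difficulty, it only relocates it: the claim that the diagonal operator $(h_j)_j\mapsto(R_jh_j)_j$ is bounded on $L^q(\ell^2)$ by ``the standard vector-valued extension of (uniformly bounded families of) Calder\'on--Zygmund operators'' is not a theorem.

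Two objections. First, on $\R^2$ the directional Hilbert transform $H_v$ is not a Calder\'on--Zygmund operator at all: its kernel is a distribution supported on the line $\R v$, not a locally integrable function off the origin, so there is no $2$-dimensional H\"ormander condition to which vector-valued CZ theory could be applied. Second, and more importantly, even in the $1$-dimensional model where the analogous half-line projections $R_{a_j}=M_{a_j}R_0M_{-a_j}$ \emph{are} genuine CZ operators with uniform constants, the diagonal operator $(h_j)_j\mapsto(R_{a_j}h_j)_j$ is \emph{not} covered by vector-valued CZ theory: the $\ell^2$-operator-valued kernel is $\mathrm{diag}\big(e^{2\pi ia_jx}/x\big)$, whose operator norm is $\sup_j|e^{2\pi ia_jx}/x|=1/|x|$, and the associated H\"ormander integral $\int_{|x|>2|y|}\sup_j\big|e^{2\pi ia_j(x-y)}/(x-y)-e^{2\pi ia_jx}/x\big|\,\d x$ diverges because $\sup_j|1-e^{2\pi ia_jy}|\sim1$. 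This is precisely why the $L^q(\ell^2)$ boundedness of $(h_j)\mapsto(R_{a_j}h_j)$, even for a \emph{lacunary} sequence $(a_j)$, is a nontrivial theorem of C\'ordoba--Fefferman type, and why in the full range $1<q<\infty$ the known proofs (including the one the paper invokes for Proposition~\ref{Lacunary:VVpr}) pass through weighted $L^2$ estimates, e.g.\ the sharp $A_2$ bound for the Hilbert transform plus the Rubio de Francia/Garc\'{\i}a-Cuerva extrapolation mechanism. Your \eqref{pp-rough} is, up to notation, the same statement as Proposition~\ref{Lacunary:VVpr}, so the proposal ends by assuming the hard ingredient.

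To close the gap you would need to actually supply a proof of the lacunary C\'ordoba--Fefferman inequality in the full range: for instance, establish a weighted $L^2$ bound $\|H_vf\|_{L^2(w)}\leq C[w]_{A_2}\|f\|_{L^2(w)}$ uniformly in $v$ (which reduces to the one-dimensional Hilbert transform by rotation and Fubini), then run the vector-valued extrapolation of Garc\'{\i}a-Cuerva--Rubio de Francia with the frequency localization $\supp\widehat{h_j}\subset\tilde\alpha_j$ entering through the smooth square function step. That is exactly the route the paper takes via Proposition~\ref{Lacunary:VVpr}, so your argument would then coincide with the paper's modulo presentation.
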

\begin{proof}[Conclusion of the proof of \eqref{Lacunary:SFE}]
Lemma \ref{Lacunary-conemult} immediately implies the bounds
\begin{equation} \label{Lacunary:SFE1}
\|f_{\mathsf{ev}}\|_p, \; \|f_{\mathsf{odd}}\|_p \lesssim \|f\|_p, \qquad 1< p <\infty.
\end{equation}
We conclude, using the Fefferman-Stein inequality on the vector valued $M^{\mathsf{bi}}_{v_\infty}$, that
\begin{align*}
&\quad\bigg\|  \Big( \sum_{k \in \mathbb Z} |T_\V(S_k f)|^2 \Big)^{\frac{1}{2}} \bigg\|_p \\ & \lesssim \bigg\|  \Big( \sum_{k \in \mathbb Z} \Big[ M^{\mathsf{bi}}_{v_\infty} \big[  S_k f_{\mathsf{ev}} \big] \Big]^2 \Big)^{\frac{1}{2}} \bigg\|_p + \bigg\|  \Big( \sum_{k \in \mathbb Z} \Big[  M^{\mathsf{bi}}_{v_\infty}  \big[  S_k f_{\mathsf{odd}} \big] \Big]^2 \Big)^{\frac{1}{2}} \bigg\|_p
\\ & \lesssim \bigg\|  \Big( \sum_{k \in \mathbb Z} \big| S_k f_{\mathsf{ev}} \big| ^2 \Big)^{\frac{1}{2}} \bigg\|_p +  \bigg\| \Big( \sum_{k \in \mathbb Z} \big| S_k f_{\mathsf{odd}} \big| ^2 \Big)^{\frac{1}{2}} \bigg\|_p \\ & \lesssim \|f_{\mathsf{ev}}\|_p+\|f_{\mathsf{odd}}\|_p \lesssim \|f\|_p.
\end{align*}
 This completes the proof of \eqref{Lacunary:SFE}, and in turn, of Theorem \ref{main-ub2}.
\end{proof}
\subsection{Proof of Lemma \ref{Lacunary-pointwise}} \label{lacchap-2}
The estimate \eqref{Lacunary-pointwiseEQ} is an easy consequence of
\begin{equation}
\label{Lacunary:PW1}
\big|T_{v_j} [S_k f_{\mathsf{ev}}] (x)\big| \lesssim      M^{\mathsf{bi}}_{v_\infty}  \big[  S_k f_{\mathsf{ev}} \big](x), \qquad \forall x \in \R^2,\, v_j \in \V,
\end{equation}
and of the analogue estimate for $S_k f_{\mathsf{odd}}$.
Observe that $T_{v_{2j}} [S_k f_{\mathsf{ev}}]= T_{v_{2j-1}} [S_k f_{\mathsf{ev}}]$, since $\widehat{ f_{\mathsf{ev}}}$ is zero on the cone bordered by $w_{2j-1} $ and $w_{2j}$. Thus it suffices to consider odd $j$.
\noindent
We now show \eqref{Lacunary:PW1}. Let $j$ be fixed throughout.  We will omit some dependence on $j$ in the notation. Denote by $$ (\eta_{1 }, \eta_{2 }):=\{tw_{j}:t>0\} \cap \{|\xi|=2^k\}, \qquad (\zeta_{1 }, \zeta_{2 }):=\{t w_{j+1}   :t>0\} \cap \{|\xi|=2^{k+1}\},
$$
As a consequence of \eqref{Lacunary:lcn1}, we have that
$$
0 <\zeta_1 = 2^{k+1} \cos(w_{j+1}) \leq  2^{k+1}  |\theta(v_{j+1})|  <2^{k-2}|\theta(v_{j})|  \leq  2^k \cos(w_j)= \eta_1 \sim 2^k \delta 8^{-j}
$$
Now, let $\psi_j$ be a positive Schwartz function on $\R$ which is equal to $1$ on $[2^k 8^{- j -1}\delta,2^k ]$ and vanishes outside of  $[2^k 8^{-j -2}\delta ,2^{k+1} ]$. The idea is that we can insert the vertical frequency cutoff given by $\psi_j(\xi_1)$ harmlessly, because (see \eqref{Lacunary:supp}) $$
\widehat{ [S_kf_{\mathsf{ev}} ]} (\xi) m(\xi \cdot v_j) \neq 0 \implies \xi_1 \geq \eta_1 \implies  \psi_j(\xi_1)=1,
$$
and therefore $\widehat{T_{v_j} [S_kf_{\mathsf{ev}} ]}(\xi) = \widehat{S_kf_{\mathsf{ev}}}(\xi) \mu_j(\xi)$, where we set
$
\mu_j(\xi) = \Phi(2^{-k}|\xi|)\psi_j(\xi_1) m(\xi \cdot v_j).
$
The multiplier $\mu_j$ is easily seen to satisfy
$$
\big|\partial^\alpha_{\xi_1}\partial^\beta_{\xi_2} \mu_j(\xi_1,\xi_2)\big| \lesssim 8^{j\alpha}\delta^{-\alpha}2^{-k \alpha } 2^{-k \beta } , \qquad 0\leq \alpha,\beta \leq \kappa,
$$ for some constant $\kappa>3$.
Thus $K_j=\check\mu_j$ obeys the bound $$
|K_j (x_1,x_2)| \lesssim  2^{2k} \delta 8^{-j} \big| 1+2^k\delta 8^{-j} |x_1|+2^k|x_2| \big|^{-3}
$$
This implies
$$
|T_{v_j} [S_kf_{\mathsf{ev}} ]| (x_1,x_2)\lesssim| S_kf_{\mathsf{ev}} *K_j|(x_1,x_2)\lesssim M^{\mathsf{bi}}_{v_\infty}[S_kf_{\mathsf{ev}} ](x_1,x_2),
$$
as claimed in \eqref{Lacunary:PW1}. The proof of Lemma \ref{Lacunary-pointwise} is complete.
%Let $\psi_j $ be a smooth positive function with
%$$
%\cic{1}_{}
%$$
\subsection{Proof of Lemma \ref{Lacunary-conemult}} \label{lacchap-3} We need a smooth analogue of $G_I$. Let $\beta$ be a positive Schwartz function  with support in $ [-\frac12,\frac32] $ and such that
$
\sum_{\ell \in \Z} \beta( t - \ell) = \cic{1}_\R(t)
$; define the multiplier operator$$ \widehat{ G_I^\mathsf{s } f}(\xi):= \hat f(\xi)  \beta\Big(   \frac{\theta(\xi)-\alpha_\ell}{|I|}\Big), \qquad \xi=|\xi|\e^{2\pi i \theta(\xi)} \in \R^2. $$
Let us cover each $\alpha_j$ by 128 intervals of equal length $\alpha_{j,\ell} = (w_j + (\ell-1)\frac{\delta_j}{128}, w_j + \ell\frac{\delta_j}{128}\big]$, and denote
$  G_{\alpha_{j,\ell}}^\mathsf{s }$ simply by $G_{j,\ell} ^\mathsf{s }$.
By the definition of $\beta$, we have the identity \begin{equation}
\label{identity}
G_j f = G_j (G^{\mathsf{s}}_j f), \qquad  G_j^{\mathsf s} f:= \sum_{\ell=1}^{128} G_{j,\ell}^{\mathsf s} f .
\end{equation}
 The frequency support of $G_{j,\ell}^\mathsf{s }$ is contained in $2\alpha_{j,\ell}$ (the interval with same center as $\alpha_{j,\ell}$ and twice the length). By \eqref{Lacunary:lcn}, $\delta_j$ and $\delta_{j+1}$ are  within a factor of $16$, thus
$$
\sum_{j} \sum_{\ell=1}^{128} \cic{1}_{2\alpha_{j,\ell}}(t) \leq 32.
$$ The above observations and the smoothness of $\beta$  imply that  $G= \sum_{j,\ell} \eps_{j,\ell}G_{j,\ell}^{\mathsf{s}}$ is a (two-dimensional) H\"ormander-Mihlin multiplier, and therefore
\begin{equation}
 \sup_{\eps_j=0,\pm1}  \Big\|\sum_{j,\ell} \eps_{j,\ell}G_{j,\ell}^{\mathsf{s }}h\Big\|_p \lesssim_p \|h\|_p, \qquad 1<p<\infty;
\end{equation}
see \cite[Theorem IV.3]{STEIN2}. By randomization, we get the square function inequality
\begin{equation} \label{Lacunary:smooth}
 \bigg\|  \Big( \sum_{j,\ell} |G^\mathsf{s}_{j,\ell}(h)|^2 \Big)^{\frac{1}{2}}\bigg\|_p \lesssim \|h\|_p, \qquad 1<p<\infty.
\end{equation}
 We will need the vector-valued inequality of the Proposition below, which is a variant of the results of \cite{CF} and its refinement  \cite[Theorem VI.8.1]{GCRDF}. The full range $1<p<\infty$ is obtained by combining the method used in \cite{GCRDF} with the sharp weighted bound for the Hilbert transform in terms of the $A_2$ characteristic of the weight  \cite{PET}.
\begin{proposition} \label{Lacunary:VVpr}
For each $1< p <\infty $, we have that
\begin{equation} \bigg\|  \Big( \sum_{j} |G_j[h_j]|^2 \Big)^{\frac{1}{2}}\bigg\|_p \lesssim\bigg\|  \Big( \sum_{j} | h_j |^2 \Big)^{\frac{1}{2}}\bigg\|_p. \label{Lacunary:VV}
\end{equation}
\end{proposition}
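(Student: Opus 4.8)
The inequality \eqref{Lacunary:VV} is a vector‑valued, Littlewood–Paley‑type estimate for the family of rough cone multipliers $\{G_j\}$ adapted to the lacunary angular partition $\{\alpha_j\}$. The plan is to deduce it from the classical Rubio de Francia square function theorem for arbitrary disjoint intervals, upgraded to the vector‑valued setting in the spirit of \cite{CF} and \cite[Theorem VI.8.1]{GCRDF}. First I would record the one‑dimensional picture: after freezing the radial variable, $G_j$ is (up to the harmless radial cutoffs already built into our $S_k$/Littlewood–Paley decomposition) the Fourier projection onto the angular interval $\alpha_j$, and the $\alpha_j$'s are pairwise disjoint with comparable consecutive lengths by \eqref{Lacunary:lcn}. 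The scalar Rubio de Francia inequality for a disjoint family of intervals gives $\|(\sum_j |G_j h|^2)^{1/2}\|_p \lesssim \|h\|_p$ for $p\ge 2$; the content here is the two‑sided vector‑valued version.

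The key steps, in order, are as follows. (1) Reduce to the smooth model: using the identity \eqref{identity}, $G_j = G_j \circ G_j^{\mathsf s}$, so it suffices to prove \eqref{Lacunary:VV} with $G_j$ replaced by $G_j^{\mathsf s}$ on the right and to absorb the rough projection $G_j$ on the left by a Calderón–Zygmund argument; the smooth square function bound \eqref{Lacunary:smooth} handles the $G_{j,\ell}^{\mathsf s}$. (2) Invoke the machinery of \cite[Theorem VI.8.1]{GCRDF}: the vector‑valued estimate for a family of interval projections follows by iterating the scalar bound and a weighted inequality, where the relevant weight class is $A_p$ (equivalently $A_2$ by extrapolation). (3) Supply the missing weighted input for the full range $1<p<\infty$: the argument of \cite{GCRDF} needs a weighted bound for the Hilbert transform (the one‑dimensional building block of each $G_j$, which is a difference of two modulated Hilbert transforms), and here I would plug in the sharp $A_2$ bound $\|H\|_{L^2(w)\to L^2(w)}\lesssim [w]_{A_2}$ of \cite{PET}; this is precisely what allows one to push past $p=2$ and obtain both directions of \eqref{Lacunary:VV} on all of $1<p<\infty$. (4) Combine: the weighted estimate plus Rubio de Francia's extrapolation yields the vector‑valued inequality \eqref{Lacunary:VV} for $2\le p<\infty$, and duality (the operator $h\mapsto (G_j h)_j$ and its adjoint $(h_j)_j \mapsto \sum_j G_j h_j$ are essentially the same family, since $G_j$ is a projection) gives the range $1<p\le 2$.

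The main obstacle is step (3)–(4): making the passage to the full range $1<p<\infty$ rigorous. The disjoint‑interval Rubio de Francia theorem is genuinely one‑sided ($L^p$ for $p\ge 2$ only), so for $p<2$ one cannot argue directly; one must instead set up the weighted $L^2$ (or weighted $L^p$) estimate with the quantitative $A_2$ dependence, check that the constant in Rubio de Francia's construction interacts correctly with the $A_2$ characteristic, and then extrapolate and dualize. The comparability \eqref{Lacunary:lcn} of consecutive angular lengths is what keeps the overlap of the enlarged intervals bounded (as in the count $\sum_{j,\ell}\cic{1}_{2\alpha_{j,\ell}}\le 32$ above), ensuring all implied constants are absolute and independent of $N=\#\V$. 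Once the weighted bound is in hand, the rest is the standard vector‑valued transference of \cite{CF} and \cite[Ch.~VI]{GCRDF}, and I would only sketch it.
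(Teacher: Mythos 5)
Your core plan (steps 2--4: invoke \cite{CF} and \cite[Theorem VI.8.1]{GCRDF}, supply the sharp $A_2$ bound from \cite{PET}, and extrapolate to get the full range $1<p<\infty$) is exactly the route the paper takes; its ``proof'' is precisely that citation, so on that level you have identified the right ingredients. However, your step (1) is a misconception and should be dropped. The identity $G_j = G_j\circ G_j^{\mathsf s}$ from \eqref{identity} gives no reduction: rewriting $G_j h_j$ as $G_j(G_j^{\mathsf s}h_j)$ still leaves the rough projection $G_j$ in place, so proving the displayed bound for $(G_j g_j)_j$ with $g_j = G_j^{\mathsf s}h_j$ is exactly the statement \eqref{Lacunary:VV} you are trying to establish -- the argument is circular. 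In the paper, \eqref{identity} and the smooth square function bound \eqref{Lacunary:smooth} enter \emph{after} Proposition \ref{Lacunary:VVpr} has been proved, in the deduction of Lemma \ref{Lacunary-conemult} (apply \eqref{Lacunary:VV} with $h_j = G_j^{\mathsf s}g$, then use \eqref{Lacunary:smooth} to control $\sum_j|G_j^{\mathsf s}g|^2$); you have pulled that downstream application into the proof of the proposition itself, where it does nothing.

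Two smaller points. First, your description of the duality in step (4) misidentifies the operator: the map in \eqref{Lacunary:VV} is the diagonal operator $(h_j)_j\mapsto(G_j h_j)_j$ on $L^p(\ell^2)$, not $h\mapsto(G_j h)_j$; its adjoint on $L^{p'}(\ell^2)$ is again $(h_j)_j\mapsto(G_j h_j)_j$, which is what makes the dualization work -- but it is also worth noting that once you have the weighted $L^2(w)$ bound for all $w\in A_2$ with constant $\lesssim[w]_{A_2}$, Rubio de Francia extrapolation already delivers every $1<p<\infty$ directly, without a separate duality step. Second, the role of \eqref{Lacunary:lcn} is lighter than you suggest: Proposition \ref{Lacunary:VVpr} is really a general statement about families of disjoint-interval projections in the GCRDF framework, and the comparability of consecutive lengths is not what drives the full range $1<p<\infty$ -- the sharp $A_2$ bound is.
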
\noindent
Recall the identity \eqref{identity}. Applying \eqref{Lacunary:VV} for $h_j=G^{\mathsf{s}}_j g$, and subsequently using \eqref{Lacunary:smooth}, we get that for each $g\in L^2\cap L^p$
\begin{align*}\bigg\|  \Big( \sum_{j} |G_j g |^2 \Big)^{\frac{1}{2}}\bigg\|_p & = \bigg\|  \Big( \sum_{j} |G_j[G^{\mathsf{s}}_j g]|^2 \Big)^{\frac{1}{2}}\bigg\|_p \lesssim  \bigg\|  \Big( \sum_{j}  |G^{\mathsf{s}}_j g|^2 \Big)^{\frac{1}{2}}\bigg\|_p \\ & \nonumber
\lesssim\bigg\|  \Big( \sum_{j,k} | G^{\mathsf{s}}_{j,k} g |^2 \Big)^{\frac{1}{2}}\bigg\|_p \lesssim \|g\|_p
\end{align*}
\noindent
When $p=2$, the first and the  last quantities in the above display coincide, so that polarization  actually gives the equivalence
\begin{equation} c_p\|g\|_p\leq \bigg\|  \Big( \sum_{j} |G_j g |^2 \Big)^{\frac{1}{2}}\bigg\|_p\leq C_p \|g\|_p, \qquad 1<p<\infty. \label{Lacunary:VV3}
\end{equation}
Applying the left inequality in \eqref{Lacunary:VV3} to $g=\sum_j \eps_j G_j f$ then yields
$$
 \|g\|_p\lesssim \bigg\|  \Big( \sum_{j} |G_j g |^2 \Big)^{\frac{1}{2}}\bigg\|_p\leq \bigg\|  \Big( \sum_{j} |G_j f|^2 \Big)^{\frac{1}{2}}\bigg\|_p\lesssim \|f \|_p, \qquad 1<p<\infty.
$$
where the last step follows from the right inequality in \eqref{Lacunary:VV3}. The last display is exactly the assertion we had to prove.

\section{Grids, adaptedness, and a product John-Nirenberg inequality}
\label{Grids}
In this section, we define  product size and establish a version of the (product) John-Nirenberg inequality, Proposition \ref{JNpp}. Most of these results are well known from previous literature. Let us start with some definitions.
\subsection{Grids} A  one-dimensional \emph{grid} $\G$ is a collection of intervals with the property that
\begin{equation}
\label{grid-1} I, I' \in \G \implies I\cap I' \in \{\emptyset, I, I'\}.
\end{equation} For $i \in \{0,1,2\}$,  $\D^i=\big\{2^j\big(\ell+\frac{i(-1)^j}{3}+[0,1): j, \ell \in \mathbb Z\big)\big\}$ are three (dyadic) grids with the property that
for each interval $J \subset \R$ there exists some $I \in \D\cup\D^{1}\cup \D^{2}$ with $J \subset I \subset 9J$\footnote{By $CJ$ we mean the interval  with same center as $J$ and  $C$ times longer. If $R=I\times J$ is a rectangle, $CR=CI\times CJ$. }. For simplicity, we write $\D$ for $\D^0$; for a scale $\scl \in 2^{\mathbb Z},$ $\D^i_\scl=\{I \in \D^i: |I|=\scl\}$.

  A grid $\G$ is said to be $K$-sparse if for every two $I,J \in \G$
\begin{equation}
\label{grid-2}  |I| < |I'| \implies |I|\leq 2^K |I'|, \qquad |I|=|I'|\implies \dist(I,I')\geq 2^{K} |I'|.
\end{equation}
Right from the definition, if $\G$ is $(K+10)$-sparse and $I,I' \in \G,$ with $|I|=|I'|$, the dilates $2^{K+4}I$ and $2^{K+4 }I'$ do not intersect. On the other hand, $I,I' \in \G, $ $I \subset I'  \implies  2^{K+4} I \subset 2I'$. Using this property, it follows that if $\G$ is $(K+10)$-sparse,
\begin{equation}
\label{grid-3}
\forall I \in \G \; \exists J(I) \in \D^0\cup\D^{1}\cup \D^{2} \textrm{ with } 2^KI \subset J(I) \subset 9\cdot 2^KI,
\end{equation}{ and } $|I|=|I'| \implies J(I) \cap J(I')=\emptyset $, $I \subset I' \implies J(I) \subset J(I').$
Now, observe that every dyadic grid $\G$ can be written as the disjoint union of $K2^K$ $K$-sparse dyadic grids. It follows that $2^K\G:=\{2^K I: I \in \G\}$ can be written as the union of $3\cdot K  2^{K}$ subsets, each of which fits into a dyadic grid as described in \eqref{grid-3}.

\subsection{$L^2$-adaptedness}
Let $R=I_R \times J_R$ be a rectangle in $\R^2$ with orientation $(e_R,e_R^\perp)$. We denote by $c(R)$ its center and by $R^{(k)}$ the rectangle $2^kR$. Let $K$  be a large positive constant. We say that a Schwartz function $\psi_R$ is   $L^2$-\emph{adapted} to  the rectangle $R$ if for all $\alpha, \beta>0$ there exists  a constant $C_{\alpha,\beta}$ such that, in the coordinates ($e_R,e_R^\perp$),
\begin{equation}
\label{L2-ad}
|\partial_{x_1}^\alpha \partial_{x_2}^\beta\psi_R(x_1,x_2)| \leq  \frac{C_{\alpha,\beta}}{|I_R|^{\alpha+\frac12} |J_R|^{\beta+\frac12}} \Big(1+\frac{|x_1-c(R)_1|}{|I_R|} + \frac{|x_2-c(R)_2|}{|J_R|}  \Big)^{-K}.
\end{equation}
  Let $\RR$ be a collection of rectangles (with possibly different orientations). We say that $\{\psi_R\}_{R \in \RR}$, are  $L^2$-adapted to $\RR$ if each $\psi_R$ is $L^2$-adapted with same the constants $C_{\alpha,\beta},K$.

If $\{\psi_R\}_{R \in \RR}$ are $L^2$-adapted and for each $R$ we have $\int_{\R^2} \psi_R=0$, we say that $\{\psi_R\}_{R \in \RR}$ are \emph{wave packets} adapted to $\RR$. If the stronger condition
\begin{equation} \label{meanzero}   \int_{\{x+te_{R} : t \in \R\}} \psi_{R}  =  \int_{\{x+te_{R}^\perp: t \in \R\}} \psi_{R}    = 0, \qquad  \forall \, x \in \R^2
\end{equation}
holds, we say that $\{\psi_R\}_{R \in \RR}$ are product wave packets adapted to $\RR$.
\subsection{A product John-Nirenberg inequality}
Let $\RR$ be a collection of rectangles $R=I_R \times J_R$ with fixed orientation. Assume that $\I_\RR:=\{I_R:R\in\RR\}$ and $\mathcal{J}_\RR:=\{J_R:R\in \RR\} $ are each (respectively) subsets of a dyadic grid $\I$ (resp.\ $\mathcal J$). We will use the notation $$\sh(
\RR)= \bigcup_{R \in \RR} R $$
for the \emph{shadow} of such a  collection.
Given a collection of complex coefficients $B=\{b_R\}_{R \in \RR}$ and functions   $\{\psi_R\}_{R \in \RR}$, define
\begin{equation}  \label{Bdef}
\mathsf{B}_{\RR'}(x)= \sum_{R\in \RR'} b_R \psi_R(x), \qquad \RR'\subset \RR.
\end{equation}
Let us introduce
\begin{equation}  \label{sizedef}
\size(B) = \sup_{\RR'\subset \RR} \bigg( \frac{1}{|\sh(\RR')|} \sum_{R \in \RR'} |b_R|^2 \bigg)^{\frac12}
\end{equation}
In the following proposition, we show how $\size$ is akin to the product BMO norm.
\begin{proposition} Let $\{\psi_R\}_{R \in \RR}$ be product wave packets adapted to $\RR$, and with
\begin{equation}
\label{supp}
\mathrm{supp}\, \psi_R \subset R.
\end{equation}
We have the estimates \label{JNpp}
\begin{align}
\label{JNp}
 &\| \mathsf{B}_{\RR}\|_{p} \leq C p^2  \size(B) |\sh(\RR)|^{\frac1p}, \qquad 2 \leq p < \infty,
\\
\label{JNexp} & \big|\big\{x \in \sh (\RR): |B_{\RR}(x)| > \lambda \big\} \big| \leq C\exp\Big(-c \textstyle \sqrt{\frac{\lambda}{\size(B)}} \Big)|\sh (\RR)|.
\end{align}
\end{proposition}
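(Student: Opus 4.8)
The plan is to prove the two estimates in tandem, with \eqref{JNexp} being the core exponential integrability bound and \eqref{JNp} following from it by the usual integration of the distribution function (the factor $p^2$ appearing because $\sqrt{\lambda}$ rather than $\lambda$ sits in the exponent). The strategy for \eqref{JNexp} is a Calder\'on--Zygmund / John--Nirenberg stopping-time argument carried out in the bi-parameter setting, exploiting the product structure encoded in $\size(B)$ together with the crucial support hypothesis \eqref{supp}, which makes $\mathsf B_{\RR'}$ supported in $\sh(\RR')$ and which is what allows the one-parameter iteration to be performed without a genuine product John--Nirenberg machinery.

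First I would normalize $\size(B)=1$ and fix $\RR$. The main step is a \emph{selection algorithm}: for a large threshold (a multiple of some $\mu$ to be optimized), one extracts a subcollection $\RR_1 \subset \RR$ which is ``heavy'' at that level, discards it, and shows that on the complement the partial sum $\mathsf B_{\RR\setminus\RR_1}$ is pointwise controlled. Concretely I would run a stopping time on the \emph{maximal} partial sums: for each point $x$ look at $\sup_{\RR'} |\mathsf B_{\RR'}(x)|$ over tidy subcollections, select a disjoint (or boundedly overlapping) family of maximal rectangles where this exceeds the threshold, and use the definition of $\size$ via the $\ell^2$ sum $\sum_{R\in\RR'} |b_R|^2 \le |\sh(\RR')|$ to bound the measure of the selected shadow by a fixed fraction of $|\sh(\RR)|$. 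The product wave-packet / mean-zero conditions \eqref{meanzero} and the $L^2$-adaptedness \eqref{L2-ad} are used here to control the $L^2$ norm of $\mathsf B_{\RR'}$ by $\size(B) |\sh(\RR')|^{1/2}$ — essentially an orthogonality (almost-orthogonality) estimate in the two separate parameters, combined with the almost-disjointness of the supports inside a grid. Iterating this selection at thresholds $\mu, 2\mu, 3\mu,\dots$ one gets a geometric decay $|\{|\mathsf B_\RR|>k\mu\}| \lesssim 2^{-ck}|\sh(\RR)|$ after checking that the ``error'' left behind at each stage is genuinely smaller than $\mu$ (this is where one uses that the removed part has mean zero and the remaining tiles are well-separated). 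Choosing $\mu \sim \sqrt{\lambda}$ and summing over $k$ with $k\mu \sim \lambda$, i.e. $k \sim \sqrt{\lambda}$, yields the claimed $\exp(-c\sqrt{\lambda/\size(B)})$ decay.

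For \eqref{JNp}: once \eqref{JNexp} is in hand, write
\begin{equation*}
\|\mathsf B_\RR\|_p^p = p\int_0^\infty \lambda^{p-1} |\{|\mathsf B_\RR|>\lambda\}|\,\d\lambda \le Cp\,|\sh(\RR)| \int_0^\infty \lambda^{p-1} \exp\big(-c\sqrt{\lambda/\size(B)}\big)\,\d\lambda,
\end{equation*}
and the substitution $\lambda = \size(B)\, t^2$ turns the integral into $\Gamma(2p+1)$ up to constants and powers of $\size(B)$; by Stirling $\Gamma(2p+1)^{1/p} \sim p^2$, giving $\|\mathsf B_\RR\|_p \le Cp^2\,\size(B)\,|\sh(\RR)|^{1/p}$, which is \eqref{JNp}.

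The main obstacle, as I see it, is running the stopping-time argument cleanly in the \emph{bi-parameter} setting: unlike the one-parameter John--Nirenberg inequality, a product BMO function is not controlled by a simple Calder\'on--Zygmund decomposition, and there is no Journ\'e-type lemma being invoked here. What saves the argument — and what I would lean on heavily — is the support restriction \eqref{supp} $\supp\psi_R \subset R$: it forces $\mathsf B_{\RR'}$ to live on $\sh(\RR')$, so the selected exceptional sets are honest shadows whose measures add up controllably, and it lets the $L^2$-based selection at each threshold be localized. The secondary technical point is verifying the almost-orthogonality bound $\|\mathsf B_{\RR'}\|_2 \lesssim \size(B)|\sh(\RR')|^{1/2}$ with the product (double) mean-zero condition; this is where \eqref{meanzero} rather than just $\int\psi_R=0$ is essential, since one needs cancellation separately in each coordinate to beat the overlaps of rectangles of very different eccentricities within the two grids $\I$ and $\mathcal J$. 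I expect these to be routine given the setup in the section, but they are the places where the proof could go wrong if the hypotheses were weakened.
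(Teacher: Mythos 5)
Your overall architecture is inverted relative to the paper's, and the inversion hides a genuine gap.

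The paper proves \eqref{JNp} first and then obtains \eqref{JNexp} by extrapolation (the easy Chebyshev computation). To get \eqref{JNp} it factors the estimate as $\|\mathsf B_\RR\|_p \lesssim p \|\mathsf{SB}_\RR\|_p \lesssim p^2 \size(B)|\sh(\RR)|^{1/p}$, where the first inequality is Lemma \ref{B-SBL} and rests on the product Chang--Wilson--Wolff inequality (Proposition \ref{f-Delta}, proved by two successive one-parameter applications of a CWW good-$\lambda$ estimate with $A_1$ weights), and the second is Lemma \ref{JNSBL}, a duality/iteration argument for the square function $\mathsf{SB}_\RR$ that uses only the strong maximal function $\M_\RR$ and the definition of $\size$. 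Each of the two inequalities contributes one factor of $p$, which is where the $p^2$ comes from. You go the other way: prove \eqref{JNexp} by a bi-parameter stopping-time iteration, then integrate to get \eqref{JNp}. Your derivation of \eqref{JNp} from \eqref{JNexp} is fine (the $\Gamma(2p)$ computation is correct), but your proof of \eqref{JNexp} is where the difficulty has been pushed, and it has not been resolved.

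The gap is precisely the step you flag as the ``secondary technical point'' and then wave through: running the Calder\'on--Zygmund selection with a controlled residual in the two-parameter setting. Your iteration scheme ``$|\{|\mathsf B_\RR|>k\mu\}| \lesssim 2^{-ck}|\sh(\RR)|$ with each stage leaving an error $\lesssim\mu$'' requires a pointwise $L^\infty$ bound on the piece of $\mathsf B_\RR$ that is \emph{not} removed at a given stage, on the complement of the selected shadow. In one parameter this comes from the nesting structure of the stopping intervals. In two parameters no such bound is available: rectangles of wildly different eccentricities sharing a point can number arbitrarily many, and the partial sum over the discarded tiles has no $L^\infty$ control — this is the product BMO phenomenon (Carleson/Journé), and the support restriction \eqref{supp} does not rescue it. Support in $R$ localizes $\mathsf B_{\RR'}$ to $\sh(\RR')$, which is what makes the shadows honest exceptional sets, but it does not bound the \emph{overlap}, which is what the pointwise error control would need. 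Your choice ``$\mu\sim\sqrt\lambda$'' is a second symptom of the same problem: a John--Nirenberg increment must be a fixed multiple of $\size(B)$, independent of the target level $\lambda$; letting $\mu$ grow with $\lambda$ requires the per-stage error to grow with $\lambda$ too, which is exactly the unproved bound. In short, the sub-Gaussian tail $\exp(-c\sqrt{\lambda})$ cannot be produced by a fixed-increment iteration — it encodes a square-function/martingale phenomenon, and the paper correctly routes through the product Chang--Wilson--Wolff inequality to capture it. You would need to either supply a substitute for that inequality or prove directly that $\|\mathsf B_\RR\|_p \lesssim p^2$, and at that point you have essentially recovered the paper's Lemmata \ref{JNSBL}, \ref{B-SBL}, and Proposition \ref{f-Delta}.
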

Subsections \ref{ss44} and  \ref{ss45}  are devoted to the proofs (respectively)  of Proposition \ref{JNpp} and of the related product Chang-Wilson Wolff inequality (Proposition \ref{f-Delta}).
We can actually show that estimate \eqref{JNp} continues  to hold even when the restriction \eqref{supp} on the support of the $\psi_R$ is removed.  This will be done in Subsection \ref{ss46}.
\begin{remark}
We also detail a one parameter version of Proposition \ref{JNpp}, whose proof follows the same lines. Let    $\RR$ be a collection of rectangles as above, with   fixed eccentricity $\frac{|J_R|}{|I_R|}\equiv \ecc$. Let $\{\psi_R\}_{R \in \RR}$ be   wave packets adapted to   $\RR$, and with $
\mathrm{supp}\, \psi_R \subset R.
$
Then\begin{equation}
\label{JNexp1P}
\big|\big\{x \in \sh (\RR): |B_{\RR}(x)| > \lambda \big\} \big| \leq C\exp\Big(-c \textstyle  {\frac{\lambda}{\size(B)}} \Big)|\sh (\RR)|.
\end{equation}
\end{remark}
\subsection{Proof of Proposition \ref
{JNpp}} \label{ss44}
For simplicity, say $\I=\mathcal{J}=\D$. Proposition \ref{JNpp} relies upon two results.
 First, we establish an inequality similar to \eqref{JNp} for the related square function
\begin{equation}  \label{SBdef}
\mathsf{SB}_{\RR'}(x)= \bigg( \sum_{R\in \RR'}  \frac{|b_R|^2}{|R|} \cic{1}_R (x)\bigg)^{\frac12}, \qquad \RR'\subset \RR.
\end{equation}
\begin{lemma} \label{JNSBL}
We have the estimate \begin{equation} \label{JNSB}
\|\mathsf{SB_{\RR}}\|_{p} \leq C  p\,\size(B)  |\sh(\RR)|^{\frac1p} , \qquad 2< p <\infty.
\end{equation}
\end{lemma}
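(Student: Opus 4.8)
The plan is to reduce Lemma \ref{JNSBL} to a Carleson embedding estimate for the ``Carleson masses'' $|b_R|^2$ supplied by the hypothesis $\size(B)<\infty$. By monotone convergence it suffices to treat finite collections $\RR$, so all sums below are finite; after a rotation the rectangles of $\RR$ are axis-parallel. Set $q=\tfrac p2\in(1,\infty)$. Squaring, the assertion \eqref{JNSB} is equivalent to
\[
\Big\| \sum_{R\in\RR}\frac{|b_R|^2}{|R|}\,\cic{1}_R \Big\|_{L^q}\;\lesssim\; q^2\,\size(B)^2\,|\sh(\RR)|^{\frac1q},
\]
because $\|\mathsf{SB}_\RR\|_p=\|\mathsf{SB}_\RR^2\|_q^{1/2}$, $\tfrac1{2q}=\tfrac1p$, and $\sqrt{q^2}=q\sim p$ produce exactly the linear factor $Cp$ in \eqref{JNSB}. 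By $L^q$--$L^{q'}$ duality, with $q'=(p/2)'$, it then suffices to show, for every nonnegative $\phi$,
\[
\sum_{R\in\RR}|b_R|^2\,\mathbb E_R\phi\;\lesssim\; q^2\,\size(B)^2\,\|\phi\|_{q'}\,|\sh(\RR)|^{\frac1q}.
\]

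First I would linearize the averages by level sets. Using $\mathbb E_R\phi\le 2\sum_{k\in\Z}2^k\,\cic{1}_{\{\mathbb E_R\phi>2^k\}}$ and putting $\RR_k=\{R\in\RR:\mathbb E_R\phi>2^k\}$, one gets
\[
\sum_{R\in\RR}|b_R|^2\,\mathbb E_R\phi\;\le\;2\sum_{k\in\Z}2^k\sum_{R\in\RR_k}|b_R|^2\;\le\;2\,\size(B)^2\sum_{k\in\Z}2^k\,|\sh(\RR_k)|,
\]
the last inequality being precisely the definition \eqref{sizedef} of $\size$ applied to the subcollection $\RR_k\subset\RR$. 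Next I would bound each shadow by a maximal function: if $x\in R$ for some $R\in\RR_k$, then the strong maximal function $M_{\mathrm s}=M^{(1)}\!\circ M^{(2)}$ (the composition of the one-dimensional Hardy--Littlewood maximal operators in the two coordinates) satisfies $M_{\mathrm s}\phi(x)\ge\mathbb E_R\phi>2^k$; hence $\M_\RR\phi\le M_{\mathrm s}\phi$ pointwise and $\sh(\RR_k)\subset\sh(\RR)\cap\{M_{\mathrm s}\phi>2^k\}$. Summing the geometric series in $k$ then yields
\[
\sum_{k\in\Z}2^k\,|\sh(\RR_k)|\;\lesssim\;\int_{\sh(\RR)}M_{\mathrm s}\phi .
\]

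To close, I would use H\"older's inequality together with the $L^{q'}$ boundedness of $M_{\mathrm s}$:
\[
\int_{\sh(\RR)}M_{\mathrm s}\phi\;\le\;\|M_{\mathrm s}\phi\|_{q'}\,|\sh(\RR)|^{\frac1q}\;\le\;\|M^{(1)}\|_{q'\to q'}\|M^{(2)}\|_{q'\to q'}\,\|\phi\|_{q'}\,|\sh(\RR)|^{\frac1q}\;\lesssim\;q^2\,\|\phi\|_{q'}\,|\sh(\RR)|^{\frac1q},
\]
invoking the classical bound $\|M^{(i)}\|_{L^{q'}\to L^{q'}}\lesssim(q')'=q$. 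Chaining the three displays gives the dualized estimate, hence \eqref{JNSB}.

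I do not anticipate a genuine obstacle: this is the standard Carleson embedding computation. The only point demanding care is tracking the dependence on $p$ — the factor $q^2$ is forced by the operator norm $\sim q\sim p$ of the one-dimensional maximal function entering once per parameter, and it collapses to the linear factor $Cp$ of \eqref{JNSB} only after taking the square root. It is also worth noting that, unlike in Proposition \ref{JNpp}, neither the (product) wave-packet structure nor the support hypothesis \eqref{supp} on the $\psi_R$ plays any role here: $\mathsf{SB}_\RR$ depends only on the coefficients $\{b_R\}$ and the rectangles, so Lemma \ref{JNSBL} is a purely real-variable statement, which is why it serves as the first ingredient of Proposition \ref{JNpp}.
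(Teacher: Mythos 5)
Your proof is correct, and it reaches \eqref{JNSB} by a genuinely different route than the paper. The paper dualizes and then runs an iterative stopping-time (good-$\lambda$ style) decomposition: given $\RR'$ it extracts a bad subcollection $\RR''$ with $|\sh(\RR'')|\le\frac12|\sh(\RR')|$ using the weak-type bound $\|\M_\RR\|_{q'\to q',\infty}\lesssim q$ (itself obtained by interpolating the trivial $L^\infty$ bound with the Jessen--Marcinkiewicz--Zygmund $L\log L$ endpoint), and sums a geometric series to accumulate the factor $q\cdot K_q\lesssim q^2$. You instead perform a single layer-cake decomposition of the dual pairing, $\mathbb E_R\phi\lesssim\sum_k 2^k\cic{1}_{\{\mathbb E_R\phi>2^k\}}$, feed each level $\RR_k$ directly into the definition of $\size$, and then convert $\sum_k 2^k|\sh(\RR_k)|$ into $\int_{\sh(\RR)}M_{\mathrm s}\phi$ via the pointwise domination $\M_\RR\le M^{(1)}\circ M^{(2)}$, closing with H\"older and the \emph{strong}-type bound $\|M^{(i)}\|_{L^{q'}\to L^{q'}}\lesssim q$. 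Both arguments land on the same constant $q^2\sim p^2$ before the square root, but the accounting is different: the paper pays $q$ through the weak-type maximal norm and another $q$ through the geometric series $\sum 2^{-k/q}$, whereas you pay $q^2$ all at once through the iterated maximal function. Your version is somewhat shorter, avoids the interpolation needed to control $\|\M_\RR\|_{q'\to q',\infty}$, and makes transparent the observation you correctly stress at the end, namely that Lemma \ref{JNSBL} is a pure real-variable Carleson embedding in which the wave-packet structure and the support hypothesis \eqref{supp} on the $\psi_R$ play no role; the paper's proof of this particular lemma does not use them either, but this is less visible in the stopping-time formulation.
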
 \noindent
Second, we  obtain  a sharp (with respect to $p$) norm comparison between $\mathsf{B}_\RR$ and $\mathsf{SB}_\RR$.
 \begin{lemma} \label{B-SBL}
We have the estimate \begin{equation} \label{B-SB}
\|\mathsf{B}_{\RR}\|_{p} \leq C  p \|\mathsf{SB}_\RR\|_p, \qquad 2\leq p <\infty.
 \end{equation}
\end{lemma}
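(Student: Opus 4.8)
The plan is to prove the norm comparison $\|\mathsf{B}_\RR\|_p \leq Cp\|\mathsf{SB}_\RR\|_p$ by a Rubio de Francia–type argument, i.e. by duality against $L^{p'}$ and a stopping time that exploits the support condition \eqref{supp} together with the product wave packet structure. First I would fix $g \in L^{p'}$ with $\|g\|_{p'}=1$ and estimate $\int \mathsf{B}_\RR \bar g = \sum_{R\in\RR} b_R \int \psi_R \bar g$. Since each $\psi_R$ is supported in $R$ and is $L^2$-adapted with mean zero in both coordinate directions, the quantity $\int \psi_R \bar g$ is controlled by a \emph{product square-function-type} average of $g$ over $R$: more precisely, by the Cauchy–Schwarz inequality in the pair $(b_R/|R|^{1/2}, |R|^{1/2}\int\psi_R\bar g)$ one reduces matters to bounding $\sum_R |R| \,|\langle \psi_R, g\rangle|^2$ against $\|\mathsf{SB}_\RR\|_p \cdot (\text{something involving } g)$. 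The crux is then to show
\begin{equation}
\label{planaux}
\sum_{R \in \RR} |R|\, |\langle \psi_R, g\rangle|^2 \lesssim p^2 \int_{\sh(\RR)} \big(\mathsf{SB}_\RR(x)\big)^0 \cdots
\end{equation}
— schematically, that the "energy" of $g$ seen through the product wave packets $\{\psi_R\}$ on the shadow is bounded by a product-Carleson/$L^{p'}$ estimate with the linear-in-$p$ loss coming from the $L^{p'}$ norm of the product maximal function (or the dyadic product square function $S_{\mathrm{prod}}g$), whose operator norm on $L^{p'}$ is $O(p)$ since $p' \to 1$.

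The key steps, in order: (1) Dualize and write the pairing as a sum over $R\in\RR$; group the rectangles by the \emph{dyadic grid} structure of $\I_\RR,\mathcal J_\RR$ so that the intervals $I_R$ (resp. $J_R$) come from $\D$. (2) For each $R$, use the product mean-zero property \eqref{meanzero} and $L^2$-adaptedness to dominate $|\langle\psi_R,g\rangle|$ by $|R|^{1/2}$ times a localized product-oscillation average of $g$ at the scales of $R$; this is the standard "wave packet vs. martingale difference" comparison, and it is where the bi-parameter structure enters. (3) Sum the squares using the almost-orthogonality of the two-parameter family: the contributions organize into a product Littlewood–Paley square function of $g$, so $\sum_R |R||\langle\psi_R,g\rangle|^2 \lesssim \|S_{\mathrm{prod}}\,g\|$-type quantity, pointwise dominated on $\sh(\RR)$. (4) Re-insert the coefficients $b_R$: Cauchy–Schwarz gives $|\int\mathsf{B}_\RR\bar g| \leq \|\mathsf{SB}_\RR\|_p \big\|(\sum_R|R||\langle\psi_R,g\rangle|^2 |R|^{-1}\cic 1_R)^{1/2}\big\|_{p'}$, and the last factor is $\lesssim p\|g\|_{p'}$ by the $O(p)$ bound for the product square function (equivalently product maximal function) on $L^{p'}$, $1<p'<2$, via Fefferman–Stein / product $A_\infty$ theory. (5) Collect: $\|\mathsf{B}_\RR\|_p \lesssim p\|\mathsf{SB}_\RR\|_p$, which is \eqref{B-SB}.

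The main obstacle is step (3)–(4): controlling the dual pairing by the product square function with a loss linear (not worse) in $p$. The bi-parameter setting is the difficulty — one cannot simply invoke the one-parameter Chang–Wilson–Wolff or weighted Hilbert transform bounds rectangle by rectangle; one must use the product version (this is why the paper sets up $\size$ as a product-BMO surrogate and proves the product John–Nirenberg inequality in Proposition \ref{JNpp}). In practice I expect the argument to run by first proving an $L^2$ version (where $\|\mathsf{B}_\RR\|_2 \sim \|\mathsf{SB}_\RR\|_2$ by orthogonality of the $\psi_R$ up to the sparsity/grid overlap bounds), and then extrapolating to $L^p$ via the product Chang–Wilson–Wolff inequality (Proposition \ref{f-Delta}, referenced just below), which upgrades the square-function control to all $p\geq 2$ with the sharp $p$-dependence; the sharpness in $p$ is exactly what forces one to track constants carefully through the good-$\lambda$ / exponential-square machinery rather than using a lossy interpolation.
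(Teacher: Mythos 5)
Your duality plan circles around the right neighborhood but never pins down the concrete mechanism, which is much shorter than what you sketch. The paper applies Proposition \ref{f-Delta} \emph{directly} to $f=\mathsf{B}_\RR$, yielding $\|\mathsf{B}_\RR\|_p\leq Cp\|\Delta_{12}\mathsf{B}_\RR\|_p$, and then proves the \emph{pointwise} domination $(\Delta_{12}\mathsf{B}_\RR)(x)\leq C\,\mathsf{SB}_\RR(x)$ for $x\in\sh(\RR)$; this pointwise bound is the entire content of the lemma, and your plan never states it. It follows from two elementary Haar coefficient estimates that you do not write down: (i) $\l\psi_R,h_Q\r=0$ whenever $Q\not\subset R$, because under the compact support hypothesis \eqref{supp} the condition $Q\cap R\neq\emptyset$, $Q\not\subset R$ forces $I_R\subsetneq I_Q$ or $J_R\subsetneq J_Q$, and on such a rectangle $h_Q$ is constant along the relevant fibers while the product mean-zero property \eqref{meanzero} kills the integral of $\psi_R$; and (ii) $|\l\psi_R,h_Q\r|\lesssim(|Q|/|R|)^{3/2}$ for $Q\subset R$, again from mean zero plus smoothness. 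Expanding $(\Delta_{12}\mathsf{B}_\RR)^2$ in Haar coefficients and applying Cauchy--Schwarz over the rectangles $R\supseteq Q$ then gives $(\Delta_{12}\mathsf{B}_\RR)^2\lesssim\sum_R\frac{|b_R|^2}{|R|}\cic{1}_R=(\mathsf{SB}_\RR)^2$, which is \eqref{PWB}, and you are done.

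The dual route you sketch also has a genuine constants problem at precisely the step you flag as the crux. You need the product square function of the dual function $g$ (or the strong maximal function) to have operator norm $O(p)=O((p'-1)^{-1})$ on $L^{p'}$ as $p'\to1^+$. But the strong maximal function on $L^{p'}(\R^2)$ costs $O((p'-1)^{-2})$ there, and iterating the one-parameter square function bound coordinate by coordinate incurs the same squared loss; there is no obvious way to beat $O(p^2)$ by the route you describe. That would still be harmless downstream for Proposition \ref{JNpp}, which tolerates $p^2$, but it would not prove Lemma \ref{B-SBL} as stated with constant $Cp$. The paper's direct application of Proposition \ref{f-Delta} --- itself obtained by vectorizing a one-parameter Chang--Wilson--Wolff bound via the Rubio de Francia weight trick, not by naive iteration --- is exactly what delivers the single power of $p$, and the remaining work is the elementary pointwise estimate above.
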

Thus, \eqref{JNp} is obtained by applying  Lemmata \ref{JNSBL} and \ref{B-SBL}. Then,   \eqref{JNexp} follows immediately from \eqref{JNp} by extrapolation.

The proofs of Lemma \ref{JNSBL} and
  Lemma \ref{B-SBL} are given at the end of this section.  Lemma \ref{B-SBL} will be obtained as a consequence
  of the product Chang-Wilson-Wolff inequality of \cite{PIPHER}, which we detail in Proposition \ref{f-Delta} in the    form of sharp (with respect to $p$, as $p \to \infty$) $L^p$ bounds between $f$ and its product martingale square function, following the approach of \cite{PF}.   We recall some notation.
For $I \in \D$, $h_I$ denotes the $L^2$ normalized Haar wavelet on $I$. For a rectangle $Q=I\times J$, $h_Q$ stands for the tensor product wavelet $h_I \otimes h_J$. We introduce the one and two-parameter martingale square functions
\begin{equation} \label{MSF}
(\Delta_1 f) (x_1) =  \bigg(\sum_{I \in \D} |\l f, h_I \r|^2 \frac{\cic{1}_I (x_1)}{|I|}  \bigg)^{\frac12}, \quad(\Delta_{12}) f(x)\bigg(\sum_{Q \in \D\times \D} |\l f, h_Q \r|^2 \frac{\cic{1}_Q (x)}{|Q|}\bigg)^{\frac12}.\end{equation}
\begin{proposition} \label{f-Delta}
We have the sharp inequality
\begin{equation} \label{f-Delta1}
\|f\|_p \leq C p\|\Delta_{12} f\|_p, \qquad 2\leq p < \infty.\end{equation}
\end{proposition}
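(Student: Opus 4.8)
The plan is to deduce Proposition~\ref{f-Delta} from the corresponding one-parameter Chang--Wilson--Wolff inequality applied iteratively in each variable, tracking the dependence on $p$ sharply. Recall that in one parameter the sharp estimate $\|g\|_p\leq Cp\,\|\Delta g\|_p$ for $2\leq p<\infty$ follows from the good-$\lambda$ inequality of Chang--Wilson--Wolff relating a martingale to its square function, extrapolated following the approach of \cite{PF} (which is precisely the $A_2$/Rubio de Francia style argument used above for $T_\star$). The two-parameter square function factors as $\Delta_{12}=\Delta_1\Delta_2$ in the sense that, for each fixed $x_2$, the function $x_1\mapsto \l f(\cdot,x_2),h_I\r_{x_1}$ has its one-parameter martingale square function in $x_1$ controlled pointwise by a slice of $\Delta_{12}f$, and symmetrically in $x_2$.

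The key steps, in order, are as follows. First, apply the one-parameter sharp inequality in the $x_1$ variable, with $x_2$ frozen: this gives $\|f(\cdot,x_2)\|_{L^p_{x_1}}\leq Cp\,\|\Delta_1 f(\cdot,x_2)\|_{L^p_{x_1}}$, and raising to the $p$-th power and integrating in $x_2$ yields $\|f\|_p\leq Cp\,\|\Delta_1 f\|_p$ where $\Delta_1$ acts only on the first variable. Second, observe that $\Delta_1 f$, as a function of $x_2$ for each fixed $x_1$, has the property that its one-parameter martingale square function in $x_2$ is exactly (a pointwise rearrangement of) $\Delta_{12}f$; this is the standard identity $\Delta_2(\Delta_1 f)=\Delta_{12}f$ at the level of Haar coefficients, since $h_Q=h_I\otimes h_J$ and the coefficients $\l f,h_Q\r$ are the $x_2$-Haar coefficients of $x_1\mapsto\l f(\cdot,x_2),h_I\r$. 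Third, apply the one-parameter sharp inequality again, now in $x_2$, to $\Delta_1 f$: this gives $\|\Delta_1 f\|_p\leq Cp\,\|\Delta_{12}f\|_p$. Composing the two bounds produces $\|f\|_p\leq C^2p^2\,\|\Delta_{12}f\|_p$, which is \emph{not} the claimed $Cp$ bound. To recover the linear dependence on $p$ one must instead run the two-parameter extrapolation directly from the product good-$\lambda$ inequality of \cite{PIPHER}, feeding it through the Rubio de Francia iteration scheme exactly once (not once per variable), so that only a single factor of $\|M\|_{q\to q}\lesssim p$ is paid; this is the point of invoking \cite{PF} rather than brute-force iteration.

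The main obstacle, therefore, is organizing the extrapolation so that the final constant is $Cp$ and not $Cp^2$: the naive iteration of the one-parameter result loses an extra power of $p$, and one genuinely needs the product form of the Chang--Wilson--Wolff good-$\lambda$ inequality (which bounds the distribution function of $f$ against that of $\Delta_{12}f$ directly, with product-BMO-type oscillation control) together with a single application of Rubio de Francia's algorithm using the bound $\|\mathcal M_{\mathrm{strong}}\|_{q\to q}\lesssim q^2$ — or, carried out carefully as in \cite{PF}, a $q$-linear bound on a suitable iterated maximal operator — to land the sharp exponent. I would also remark that sharpness (the impossibility of replacing $Cp$ by $o(p)$ as $p\to\infty$) is seen by testing against lacunary Fourier series, exactly as in the one-parameter case, so the statement ``sharp'' in \eqref{f-Delta1} refers to this growth rate.
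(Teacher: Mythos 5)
Your proposal is built around an incorrect value of the sharp one-parameter constant, and this error propagates through the whole plan. The sharp one-parameter Chang--Wilson--Wolff estimate is $\|g\|_p \le C\sqrt{p}\,\|\Delta g\|_p$ for $2\le p<\infty$, \emph{not} $Cp$. The $\sqrt p$ growth is exactly the subgaussian rate one expects from the $\e^{-c/\gamma^2}$ decay in the good-$\lambda$ inequality, and it is already saturated by a single lacunary Haar series (or, equivalently, by Khintchine's inequality applied to a Rademacher sum). Consequently, the ``naive'' two-step iteration you dismiss produces a constant of order $\sqrt p\cdot\sqrt p = p$, which is precisely the claim of \eqref{f-Delta1}. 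The paper's proof is exactly this iteration, so your worry that iteration loses a power and necessitates a genuinely two-parameter extrapolation is unfounded, and your proposed workaround via a direct product good-$\lambda$ extrapolation is unnecessary for the stated bound.

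There is a second, more technical gap in your plan. You assert the identity $\Delta_2(\Delta_1 f)=\Delta_{12}f$ ``at the level of Haar coefficients,'' but this is not a pointwise identity: $\Delta_1 f$ is already a square root of a sum over $I$, and applying $\Delta_2$ to it does not commute with the $\ell^2(I)$-summation. What is true, and what the paper uses, is the \emph{vector-valued} formulation: setting $F_I(x_1,x_2)=\l f(\cdot,x_2),h_I\r h_I(x_1)$ one has $\Delta_{12}f = \|\Delta_2 F_I\|_{\ell^2(I)}$, and the iteration then requires the $\ell^2$-valued one-parameter inequality \eqref{f-Delta2} (with constant $Cp^{1/2}$), not merely its scalar case. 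The paper obtains \eqref{f-Delta2} from the $A_1$-weighted $L^2$ bound \eqref{f-Delta3} by a single Rubio de Francia iteration, paying one factor $\|M\|_{q\to q}\lesssim p$ (with $q=(p/2)'$) and hence $p^{1/2}$ after taking square roots; a second application at the scalar level in the remaining variable contributes the other $p^{1/2}$. So the two-parameter extrapolation you propose is not what the paper does, and the mechanism by which $Cp$ is achieved is the $\sqrt p\cdot\sqrt p$ composition, not a single two-parameter pass.
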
 \noindent
The proof of Proposition \ref{f-Delta} is outlined in Subsection \ref{ss45}.

\begin{proof}[Proof of Lemma \ref{JNSBL}]  Let $p>2$ and $q=\frac p2$. It then suffices to show that
 \begin{equation}
\|(\mathsf{SB}_\RR)^2\|_{L^q(\sh(\RR))} \leq C  q\|\M_\RR\|_{q'\to q',\infty}  \size(B)^2 |\sh(\RR)|^\frac1q, \qquad 1\leq q <\infty.
\label{JNSB-1}
\end{equation}
Indeed, the bound $\|\M_\RR\|_{q'\to q',\infty} \leq Cq$ follows immediately via weak-type interpolation of the trivial $L^\infty$ bound with the endpoint for the strong maximal function on $\R^2$ [JMZ]
 \begin{equation}
\big|\big\{ x \in \sh (\RR): \M_\RR f(x) > \lambda \big\}\big| \leq C\int_{\sh (\RR)} \frac{|f(x)|}{\lambda} \log \Big( \e + \frac{|f(x)|}{\lambda}\Big)\, \d x.
\label{JNSB-2}
\end{equation} For simplicity, denote $K_q:=\|\M_\RR\|_{q'\to q',\infty}$ . To prove \eqref{JNSB-1}, we assume the following claim. \vskip1.5mm \noindent \textsc{Claim.} For each $\RR' \subset \RR$ there exists $\RR''\subset \RR'$ with
\begin{equation} \label{JN-induct}
  |\sh(\RR'')| \leq \frac{ |\sh(\RR')|}{2}, \qquad
\|(\mathsf{SB}_{\RR'})^2\|_q \leq  2^{\frac{1}{q'}}K_q  \size(B)^2|\sh(\RR')|^{\frac{1}{q}} + \|(\mathsf{SB}_{\RR''})^2\|_q.
\end{equation}
With the claim in hand, we can easily complete the proof. Apply the claim to $\RR'=\RR$; this yields an $
\RR'':= \RR_1$ with $\sh(\RR_1)\leq 2^{-1}\sh(\RR) $. Apply the claim with $\RR'= \RR_1$. This yields an $\RR''=\RR_2$ with  with $\sh(\RR_1)\leq 2^{-2}\sh(\RR) $. Iterating, one finally gets
\begin{equation} \label{JN-induct2}
\|(\mathsf{SB}_{\RR'})^2\|_q \leq  CK_q   \size(B)^2|  \sum_{k=0}^\infty 2^{-\frac k q}|\sh(\RR)|^{\frac{1}{q}} \leq \frac{CK_q}{1-2^{-\frac1q}}\size(B)^2|\sh (\RR)|^{\frac1q}
\end{equation}
and the proposition follows from the fact that $1-2^{-\frac1q}  \geq cq^{-1}$.

\noindent
The claim is proven  using  duality. By scaling, we can assume $\size(B)=1$, which simplifies the notation. We  choose $g \in L^{q'}, \|g\|_{q'}=1$, so that $\|(\mathsf{SB}_{\RR'})^2\|_q=\l  (\mathsf{SB}_{\RR'})^2,g \r$, and set
$$
\RR''=\big\{R \in \RR': \textstyle  \E_\RR g \geq 2^{\frac{1}{q'}}K_q |\sh( \RR')|^{-\frac{1}{q'}}\big\}.$$
By the bound on $\mathsf{M}_\RR$, we have that
$$
 |\sh(\RR'')| \leq \frac12 |\sh(\RR')| \textrm{ and }  \forall \, R \in \RR'\backslash \RR'' \textstyle,\, \E_\RR g  <  2^{\frac{1}{q'}}K|\sh( \RR')|^{-\frac{1}{q'}}.
$$
Therefore
\begin{align*}
\|(\mathsf{SB}_{\RR'})^2\|_q&=  \l  (\mathsf{SB}_{\RR'})^2,g \r \\ &=
\Big(\sum_{ R \in \RR'\backslash \RR''}|b_R|^2   \E_\RR g \Big) + \|(\mathsf{SB}_{\RR''})^2\|_q\|g\|_{q'} \\ & \leq \Big(\sup_{R \in  \RR'\backslash \RR''}   \E_\RR g \Big)\Big(\sum_{R\in \RR'\backslash \RR''}|b_R|^2    \Big) + \|(\mathsf{SB}_{\RR''})^2)\|_q
\\ & \leq\textstyle CK_q  |\sh(\RR')|^{1-\frac{1}{q'}}  + \| (\mathsf{SB}_{\RR'})^2\|_q,
\end{align*}
and this completes the proof of the claim, and in turn, of Lemma \ref{JNSBL}.
\end{proof}
\begin{proof}[Proof of Lemma \ref{B-SBL}]  The lemma is an immediate consequence of Proposition \ref{f-Delta} applied to $f= \mathsf{B}_\RR$, and of the pointwise
bound
\begin{equation} \label{PWB}
(\Delta_{12} \mathsf{B}_\RR) (x) \leq C \mathsf{SB}_\RR(x), \qquad x \in \sh(\RR).
\end{equation}
Let us prove \eqref{PWB}, by computing $\Delta_{12} \mathsf{B}_\RR$ explicitly. Observe that \begin{equation} \label{zeroQR}  Q \not\subset R \implies  \l\psi_R, h_Q \r=0.  \end{equation}
Indeed, if $Q=I_Q \times J_Q $ is not contained in $R=I_R \times J_R$, and $Q\cap R \neq \emptyset$, we have that either $I_R \subsetneq I_Q$, or $J_R \subsetneq J_Q$ (or both). Then \eqref{zeroQR} is a consequence of $\psi_R$   having mean zero (see \eqref{meanzero}), and $h_Q$ being constant, along each horizontal (resp.\ vertical) line. Moreover, again as a consequence of \eqref{meanzero} and of the smoothness of $\psi_R$,
\begin{equation}  Q \subset R \implies   |\l\psi_R, h_Q \r| \lesssim  \Big(\frac{|Q|}{|R|}\Big)^{\frac32}.\label{estQR}
\end{equation}
Therefore \begin{align*} [(\Delta_{12} \mathsf{B}_R)(x)] ^2 & = \sum_{Q \in \D\times \D} \bigg( \sum_{R \in \RR: R\supseteq Q}b_R  \l\psi_R, h_Q \r   \bigg)^2 \frac{\cic{1}_Q(x)}{|Q|} \\ & \lesssim  \sum_{Q \in \D\times \D} \bigg( \sum_{R \in \RR: R\supseteq Q}\frac{b_R}{\sqrt{|R|}} \frac{|Q|}{|R|} \bigg)^2  \cic{1}_Q (x) \\ & \lesssim \sum_{Q \in \D\times \D} \bigg( \sum_{R \in \RR: R\supseteq Q}\frac{|b_R|^2}{ |R|} \frac{|Q|}{|R|} \bigg) \bigg( \sum_{R \in \RR: R\supseteq Q}  \frac{|Q|}{|R|} \bigg) \cic{1}_Q(x) \\ & \lesssim \sum_{R \in \RR} \frac{|b_R|^2}{|R|} \bigg( \sum_{k,\ell\geq 0}  \sum_{\substack{  Q\subseteq R\\ |I_Q| = 2^{-k} |I_R|\\ |J_Q| = 2^{-\ell} |J_R| } } 2^{-k} 2^{-\ell}\cic{1}_Q(x) \bigg) \lesssim \sum_{R \in \RR} \frac{|b_R|^2}{|R|} \cic{1}_R(x) = [\mathsf{SB}_{\RR}(x)]^2,
\end{align*}
which is exactly \eqref{PWB}.
\end{proof}
\subsection{Proof of Proposition \ref{f-Delta}}
\label{ss45} We follow the approach of \cite{PF}. The bound \eqref{f-Delta1} is obtained as a consequence of the vector-valued inequality for the one parameter square function
\begin{equation}   \label{f-Delta2}
\big\|\| ( g_I )\|_{\ell^2(I)}\big\|_{L^p_{x_2}} \leq   Cp^{\frac12} \|\|  \Delta_2 g_I \|_{\ell^2(I)}\big\|_{L^p_{x_2}}, \qquad 2\leq p<\infty.
\end{equation}
Consider the $\ell^2$-valued function
$
F_I(x_1,x_2) = \l f(\cdot,x_2), h_I \r h_I(x_1).
$
Then $\Delta_{12} f = \|\Delta_2 F_I\|_{\ell^2(I)} $,  so that
\begin{align*}
\|\Delta_{12} f\|_p & = \big\|\| (\Delta_2 F_I)\|_{\ell^2(I)}\big\|_{p} = \Big\| \big\|\|  \Delta_2 F_I (x_1,\cdot)\|_{\ell^2(I)}\big\|_{L^p_{x_2}} \Big\|_{L^p_{x_1}} \\ & \geq C^{-1}p^{-\frac12} \Big\| \big\|\|  F_I (x_1,\cdot)\|_{\ell^2(I)}\big\|_{L^p_{x_2}} \Big\|_{L^p_{x_1}} =  C^{-1}p^{-\frac12} \Big\| \big\|\|  F_I (x_1,\cdot)\|_{\ell^2(I)}\big\|_{L^p_{x_1}} \Big\|_{L^p_{x_2}} \\ & =
C^{-1}p^{-\frac12} \Big\| \big\|\Delta_1 f(\cdot,x_2)\big\|_{L^p_{x_1}} \Big\|_{L^p_{x_2}} \geq C^{-2}p^{-1} \Big\| \big\|  f(\cdot,x_2)\big\|_{L^p_{x_1}} \Big\|_{L^p_{x_2}}   =C^{-2}p^{-1}\|f\|_p.
\end{align*}
We used the vector-valued bound \eqref{f-Delta2} to get to the second line, and the scalar valued bound to obtain the last inequality. We have thus proved Proposition \ref{f-Delta}, up to showing that \eqref{f-Delta2} holds.  This is done by applying the Rubio De Francia trick to the weighted inequality
\begin{equation}   \label{f-Delta3}
\int_{\R}\|f_k\|_{\ell^2(k)}^2 w(x_2) \, \d x_2 \leq C [w]_{A_1}  \int_{\R}\|\Delta_2 f_k\|_{\ell^2(k)}^2 w(x_2) \, \d x_2,
\end{equation}
which is obviously equivalent to the scalar case. In turn, the scalar case of \eqref{f-Delta3} follows from the Chang-Wilson-Wolff good-$\gamma$ inequality \cite{CWW}
$$
\big|\big\{ x_2: \sup_{x_ 2 \in I}\textstyle  |\E_I f|>2\lambda, \Delta_2 f(x_2) \leq \gamma \lambda\big\} \big| \leq C\e^{-\frac{c}{\gamma^2}}\big|\big\{ x_2: \displaystyle\sup_{x_ 2 \in I}\textstyle  |\E_I f| > \lambda \big\} \big|.
$$
One chooses $\frac{1}{\gamma^2} =c  [w]_{A_1}$ and recalls the following property of $A_1$ weights: there exists $c>0$ such that
$$
E\subset I , \, |E| \leq \e^{-c[w]_{A_1}} |I| \implies w(E) \leq \textstyle \frac{1}{8}w(Q).
$$
This yields a weighted good-lambda inequality whose integration gives (the scalar case of) \eqref{f-Delta3}. See \cite{PF} for details.
\subsection{Dropping the compact support assumption} \label{ss46}  Let   $\RR$ be as above, and  now  let   $\{\psi_R\}_{R \in \RR}$  be product  wave packets $L^2$- adapted to $\RR$ for which we no longer assume \eqref{supp}. Let us prove that \eqref{JNp} continues to hold. The main tool
is the following lemma, which is a variant of \cite[Lemma 3.1]{MPTT}.
\begin{lemma} \label{getcptsp}
Let $\psi_R$ be  a function $L^2$ adapted to the rectangle $R$  and  having mean zero along each line parallel to the coordinate axes of $R$, as in \eqref{meanzero}.
 Then
\begin{equation} \label{sect6-cpteq} \psi_R(x_1,x_2)=\sum_{k\geq 0} 2^{-100k} \psi_{R^{(k)}}(x_1,x_2),
\end{equation}
 whre for each $k \geq 0$,   $\{\psi_{R^{(k)}}\}$ is  a collection of product wave packets adapted to $\{R^{(k)}: R \in \RR\}$, with $\mathrm{supp}\, \psi_{R^{(k)}} \subset R^{(k)}.$ The adaptation constants depend  only on the adaptation constants of $\psi_R$ (in particular, they do not depend on $k$). \end{lemma}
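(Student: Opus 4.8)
The plan is to localize $\psi_R$ by a smooth partition of $\R^2$ into dyadic shells concentric with $R$, and then to run a correction step that restores the line-by-line mean zero property \eqref{meanzero} after the (mean-zero--destroying) localization; this is the bi-parameter version of the argument behind \cite[Lemma 3.1]{MPTT}. First I would rescale and translate so that, in the coordinates $(e_R,e_R^\perp)$, $R=[-\tfrac12,\tfrac12]^2$, and fix a smooth bump $\phi_0$ with $\phi_0\equiv 1$ on $\tfrac12 R$ and $\supp\phi_0\subset R$. Setting $\phi_k=\phi_0(2^{-k}\cdot)$, $\chi_0=\phi_0$ and $\chi_k=\phi_k-\phi_{k-1}$ for $k\ge 1$, one has $\sum_{k\ge 0}\chi_k\equiv 1$, with $\chi_k$ supported in the shell $R^{(k)}\setminus R^{(k-2)}$ and smooth at scale $\sim 2^k$. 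Thus $\psi_R=\sum_{k\ge 0}\psi_R\chi_k$, each $\psi_R\chi_k$ is supported in $R^{(k)}$, and by the Leibniz rule together with the polynomial decay in \eqref{L2-ad} we get $|\partial_{x_1}^\alpha\partial_{x_2}^\beta(\psi_R\chi_k)|\lesssim_{\alpha,\beta}2^{-Kk}$ in the rescaled coordinates, which is smaller by any prescribed power of $2^k$ than the normalization adapted to $R^{(k)}$, provided the decay exponent $K$ in \eqref{L2-ad} is taken large (this is the only place the largeness of $K$, equivalently the Schwartz character of $\psi_R$, enters). The obstruction to taking $2^{100k}\psi_R\chi_k$ as the desired $\psi_{R^{(k)}}$ is that multiplication by $\chi_k$ destroys \eqref{meanzero}.

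To remove this obstruction I would introduce the defect data
$$
a_k(x_2)=\int_\R(\psi_R\chi_k)(x_1,x_2)\,\d x_1,\qquad b_k(x_1)=\int_\R(\psi_R\chi_k)(x_1,x_2)\,\d x_2,\qquad \mu_k=\iint_{\R^2}\psi_R\chi_k,
$$
noting that $a_k$ is supported in $\{|x_2|\lesssim 2^k\}$, $b_k$ in $\{|x_1|\lesssim 2^k\}$, and that $a_k,b_k$ (and their derivatives) as well as $\mu_k$ are $O(2^{-(K-1)k})$; since $\psi_R$ integrates to zero along every horizontal and vertical line, summing the (absolutely convergent) series over $k$ gives $\sum_k a_k\equiv 0$, $\sum_k b_k\equiv 0$ and $\sum_k\mu_k=0$. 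Fix once and for all a bump $\eta$ supported in $\tfrac12 R$ (in each variable) with unit integral, and set
$$
g_k(x_1,x_2)=a_k(x_2)\,\eta(x_1)+b_k(x_1)\,\eta(x_2)-\mu_k\,\eta(x_1)\eta(x_2).
$$
A one-line computation gives $\int_\R g_k\,\d x_1=a_k(x_2)$ and $\int_\R g_k\,\d x_2=b_k(x_1)$, so that $\Psi_k:=\psi_R\chi_k-g_k$ satisfies \eqref{meanzero}; simultaneously $\supp g_k\subset R^{(k)}$, hence $\supp\Psi_k\subset R^{(k)}$, and $\sum_k g_k\equiv 0$, hence $\sum_k\Psi_k=\psi_R$. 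Finally $\Psi_k$ inherits $|\partial_{x_1}^\alpha\partial_{x_2}^\beta\Psi_k|\lesssim_{\alpha,\beta}2^{-(K-1)k}$, which after multiplying by $2^{100k}$ is dominated by the normalization adapted to $R^{(k)}$ once $K$ exceeds $101$ plus the order of the derivatives under consideration; undoing the rescaling and setting $\psi_{R^{(k)}}:=c\,2^{100k}\Psi_k$ (with $c$ absolute) yields $\psi_R=\sum_{k\ge 0}2^{-100k}\psi_{R^{(k)}}$ with each $\psi_{R^{(k)}}$ a product wave packet supported in $R^{(k)}$. Since every step uses only the fixed bumps $\phi_0,\eta$ rescaled to $R$, the adaptedness constants of the $\psi_{R^{(k)}}$ depend only on those of $\psi_R$ (and not on $k$), uniformly over a family $\RR$.

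The step I expect to be the genuine obstacle is the mean zero correction, not the decay bookkeeping. One must absorb \emph{simultaneously} the horizontal defect $b_k(x_1)$, the vertical defect $a_k(x_2)$, and their overlap $\mu_k$ into correction terms that (i) are supported in the correct dilate $R^{(k)}$, (ii) are pointwise dominated by the same rapidly decaying constants as $\psi_R\chi_k$, and (iii) telescope in $k$ so that they do not alter the total sum; the product (bi-parameter) structure is exactly what forces the three-term form of $g_k$. Everything else -- the support tracking, the size estimates, and the verification that $\psi_R\chi_k$ and the pieces of $g_k$ are $L^2$-adapted -- is a routine application of the Leibniz rule and of the polynomial decay in \eqref{L2-ad}.
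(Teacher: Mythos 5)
Your proof is correct, and it is precisely the bi-parameter adaptation of the argument from \cite[Lemma~3.1]{MPTT} that the paper has in mind: the paper itself gives no proof of this lemma, only the citation, so you have reconstructed the intended argument. The dyadic shell partition $\psi_R=\sum_k\psi_R\chi_k$ and the observation that the rapid decay of $\psi_R$ in the $(K)$th power makes each piece a tiny multiple of an adapted packet are routine; the step that carries the real content, as you correctly flag, is the restoration of the line-by-line mean zero property after truncation, and your three-term tensor correction $g_k=a_k\otimes\eta+\eta\otimes b_k-\mu_k\,\eta\otimes\eta$ is exactly right. The identities $\int g_k\,\d x_1=a_k(x_2)$ and $\int g_k\,\d x_2=b_k(x_1)$ (which hinge on $\int b_k\,\d x_1=\int a_k\,\d x_2=\mu_k$ and $\int\eta=1$) make $\Psi_k=\psi_R\chi_k-g_k$ satisfy \eqref{meanzero}, the supports of $a_k,b_k$ in $\{|x_2|\le 2^{k-1}\}$, $\{|x_1|\le 2^{k-1}\}$ keep $g_k$ inside $R^{(k)}$, and the telescoping $\sum_k a_k\equiv\sum_k b_k\equiv 0$, $\sum_k\mu_k=0$ (from the product mean zero of $\psi_R$ and $\sum_k\chi_k\equiv1$, with absolute convergence supplied by the decay) ensures $\sum_k\Psi_k=\psi_R$. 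The only soft spot is the bookkeeping around the constant $100$: after renormalization by $2^{100k}$, the derivative estimate for $\partial^\alpha\partial^\beta\Psi_k$ dominates the $R^{(k)}$-adapted normalization only when the decay exponent in \eqref{L2-ad} exceeds roughly $102+\alpha+\beta$, so, as you acknowledge, one must take the exponent $K$ in \eqref{L2-ad} large relative to the finite range of $(\alpha,\beta)$ one needs (equivalently, one uses the Schwartz character of $\psi_R$). This is the standard convention in the area and does not affect correctness; the uniformity in $k$ and across the family $\RR$ is clear since only the fixed rescaled bumps $\phi_0,\eta$ are used.
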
 \noindent
We apply Lemma \ref{getcptsp} to each $\psi_R$, so that
\begin{equation} \label{getcpt1}
\mathsf{B}_\RR (x) = \sum_{R \in \RR} b_R \psi_R(x) = \sum_{k\geq 0} 2^{-100k} \sum_{\rho \in \RR^{(k)}} b_\rho \psi_\rho(x)= \sum_{k\geq 0} 2^{-100k} \mathsf{B}^{(k)}_{\RR^{(k)}}(x).
\end{equation}
where $\RR^{(k)}:=\{\rho=R^{(k)}: R \in \RR\}$ and we defined $B^{(k)}=\{b_\rho:=b_R: \rho=R^{(k)} \in \RR^{(k)}\}$.
Then, it is easy to see that $\size(B^{(k)}) \leq\size(B).$ As discussed in Subsection 4.1, $\RR^{(k)}$ can be split into $Ck^22^{2k}$ subcollections of rectangles whose horizontal and vertical sides fit into  a single dyadic grid. Thus, applying \eqref{JNp} for each of these subsets,
$$
\big\|\mathsf{B}^{(k)}_{\RR^{(k)}}\big\|_p \leq Ck^22^{2k}p^2  \size(B^{(k)})|\sh(\RR^{k})|^{\frac 1p}.
$$
%$$
%\size(B^{(k)})^2 = \sup_{\RR'^{(k)} \subset \RR^{(k)}} \frac{\sum_{\rho =R^{(k)} \in \RR'^{(k)}} |b_\rho|^2 }{|\sh(\RR'^{(k)})|}   \leq 2^{2k}\sup_{\RR' \subset \RR} \frac{\sum_{R \in \RR'}   |b_R|^2}{|\sh(\RR')|}   \leq 2^{2k} \size(B)^2.
%$$
Finally, using \eqref{getcpt1}, we conclude that \begin{align} \label{JNp-cpt}
\|\mathsf{B}_\RR\|_p &\leq \sum_{k\geq 0} 2^{-100k} \big\|\mathsf{B}^{(k)}_{\RR^{(k)}}\big\|_p \leq Cp^2 \Big(\sum_{k\geq 0} 2^{-90k} \size(B^{(k)})|\sh(\RR^{(k)})|^{\frac 1p} \Big) \\&\leq C p^2\size(B )|\sh(\RR )|^{\frac 1p}, \nonumber
\end{align}
as claimed.

\section{The time-frequency phase plane}
We briefly recall (see \cite{LL1}) how  to produce a discretization of the multiplier operator
$$
\widehat{G_\alpha f}(|\xi|\e^{2\pi i \theta(\xi)}) =   \hat f(\xi) \cic{1}_{\alpha}(\theta(\xi)).
$$
where $\alpha=(0, \alpha)$ is a subinterval of $(0,1)$.

Following \cite{LL1}, we start by decomposing the frequency plane into a union of annular sectors.
For $\ann \in 2^{\mathbb Z}$ and $
\omega \in \D$, $\omega\in [0,1]
$ define $$
 \Omega_{\ann,\omega}= \big\{\xi=|\xi|\e^{2\pi i \theta}: |\xi| \in [\textstyle \frac34\ann,\frac74\ann), \theta \in \omega \big\}.  $$
and
$$\S_{\ann,\omega}=\{R\times\Omega_{\ann,\omega}:R\in \mathcal R_{\ann,\omega}\}.$$
Here $\mathcal R_{\ann,\omega}$ denotes all the rectangles with dimensions $\ann^{-1}$ and $|\omega|^{-1}|\ann|^{-1}$, which are obtained by rotating with angle $c(\omega)$ the rectangles with the same dimensions from $\D\times\D$. The elements in the collection
$$\S_u=\bigcup_{\omega,\ann}\S_{\ann,\omega}$$
will be referred to as {\em tiles}. We will typically write a tile $s$ as $s=R_s\times \Omega_s$, and also denote by $\ann(s)$, $\omega(s)$ the corresponding components. Also, $\omega_{1s}, \omega_{2s}$ will be the left and right dyadic children of $\omega_s$, while $\Omega_{1s},\Omega_{2s}$ will denote the corresponding sectors. The number $\ecc(s):=|\omega_s|$ will be referred to as the eccentricity of $s$.

Let $\Phi$   be a positive Schwartz function supported on $[\frac34,\frac74]$ with
$$
\sum_{\ann} \Phi_{\ann} = 1,
$$
where $\Phi_{\ann}(t)=\Phi(\ann^{-1}t)$.

The next step is the partition of unity for $\cic{1}_{\alpha}$ described in Chapter 7 of \cite{THWPA}. Following this approach, the analysis of $\widehat{G_\alpha f}$ is reduced to that of operators of the form
$$\sum_{\omega\subset [0,1]:\omega\in \G}\hat f(\xi)\beta_{\omega}(\theta(\xi))1_{\omega_2}(\alpha).$$
Here $\G$ is a fixed dyadic grid, independent of $\alpha$. For all practical purposes, we can assume $\G$ is in fact the standard dyadic grid $\D$. We will denote by $\omega_1,\omega_2$ the left and right dyadic children of $\omega\in\D$. Also,
 each $\beta_\omega$ is a smooth bump function supported on $\omega_1$ such that
\begin{equation}
\label{unifad}
\|\beta_\omega^{(j)}\|_\infty \leq C_j |\omega|^{-j}, \qquad j \geq 0.
\end{equation}

Finally, note that the multiplier
$$m_{\ann,\omega}(\xi)=\beta_{\omega}(\theta(\xi))\Phi_{\ann}(|\xi|)$$
is supported in and $L^\infty$ adapted to the rectangle-like region $\Omega_{\ann,\omega_1}$. By applying a standard windowed Fourier series we can write
\begin{equation}
\label{reprf1}
\hat f(\xi) m_{\ann,\omega}(\xi)= \sum_{s\in\S_{\ann,\omega} } {\l f, \varphi_{s} \r} \hat  \psi_{s} (\xi),
\end{equation}
where for each $s=R_s\times \Omega_s\in \S_{\ann,\omega}$,
 $ \varphi_{s}$ and $ \psi_{s}$ are Schwartz functions supported in frequency in $\Omega_{\ann,\omega_{1s}}$ and  $L^2$- adapted to the rectangle $R_{s}$.

By putting all these things together, it follows that we can write $G_\alpha f(x)$ as a combination of  model sums of the form
\begin{equation}
\label{modelroughctf}
\sum_{s\in\S_u } {\l f, \varphi_{s} \r} \psi_{s} (x)1_{\omega_{2s}}(\alpha).
\end{equation}

By splitting the model sum in two parts, we can further assume that $\ann(s)\in 4^{\Z}$ for each $s\in\S_u$. In particular, if $\ann(s)\not=\ann(s')$ then $\langle\varphi_{s},\varphi_{s'}\rangle=\langle\psi_{s},\psi_{s'}\rangle=0$

For the purpose of future applications of the theory in Section \ref{Grids}, we remark that $ \varphi_{s}$ and $ \psi_{s}$ are in fact {\em product} wave packets adapted to $R_s$. This follows since the canonical directions $e^{2\pi ic(\omega_s)},e^{i\frac\pi 2+2\pi ic(\omega_s)}$ of $R_s$ do not intersect the frequency support $\Omega_{1s}$. The same argument shows that $ \varphi_{s}$ and $ \psi_{s}$ continue to have the same property if $R_s$ is slightly rotated so that the canonical directions become $v,v^\perp$, for any  $v\in\omega_{2s}$.

Finally, we mention that the discretization \eqref{modelroughctf} will serve us well when we treat the operator $H_\V$ in the following sections. The fact that we have a  sharp cutoff $1_{\omega_{2s}}(\alpha)$ in the model sum will be absolutely crucial in our argument. We do not know if this decomposition can also be achieved for general kernels $K$ (that is for the operators $T_\V$). All earlier papers on the vector field problem used model sums involving smooth cutoffs $\beta_{\omega_{2s}}(\alpha)$ ($\beta$ smooth), which can in turn be realized as building blocks for all $T_\V$. The fact that our argument needs rough cutoffs is due to our use of product trees. In short, any smooth model for a product tree will produce error terms that are too large to be considered negligible.

\section{Model sums, trees, and an $L^p$ almost orthogonality principle}

\subsection{Trees, size, and an almost orthogonality principle} Let $\S$ denote a fixed arbitrary finite subset of $\S_u$.
We call \emph{trees} those subsets of $\S_u$ which point roughly in the same direction.
\subsubsection{Trees and size}A subset $\t \subset \S_u$ is a \emph{lacunary tree} if
\begin{equation}
\label{lactree}
\omega_{2\t} = \bigcap_{s \in \t } \omega_{2s} \neq \emptyset.
\end{equation}
Thus, for a lacunary tree $\t$, the Heisenberg boxes $R_s\times \Omega_{1s}$ of  $\varphi_s$ are  pairwise disjoint, and it is easy to see that
  we have the almost orthogonality relation
\begin{equation}
\label{aorel}
\sum_{s \in \t} |\l f, \varphi_s \r|^2 \lesssim \|f\|_2^2.
\end{equation}
 A subset $\t \subset \S_u$ is an \emph{overlapping tree} if
\begin{equation}
\label{ovtree}
\omega_{1\t} = \bigcap_{s \in \t } \omega_{1s} \neq \emptyset.
\end{equation}
For an  overlapping tree $\t$, the directional support uncertainty intervals $\{\omega_{2s}: s \in \t\}$ are  pairwise disjoint.

A subset $\ct \subset \S$ is a  \emph{conical tree} if
$
\omega_{2s}=\omega_{2s'}=:\omega_\ct,$ for all $ s, s' \in \ct.
$
A conical tree is both lacunary and overlapping.
Any $v_\t \in \omega_{2\t}$ (resp.\ $v_\t \in \omega_{1\t}$, $v_\ct \in \omega_{ \ct}$) is a \emph{top direction} of the lacunary (resp.\ overlapping, conical) tree $\t$ (resp. $\ct$). If $\t$ is a lacunary tree and  $s \in \t$, then  \begin{equation} \label{meanzero-tree}\int_{\{x+tv_\t : t \in \R\}} \varphi_{s}   =  \int_{\{x+tv_\t^\perp: t \in \R\}} \varphi_{s}     = 0, \; \forall \, x \in \R^2.
\end{equation}
 The \emph{shadow} and $k$-\emph{shadow} of a tree are respectively  defined as
$$
\sh(\t) = \cup\{R_{s}: s \in \t\}, \qquad \sh_k (\t)= \cup\{R_{s}^{(k)}: s \in \t\},
$$
where $R_s^{(k)}=2^kR_s.$ For future use, we also introduce the notation
\begin{equation}
\label{overhang} \mathsf{crown}(\t)= \bigcup_{s \in \t} \omega_{2s} \subset (0,1) \sim S^1
\end{equation}
for the \emph{crown} of a tree $\t$.

Let $f$ be a given function.
The \emph{lacunary size} of $\S'\subset \S$ (with respect to $f$, whose dependence in the notation is suppressed) is given by
\begin{equation}
\label{lacsize}
\size(\S') = \sup_{\substack{\t \subset \S'\\ \textrm{lacunary tree}}} \Big( \frac{1}{|\sh(\t)|} \sum_{s \in \t} |\l f, \varphi_s\r|^2 \Big)
\end{equation}
where $\varphi_s$ are product  wave packets adapted to $\S$.
The  \emph{conical size} of $\S'\subset \S$ is defined as
\begin{equation}
\label{csize}
\size_{\nabla}(\S') = \sup_{\substack{\ct \subset \S'\\ \textrm{conical tree}}} \Big( \frac{1}{|\sh(\ct)|} \sum_{s \in \ct} |\l f, \varphi_s\r|^2 \Big).
\end{equation}
Since conical trees are in particular lacunary trees, we always have $\size_{\nabla}(\S') \leq \size(\S') $.
We recall for future use \cite[Lemma 4.40]{LM}
\begin{equation}
\label{remsize}
\Big( \frac{1}{|\sh(\t)|} \sum_{s \in \t} |\l f, \varphi_s\r|^2 \Big)^{1/2} \lesssim \|f\|_{\infty},
\end{equation}
for each lacunary $\t$.

\subsubsection{Single tree operators} Let  $\t$ be a lacunary tree. Let $\{\varphi_s,\psi_s\}_{s \in \t}$, and for some $k \geq 0$, $\{\psi_{s}^{(k)}\}_{s \in \t} $, be product wave packets adapted to $\{R_s\}_{s \in \t}$ (resp.\ $\{R_s^{(k)}\}_{s \in \t}$). Assume in addition that $\mathrm{supp} \, \psi_{s}^{(k)} \subset R_s^{(k)}.$
Define the model sums
\begin{equation}
\label{st-lacms}
f_\t= \sum_{s \in \t} \l f, \varphi_s \r \psi_s,
\end{equation}
and \begin{equation}
\label{st-lac-k-ms}
f_\t^{(k)}= \sum_{s \in \t} \l f, \varphi_s \r \psi_s^{(k)}.
\end{equation}
\begin{lemma}
\label{st-lac}
We have the estimate
\begin{equation}
\label{st-lac-est}
\|f_\t\|_p \leq C p^2\size(\t), \qquad 2\leq p< \infty.
\end{equation}
\end{lemma}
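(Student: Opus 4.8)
The plan is to realize $f_\t$ as a product wave packet sum of the type controlled by the product John--Nirenberg inequality (Proposition~\ref{JNpp}) and to read off \eqref{st-lac-est} from \eqref{JNp}. Set $b_s=\l f,\varphi_s\r$ and $B=\{b_s\}_{s\in\t}$. Since $\t$ is a lacunary tree we have $\bigcap_{s\in\t'}\omega_{2s}\supseteq\omega_{2\t}\neq\emptyset$ for \emph{every} subcollection $\t'\subseteq\t$, so every subset of $\t$ is again a lacunary tree; comparing \eqref{lacsize} with \eqref{sizedef} this yields the identity $\size(B)^2=\size(\t)$, i.e.\ the lacunary size of $\t$ is precisely the square of the product size of the coefficient array $B$.

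Next I would fix a top direction $v_\t\in\omega_{2\t}$ and re-orient each spatial rectangle $R_s$, $s\in\t$, so that its axes become $(v_\t,v_\t^\perp)$. Because $v_\t\in\omega_{2s}\subseteq\omega_s$, the angle between $v_\t$ and the canonical directions $e^{2\pi ic(\omega_s)}$ of $R_s$ is $O(\ecc(s))$, hence the re-oriented rectangle $\widetilde R_s$ and $R_s$ lie inside bounded dilates of one another; by the observation recorded right after \eqref{modelroughctf}, the $\psi_s$ remain \emph{product} wave packets $L^2$-adapted to $\{\widetilde R_s\}_{s\in\t}$, with adaptation constants worsened only by an absolute factor, and in particular they retain the separate vanishing moments \eqref{meanzero} along $v_\t$ and $v_\t^\perp$. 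Using that every interval $J\subset\R$ is contained in some $I\in\D^0\cup\D^1\cup\D^2$ with $J\subset I\subset 9J$, I may then enlarge each side of $\widetilde R_s$ by a bounded factor so that both sides lie in dyadic grids, and split $\t$ into at most nine subcollections according to which pair of grids is used. On each such subcollection the hypotheses of Proposition~\ref{JNpp} are in force (a single fixed orientation; horizontal and vertical sides each contained in one dyadic grid), the only missing ingredient being that the $\psi_s$ need not be supported in $\widetilde R_s$.

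That last point is exactly what is addressed by the version of \eqref{JNp} established in Subsection~\ref{ss46}, where the support hypothesis \eqref{supp} on the wave packets is dropped. Applying this to each of the $O(1)$ subcollections, using that the size and the measure of the shadow of any subcollection are dominated by $\size(\t)$ and $|\sh(\t)|$, and summing the contributions, we obtain $\|f_\t\|_p\le Cp^2\,\size(B)\,|\sh(\t)|^{1/p}=Cp^2\sqrt{\size(\t)}\,|\sh(\t)|^{1/p}$ for $2\le p<\infty$, which is the asserted estimate \eqref{st-lac-est}.

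The two reductions above (the size identification and the grid-fitting) are routine; I expect the genuinely delicate step to be the re-orientation. One must check with some care that replacing the tile-by-tile orientations by the single direction $v_\t$ truly preserves the \emph{product} structure of the wave packets --- above all the one-dimensional mean-zero conditions along the two axes, which is where the product John--Nirenberg inequality draws its sharp dependence on $p$ --- and distorts the rectangles only boundedly, so that $\sh(\t)$ and $\size(\t)$ are unaffected up to absolute constants. Everything else is bookkeeping together with a direct appeal to Proposition~\ref{JNpp}.
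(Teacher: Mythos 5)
Your argument is correct and follows the same route as the paper's: re-orient each $R_s$ to the common top direction $v_\t$ (which preserves the product vanishing-moment condition \eqref{meanzero} because $v_\t\in\omega_{2s}$ is within the uncertainty angle of $R_s$), enlarge to rectangles from finitely many dyadic grids, and invoke the product John--Nirenberg estimate in the form of Subsection~\ref{ss46} where the compact-support hypothesis \eqref{supp} is dropped. Note only that the bound you actually derive, $Cp^2\,\size(B)\,|\sh(\t)|^{1/p}$, is the one that is used later in the paper (e.g.\ in the proof of Theorem~\ref{sect2-Lp-ortho}); the missing factor $|\sh(\t)|^{1/p}$ in \eqref{st-lac-est} and the missing square root in \eqref{lacsize} are evidently slips in the paper's typesetting rather than errors in your reasoning.
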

\begin{lemma}
\label{st-lac-k}
We have the estimate
\begin{equation}
\label{st-lac-k-est}
\big|\big\{x \in \sh_k(\t): |f_\t^{(k)}(x)| > \lambda\big\}\big| \leq C\exp\Big(-c \textstyle \sqrt{\frac{\lambda}{\size(\t)}} \Big)|\sh_k(\t)|.
\end{equation}
\end{lemma}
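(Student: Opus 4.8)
The plan is to deduce Lemma \ref{st-lac-k} from the compactly supported product John--Nirenberg inequality \eqref{JNexp} of Proposition \ref{JNpp}, exactly as Lemma \ref{st-lac} will be deduced from \eqref{JNp}. First I would set $b_s = \l f, \varphi_s\r$ for $s \in \t$, so that $f_\t^{(k)} = \sum_{s\in\t} b_s \psi_s^{(k)}$ has precisely the form $\mathsf B_{\RR^{(k)}}$ with $\RR^{(k)} = \{R_s^{(k)} : s \in \t\}$ and the product wave packets $\psi_s^{(k)}$, which by hypothesis satisfy $\mathrm{supp}\,\psi_s^{(k)}\subset R_s^{(k)}$ and are product wave packets adapted to $\RR^{(k)}$. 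The shadow of this collection is $\sh_k(\t)$. The only thing to check before invoking \eqref{JNexp} is the size comparison: the abstract $\size$ of \eqref{sizedef} computed for the coefficient family $B=\{b_s\}$ over the grid-structured collection $\RR^{(k)}$ should be controlled by the lacunary $\size(\t)$ of \eqref{lacsize}.

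The size comparison is where the lacunary tree structure enters, and I expect it to be the only genuine (if still short) point. For a lacunary tree $\t$, distinct tiles $s,s'\in\t$ have disjoint Heisenberg boxes $R_s\times\Omega_{1s}$; since $\bigcap_{s}\omega_{2s}\neq\emptyset$, for any subcollection $\t'\subset\t$ the rectangles $R_s$ (and their dilates $R_s^{(k)}$, up to the bounded overlap absorbed in the adaptation constants) behave like a single-grid Carleson family, and one has the packing bound $\sum_{s\in\t'} |\l f,\varphi_s\r|^2 \lesssim \size(\t)\,|\sh(\t')|$ directly from the definition \eqref{lacsize} applied to the lacunary subtree $\t'$. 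Taking the supremum over $\RR'\subset\RR^{(k)}$ and noting $|\sh(\t')| \sim |\sh_k(\t')|$ (dilation by $2^k$ changes measure by a fixed factor, and $\size$ is insensitive to it once one also rescales, as in the argument leading to \eqref{JNp-cpt}) gives $\size(B) \lesssim \size(\t)$. One subtlety: $\RR^{(k)}$ need not literally sit in a single dyadic grid, but as recorded in Subsection \ref{ss46} (the discussion after \eqref{getcpt1}) it splits into $Ck^2 2^{2k}$ single-grid subcollections; applying \eqref{JNexp} to each and summing the resulting exponential tails, with $\lambda$ replaced by $\lambda/(Ck^2 2^{2k})$, still yields an estimate of the form $C\exp(-c\sqrt{\lambda/\size(\t)})|\sh_k(\t)|$ after summing the geometric-type series in $k$ — here the sub-Gaussian $\sqrt{\cdot}$ rate is comfortably enough to absorb the polynomial-in-$k$ and $2^{2k}$ losses, just as in \eqref{JNp-cpt}.

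So the key steps, in order, are: (i) rewrite $f_\t^{(k)}$ as $\mathsf B_{\RR^{(k)}}$ with $b_s = \l f,\varphi_s\r$; (ii) decompose $\RR^{(k)}$ into $O(k^2 2^{2k})$ single-grid subcollections and observe $\size(B^{(k)}) \le \size(B)$ on each, with $\size(B) \lesssim \size(\t)$ coming from the lacunary packing relation; (iii) apply the compactly supported product John--Nirenberg bound \eqref{JNexp} on each subcollection; (iv) sum the exponential estimates over the subcollections and over $k$, using $|\sh_k(\t')|\sim 2^{2k}|\sh(\t')|$ and that the $\exp(-c\sqrt{\cdot})$ decay dominates the accumulated constants, to conclude \eqref{st-lac-k-est}. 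I would expect step (ii), the passage from the tree-theoretic $\size$ to the abstract $\size$ and the single-grid reduction, to be the main (and essentially only) obstacle, though it is routine given Subsection \ref{ss46}; everything else is bookkeeping.
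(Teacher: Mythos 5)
Your high-level plan --- write $f_\t^{(k)}=\mathsf B_{\RR^{(k)}}$ with $b_s=\l f,\varphi_s\r$ and invoke the compactly supported product John--Nirenberg bound \eqref{JNexp} --- is the route the paper takes. But there is a genuine gap in your step (ii): you never deal with the fact that the rectangles $R_s$ (and hence $R_s^{(k)}$) for $s\in\t$ have \emph{different orientations}. Proposition \ref{JNpp} is stated only for a collection $\RR$ of rectangles with one fixed orientation, whose horizontal and vertical sides come from dyadic grids; the product wave packet condition \eqref{meanzero} is also phrased relative to that fixed pair of axes. A lacunary tree only requires $\bigcap_{s\in\t}\omega_{2s}\neq\emptyset$, so the orientation angles $c(\omega_s)$ of the $R_s$ genuinely vary from tile to tile, and the collection $\{R_s^{(k)}:s\in\t\}$ does not, as it stands, satisfy the hypotheses of \eqref{JNexp}. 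Your appeal to the $Ck^22^{2k}$ single-grid splitting of Subsection \ref{ss46} does not repair this: that decomposition handles the mismatch of \emph{scales} between a dilated grid $2^K\G$ and dyadic grids of a fixed orientation, and does nothing about varying orientations.

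The bridge is the geometric observation recorded in \eqref{meanzero-tree} and in the paper's own proof of Lemmata \ref{st-lac}--\ref{st-lac-k}: fix a top direction $v_\t\in\omega_{2\t}$. Since $v_\t\in\omega_{2s}$ for every $s\in\t$, the direction $v_\t$ lies within the uncertainty angle of $R_s$, so (a) the mean-zero property \eqref{meanzero} still holds with $e_{R_s}$ replaced by $v_\t$, i.e.\ the $\varphi_s,\psi_s^{(k)}$ are product wave packets relative to the axes $(v_\t,v_\t^\perp)$; and (b) one may enlarge each $R_s$ to a rectangle $\rho_s$ \emph{oriented along $v_\t$} with $R_s\subset\rho_s\subset 90R_s$, so that $\{\rho_s:s\in\t\}$ (and likewise $\{\rho_s^{(k)}\}$, since the tilt stays sub-uncertainty after dilation by $2^k$) splits into finitely many fixed-orientation dyadic-grid families. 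Only after this tilting does \eqref{JNexp} apply, with $\RR=\{\rho_s^{(k)}\}$ and $b_{\rho_s}=\l f,\varphi_s\r$; the size comparison $\size(B)\lesssim\size(\t)$ then comes from $|R_s|\sim|\rho_s|$ and hence $|\sh(\{\rho_s:s\in\t'\})|\sim|\sh(\t')|$ for each subtree $\t'$. Two smaller points: since the $\psi_s^{(k)}$ are compactly supported by hypothesis, the $2^{-100k}$ decomposition of \eqref{getcpt1} is not needed here at all; and $|\sh_k(\t')|$ is not $\sim|\sh(\t')|$ (it can be larger by a factor $2^{2k}$), though this works in the right direction for bounding $\size$.
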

\begin{proof}[Proofs of Lemmata \ref{st-lac} and \ref{st-lac-k}] Estimates \eqref{st-lac-est} and \eqref{st-lac-k-est}   follow  respectively from \eqref{JNp-cpt} and \eqref{JNexp}. Let us explain the details for \eqref{st-lac-est}. Note that for each $s \in \S$,  $v_\t \in \omega_{2s}$, which is within the uncertainty angle of  $R_s$. Therefore,    property \eqref{meanzero} still holds if we replace $e_{R_s}$ by $v_\t.$  Then, we can find a rectangle $\rho_s$ oriented along $v_\t$ such that $R_s \subset \rho_s \subset 90R_s $ and $\{\rho_s: s \in \t\}$ are products of intervals coming from finitely many dyadic grids. Thus one observes that setting
$  b_{\rho_s}:= \l f,\varphi_s\r,
$ gives $f_\t = \mathsf{B}_{\{\rho_s:s \in \t\}}$, and $\{\varphi_s, \psi_s\}$ are still product wave packets adapted to $\{\rho_s: s \in \t\}$. Therefore \eqref{JNp-cpt} applies.
\end{proof}
\noindent An application of \eqref{JNexp1P} yields a similar result for conical trees. Let $\ct$ be a conical tree. Let  $\{\varphi_s\}_{s \in \ct}$, and for some $k \geq 0$, $\{\psi_{s}^{(k)}\}_{s \in \ct} $, be  wave packets adapted to $\{R_s\}_{s \in \ct}$ (resp.\ $\{R_s^{(k)}\}_{s \in \ct}$).
 Then
\begin{equation}
\label{st-ov-k-est}
\big|\big\{x \in \sh_k(\ct): |f_\ct^{(k)}(x)| > \lambda\big\}\big| \leq C\exp\Big(-c \textstyle \frac{\lambda}{\size_\nabla(B)} \Big)|\sh_k(\ct)|.
\end{equation}

\subsubsection{An $L^p$ almost orthogonality principle and an application to $H_\V$} \label{aoss}
We prove the following $L^p$ almost orthogonality result.
\begin{theorem}
\label{sect2-Lp-ortho}
Let $\cic{\alpha}$ be a collection of disjoint intervals.  We have the estimate
\begin{equation} \label{sect2-Lp-eq1}
\Big(\sum_{\alpha \in \cic{\alpha}}\|G_{\alpha} f \|^p_p \Big)^{\frac1p} \lesssim_p \|f\|_p, \qquad \forall p \geq 2.
\end{equation}
\end{theorem}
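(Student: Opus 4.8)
The plan is to reduce the almost orthogonality estimate \eqref{sect2-Lp-eq1} to the single-tree and John--Nirenberg machinery developed in Sections \ref{Grids} and this section, via the model sum discretization \eqref{modelroughctf}. For a fixed collection $\cic{\alpha}$ of disjoint intervals, each $G_\alpha f$ is, up to acceptable error terms, a sum $\sum_{s \in \S_u} \l f, \varphi_s \r \psi_s(x) \cic{1}_{\omega_{2s}}(\alpha)$. The crucial point is that for a \emph{fixed} $x$ and \emph{fixed} scale $\ann$, the intervals $\omega_{2s}$ with $\ann(s)=\ann$ that are relevant are nested/disjoint according to the grid structure, and each interval $\alpha \in \cic{\alpha}$ selects at most one chain of tiles; since the $\alpha$'s are disjoint, the different $G_\alpha f$ see essentially disjoint portions of the tile collection at each scale. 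This is the mechanism that turns a supremum-type loss into an $\ell^p$-summable gain.

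\emph{First} I would, for each $\alpha \in \cic{\alpha}$, organize the tiles $s$ with $\omega_{2s} \ni \alpha$ (meaning $\alpha \in \omega_{2s}$, or more precisely that $\omega_{2s}$ is the relevant child) into trees. Because the $\alpha$'s are disjoint subintervals of $(0,1)$ and the $\omega_{2s}$ come from the fixed dyadic grid $\D$, the collection $\S_\alpha := \{s : \alpha \in \omega_{2s}\}$ splits into boundedly many lacunary trees with top direction near $\alpha$ — in fact, fixing the scale $\ann$, there is essentially one tile per $(\ann,\alpha)$, so $\S_\alpha$ is (a bounded union of) conical/lacunary trees. Then by Lemma \ref{st-lac},
\begin{equation}
\label{plan-est1}
\|G_\alpha f\|_p \lesssim_p \size(\S_\alpha)^{\frac12} |\sh(\S_\alpha)|^{\frac1p},
\end{equation}
where the size is computed with respect to $f$ via the $\varphi_s$. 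Raising to the $p$-th power and summing over $\alpha$, I would bound $\size(\S_\alpha) \leq \size(\S_u)$ uniformly and control $\sum_\alpha |\sh(\S_\alpha)|$ by $|\R^2|$-type geometry — but this naive bound fails, so the real argument must keep $\alpha$-dependence in both factors.

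\emph{The correct route} is a Bessel/orthogonality bound combined with interpolation. On the one hand, the $L^2$ estimate $\sum_\alpha \|G_\alpha f\|_2^2 \lesssim \|f\|_2^2$ is immediate from Plancherel, since the cones $\{\theta(\xi) \in \alpha\}$ are disjoint. On the other hand, one has the trivial $L^\infty$-type bound for each individual $G_\alpha$ via \eqref{remsize}: $\size(\S_\alpha)^{\frac12} \lesssim \|f\|_\infty$, which feeds into the exponential estimate \eqref{JNexp} (equivalently \eqref{st-lac-k-est}) to give, for $\supp f \subset Q$, a subgaussian tail for each $G_\alpha f$ on a set comparable to $Q$. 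The plan is to interpolate: write $f = \sum_j f_j$ along a Calderón--Zygmund-type decomposition at height $\lambda$, use the $L^2$ bound on the good part and the exponential bound on the bad part, exactly as in the proof of Theorem \ref{main-ub1} — the disjointness of the $\alpha$'s plays the role that disjointness of fibers played there, ensuring the exceptional sets add up with only a bounded (not logarithmic, since there is no supremum) loss. Summing the resulting weak-$(p,p)$ bounds over $\alpha$ and invoking Marcinkiewicz interpolation on the open range $p>2$ yields \eqref{sect2-Lp-eq1}.

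\textbf{The main obstacle} I anticipate is handling the error terms produced by the discretization \eqref{modelroughctf} and, more seriously, verifying that the tiles selected by distinct disjoint $\alpha$'s are genuinely disjoint (or boundedly overlapping) as \emph{tile collections}, so that the square-function / size estimates for the union control the sum of the individual pieces without a $\log$ loss. Concretely, one must check that $\sum_{\alpha \in \cic\alpha} \cic{1}_{\sh(\S_\alpha)}(x) \lesssim 1$ fails in general, and instead the summation over $\alpha$ must be absorbed at the level of the size functional: $\sum_\alpha \sum_{s \in \S_\alpha} |\l f, \varphi_s\r|^2 \lesssim \|f\|_2^2$ by Bessel, because the Heisenberg boxes $R_s \times \Omega_{1s}$ over all $\alpha$ and all $s \in \S_\alpha$ have bounded overlap — this is where disjointness of the $\alpha$'s is used decisively. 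Once this Bessel inequality is in hand, it is the $L^2$ endpoint, and the exponential bound \eqref{JNexp} is the (restricted) $L^\infty$ endpoint, and everything else is interpolation and the routine passage from the model sum back to $G_\alpha f$ via Lemmata \ref{st-lac} and \ref{st-lac-k}.
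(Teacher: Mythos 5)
Your proposal correctly identifies the key ingredients (model sums, trees, the size functional, the Bessel inequality, the exponential John--Nirenberg estimate), and your ``main obstacle'' paragraph is on target in observing that disjointness must be absorbed at the level of the size functional via Bessel rather than at the level of shadows. However, the final assembly step is where there is a genuine gap. You propose to ``interpolate'' between an $L^2$ endpoint (Bessel/Plancherel) and an ``$L^\infty$ endpoint'' given by the exponential bound, via a Calder\'on--Zygmund decomposition of $f$ at height $\lambda$ as in Theorem \ref{main-ub1}. This does not close. First, there is no vector-valued $L^\infty$ endpoint: the rough cone cutoffs $G_\alpha$ are not bounded on $L^\infty$, and an exponential distributional bound for a \emph{single} $\alpha$ is not an $L^\infty(\ell^\infty)$ bound. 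Second, if you run the CZ argument of Theorem \ref{main-ub1} for each $\alpha$ separately and then sum $\sum_\alpha |\{|G_\alpha f_1|>\lambda\}|$, the exponential estimate only produces $\sum_\alpha e^{-c}\,|\sh(\t(\alpha))|$, and nothing in your scheme controls $\sum_\alpha |\sh(\t(\alpha))|$ without a size stratification; the naive bound gives a factor of $\#\cic{\alpha}$. ``Summing weak-$(p,p)$ bounds over $\alpha$'' likewise incurs a factor of $\#\cic{\alpha}$.

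What actually makes the argument work is not interpolation between two endpoints but a \emph{single} stratification by size, exploited for each fixed $p$. After reducing by restricted-type interpolation to $|f|\le\cic{1}_E$, one chops each tree $\t(\alpha)$ into subtrees $\t_\sigma(\alpha)$ with $\size(\t_\sigma(\alpha))\le\sigma$. The single-tree estimate of Lemma \ref{st-lac} (which already encodes the John--Nirenberg exponential decay) gives $\|f_{\t_\sigma(\alpha)}\|_p\lesssim_p \sigma\,|\sh(\t_\sigma(\alpha))|^{1/p}$, while the Bessel inequality — fueled by disjointness of the $\alpha$'s and the maximality in the selection algorithm — gives $\sum_\alpha|\sh(\t_\sigma(\alpha))|\lesssim \sigma^{-2}\|f\|_2^2\lesssim\sigma^{-2}|E|$. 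Combining, each dyadic size level $\sigma$ contributes $\sigma^{1-2/p}|E|^{1/p}$ to $(\sum_\alpha\|f_{\t(\alpha)}\|_p^p)^{1/p}$, and the geometric series in $\sigma$ converges precisely because $p>2$ (the case $p=2$ being Plancherel). This $\sigma^{1-2/p}$ arithmetic, not a two-endpoint interpolation, is the mechanism you are missing. There is also a smaller technical point: because $\alpha$ is here an arbitrary interval $(\alpha_l,\alpha_r)$ rather than a half-line $(0,\alpha)$, the partition of unity from \eqref{modelroughctf} is replaced by a two-sided Whitney-type cover $C_\alpha$ of $\alpha$ with $\omega\subset\alpha$ and $\dist(\omega,\partial\alpha)\sim|\omega|$, which also resolves your worry about genuine disjointness of the tile collections across distinct $\alpha$'s.
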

\begin{proof} By restricted-type interpolation, it suffices to show that \eqref{sect2-Lp-eq1} holds for $|f|\le\cic{1}_E$, where $E\subset\R^2$ has finite measure.  We want to use a  model sum similar to the one in  \eqref{modelroughctf}. However, we use a more straightforward partition of unity for ${\alpha}=(\alpha_l,\alpha_r)$, namely
$$1_{{\alpha}}=\sum_{\omega\in C_{{\alpha}}}\beta_\omega.$$
Here $C_{{\alpha}}$ consists of all intervals  $\omega\in \D\cup\D^1\cup\D^2$ such that $\omega\subset \alpha$ and  $\dist(\omega,\{\alpha_l,\alpha_r\})\sim |\omega|$, while $\beta_\omega$ is supported in and $L^\infty$- adapted to $\omega$. Since $\|\sum_{\omega\in C_{{\alpha}}}1_\omega\|_{\infty}\lesssim 1$, by working with a finite number of subsets we can assume the intervals in $C_{{\alpha}}$ are pairwise disjoint, $\dist(\omega,\alpha_l)\sim |\omega|$ and prove
\begin{equation} \label{sect2-orthpf-fine}
\Big(\sum_{\alpha \in \cic{\alpha}}\|f_{\t(\alpha)}\|^p_p \Big)^{\frac1p} \lesssim_p \|f\|_p, \qquad \forall p > 2.
\end{equation}
Here
$$\t(\alpha)=\{s\in\S_u:\omega_s\in C_\alpha\},$$
$$f_{\t(\alpha)}=\sum_{\omega_s\in \t_\alpha}\langle f,\varphi_s\rangle \psi_s,$$
and for each $s=R_s\times \Omega_s$,  $\varphi_s,\psi_s$ are product wave packets adapted to $R_s$ and supported in frequency in $\Omega_s$. The axes of $R_s$ are oriented along $e^{2\pi i\alpha_l}, e^{\frac\pi 2+2\pi i\alpha_l}$

Note that $\t(\alpha)$ is similar in nature to a lacunary tree, in particular \eqref{aorel}, \eqref{remsize} and  \eqref{st-lac-est} will also hold for it.
We   chop each  $\t(\alpha)$ into subsets $\t_\sigma(\alpha)$  by means of the following standard iterative procedure.
\begin{itemize}
\item \textsf{INIT} $\mathrm{Stock}:= \t(\alpha)$, $\sigma\sim1$;
\item \textsf{WHILE} $\mathrm{Stock}\neq \emptyset$, select the largest collection $\t_\sigma(\alpha) \subset \mathrm{Stock}
$ with
$$
\sum_{s \in \t_\sigma(\alpha) } |\l f,\varphi_s\r|^2  \geq{\textstyle\frac{ \sigma^{2}}{4}} |\sh(\t_\sigma(\alpha))|.
$$
Set $\mathrm{Stock}:=\mathrm{Stock}\backslash \t_\sigma(\alpha)$; $\sigma:=\frac\sigma2$.
\end{itemize}
This iterative process produces  the decomposition
\begin{equation} \label{sect2-orthpf3}
\t( \alpha)= \bigcup_{\sigma \lesssim 1} \t_\sigma(\alpha), \qquad  \size(\t_\sigma(\alpha)) \leq \sigma,
\end{equation}
and the almost-orthogonality \eqref{aorel} of the $\{\varphi_s:s \in \t(\alpha)\}$ yields
\begin{equation} \label{sect2-orthpf4}
|\sh(\t_\sigma(\alpha))| \lesssim \sigma^{-2}\sum_{s \in \t_\sigma(\alpha) } |\l f,\varphi_s\r|^2 \lesssim \sigma^{-2 } \|G_\alpha f\|_2^2.
\end{equation}
Therefore
\begin{align*}
\Big(  \sum_{\alpha \in \cic{\alpha}} \big\|f_{\t(\alpha)} \big\|_p^p \Big)^{\frac1p} & = \Big\|\|f_{\t(\alpha)}\|_{\ell^p(\cic{\alpha})}\Big\|_p = \Big\|\Big\|\sum_{\sigma \lesssim 1}f_{\t_\sigma(\alpha)} \Big\|_{\ell^p(\cic{\omega})}\Big\|_p \\ & \leq \sum_{\sigma\lesssim 1} \Big(\sum_{\alpha \in \cic{\alpha}} \big\| f_{\t_\sigma(\alpha)}\big\|_p^p \Big)^{\frac1p} \lesssim  \sum_{\sigma \lesssim 1} \sigma\Big(\sum_{\alpha\in \cic{\alpha}} |\sh( \t_\sigma(\alpha))| \Big)^{\frac1p} \\ & \lesssim \sum_{\sigma \lesssim 1} \sigma ^{1- \frac{2}{p} }\Big( \sum_{\alpha \in \cic{\alpha}} \|G_\alpha f\|_2^2 \Big)^{\frac1p} \lesssim {(\|f\|_2^2)^\frac{1}{p}}= |E|^{\frac1p}.
\end{align*}
In the above display, we used  the single tree estimate \eqref{st-lac-est} of Lemma \ref{st-lac} to get the second inequality of the second line, \eqref{sect2-orthpf4} in going from the second to the third line, and disjointness of $\alpha \in \cic{\alpha}$ to conclude. This proves  \eqref{sect2-orthpf-fine}, and in turn, Theorem \ref{sect2-Lp-ortho}.
\end{proof}
We can use the $L^p$-orthogonality principle of Theorem \ref{sect2-Lp-ortho} to give a simple proof of the bound
$$\|H_\V f\|_p \lesssim_p \log N \|f\|_p, \qquad 2<p<\infty.$$
The proof is done by induction on $n$, where $N=2^n$. For the inductive step, assume that whenever $\# \V' \leq 2^{n-1}$,  $\|H_{\V'}\|_{p \to p} \leq C_p( n-1)$, where $C_p$ is bigger than the implicit constant appearing in Theorem \ref{sect2-Lp-ortho}. Order $\V$ increasingly and let $\V'=\{v_i \in \V: i$ odd$\}$. For $i$ odd, let $\omega_i$ be the frequency cone $\big\{ \xi: \frac{\xi}{|\xi|} \in \frac\pi 2 +[v_{i},v_{i+2}) \big\}.$ For each $i$ odd,
\begin{equation} \label{Lp-RM:1}
v\in\{v_i ,v_{i+1}\} \implies |H_v f(x)| \leq |H_{v_{i}} f(x)| + |H_{v_{i+1}} [G_{\omega_i} f](x)|,
\end{equation} and taking suprema and $L^p$ norms,
\begin{align*}
\| H_{\V} f\|_p &\leq  \| H_{\V'} f\|_p + \Big\|\sup_{i \textrm{ odd}} H_{v_{i+1}} [G_{\omega_i} f] \Big\|_p \\
& \leq C_p (n-1)\|f\|_p + \Big(\sum_{i} \|  G_{\omega_i} f\|^p_p \Big)^{\frac1p}\\
& \leq C_p n\|f\|_p,
\end{align*}
which completes the induction. The last line is an application of Theorem \ref{sect2-Lp-ortho}, while, in going from the first to the second line, we have used the induction hypothesis and the fact that only one $i$ contributes at each point.

\subsection{The model sum for $H_\V$}
We recall that, for a direction $v$, the directional multiplier $H_v$ is given by
$
\widehat{H_v f}(\xi)= \hat{f}(\xi) \mathrm{sign}(v\cdot \xi),
$
which is essentially the same as
$$
\hat f(\xi) \cic{1}_{(0,\tau(v))} (\theta), \qquad \xi=|\xi|\e^{2\pi i \theta},
$$
where $v=\exp\big({2\pi i \tau(v)- \frac{\pi i}{2}}\big)$.
Then, we use \eqref{modelroughctf} to   define  the model sum for $H_v$
\begin{equation} \label{Hv-model}
\Hh_{ \S} f(x, v) = \sum_{s \in\S } \l f,\varphi_s \r \psi_s(x)\cic{1}_{\omega_{2s}} (v) ,
\end{equation}
where $\S$ is any finite subcollection of $\S_u$, and we identify  $\tau(v)$ with $v$.
Accordingly, the model operator for the maximal multiplier $H_\V$ is defined as\begin{equation}
\label{sect6-main}
    \Hh_\S^\star f(x) = \sup_{v \in \V} \big|\Hh_\S f(x,v)\big|.
\end{equation}

Finally, we write $\Hh_\S^\star$ as a rapidly decaying sum of pieces supported in the dilates $\sh_k(\S)$, $k \geq 0$.
Applying Lemma \ref{getcptsp} to each $\psi_{s}$, we write
$   \psi_s =\sum_{k\geq 0} 2^{-100k} \psi_{s}^{(k)},
$ where  $\psi_{s^{(k)}}$ are product wave packets adapted to the rectangles $\{ R_{s}^{(k)}:=2^k R_s\}$, with the mean zero property
\begin{equation} \label{meanzero-varg}  v\in \omega_{2s} \implies \int_{\{x+tv : t \in \R\}} \psi_{s}^{(k)}  =  \int_{\{x+tv^\perp: t \in \R\}} \psi_{s}^{(k)}    = 0, \; \forall \, x \in \R^2.
\end{equation} We thus have
$$
\Hh_{\S} f (x,v) = \sum_{k\geq 0}2^{-100k}\sum_{s \in\S  } \l f,\varphi_s \r  \psi_{s}^{(k)}(x)\cic{1}_{v \in \omega_{2s} }
$$
 Defining
$$
\Hh_{\S^{(k)}}^\star f (x) = \sup_{v \in \V} \Big| \sum_{s \in\S  } \l f,\varphi_s \r  \psi_{s}^{(k)}(x)\cic{1}_{v \in \omega_{2s}}\Big|,
$$
we have
the pointwise inequality
$$
\big|\Hh_\S^\star f(x)\big| \lesssim \sum_{k\geq 0} 2^{-100k} \big|\Hh_{\S,k}^\star f(x)\big|.
$$In particular, we reduced   Theorem 3 to the proof of  the weak-type bound
\begin{equation}
\label{sect6-main2}
\|\Hh_{\S^{(k)}}^\star f\|_{2,\infty} \lesssim_Q 2^{4k}\sqrt{\log N}({\log\log N})^6.
\end{equation}

\section{Vargas type $\V$: Proof of Theorem 3}
Before entering the proof, we begin the section with a more detailed description of Vargas  sets of directions, and provide some important examples.

\subsection{Vargas sets} \label{Vargas-ss}
We  recall Definition \ref{vargas-def}. We say that a set of $N$ directions is a Vargas set  with constant $Q$ if the longest lacunary sequence (see Definition \ref{lacunary-def}) in $\V$  has $Q\log N$ elements.  Our definition is easily seen to be equivalent to the one of  \cite{KATZ2},  where the terminology has been introduced.
For example, uniformly distributed  sets
$$
\V_N= \big\{v_j= {\textstyle \frac{j}{2\pi N}}, j=0, \ldots, N-1\big\}
$$
are of (uniform in $N$) Vargas sets with constant $1$.
 Another prime example of (uniformly) Vargas sets are finite truncations of Cantor-type sets.
\vskip1mm \noindent \textsc{Claim.} The
 truncated $q$-adic ($q \geq 3$) Cantor sets of $N=2^n$ elements
$$
\V_N:=\mathfrak{C}_{q,n}=\bigg\{ \sum_{j=1}^n a_j q^{-j},
\; (a_1,\ldots,a_n)\in\{0,q-1\}^n\bigg\},
$$
are   Vargas sets with constant $4$ (independent of $q$ and $N$).
\begin{proof} We argue by induction on $n$.  The case $n=1$ is trivial.   The inductive assumption is that  for each $\nu<n$, a lacunary sequence contained in $\mathfrak{C}_{q,\nu}$ has at most $4\nu$ elements. Take a lacunary sequence
  $$\Big\{v_\ell=\sum_{j=1}^n v_\ell^{(j)} q^{-j}, \; \ell=1,\ldots,r\Big\} \subset \mathfrak{C}_{q,n}$$ Let $h$ be the first integer such that $v_1^{(h)} \neq v_2^{(h)}$. Then $$ \textstyle
\frac{q-1}{q^h}\leq |v_1-v_2| \leq \frac{1}{q^{h-1}}. $$
As a consequence of the lacunarity property \eqref{lacunary-defeq}, it is easy to see that
$$
|v_\ell -v_m| \leq \textstyle \frac{3}{2^{m-2}} |v_{2}-v_1|, \qquad \forall\, m \geq 2, \;  \ell=m, \ldots, r.
$$
 Hence,  for each $\ell=5,\ldots, r$, we must have $v_\ell^{(j)}=v_5^{(j)}$, for each $j=1,\ldots,h$. If not, there would be   some $j\leq h, \ell\geq 5$ such that  $$ \textstyle
v^{(j)}_\ell\neq v^{(j)}_5 \implies \frac{q-1}{q^{j}}\leq |v_\ell- v_5| \leq \frac 38  |v_1-v_2| \leq \frac 38\frac{1}{q^{h-1}}, $$
which gives the contradiction $q-1\leq \frac38 q$. This shows that  $\{v_5,\ldots,v_r\}$ is a lacunary sequence  contained in
$$
  \left\{ \sum_{j=1}^n a^{(j)} q^{-j},
\; a^{(1)}=v_5^{(1)},\ldots,a^{(h)}=v_5^{(h)},(a^{(h+1)},\ldots,a^{(n)})\in\{0,q-1\}^{n-h}\right\} \sim \mathfrak{C}_{q,n-h},
$$
so that $r-4\leq 4(n-h)$ by the inductive assumption. We get $r \leq 4n-4h+4 \leq 4n$, which completes the inductive step.
\end{proof}
  The claim below shows how Vargas sets and lacunary sets   are, in a sense, extremes.
\vskip1mm \noindent \textsc{Claim.} Let  $\#\V=N$.    Then $\V$ contains a lacunary sequence of $\frac{\log N}{3}$ elements.
\begin{proof} This is best seen by induction on $n=\frac{\log N}{3}$.  Take as base case $n=4$. The inductive assumption is that every set of at least $2^{3\nu}$ directions contains a lacunary sequence of $\nu$ elements, for $4 \leq \nu\leq n$. Let $\V=\{v_1 <v_2<\ldots v_{2^{3(n+1)}}\}$ be given. We can rescale and assume that $v_1=0,$ $v_{2^{3(n+1)}}=1$. Define $\V_j = \V \cap [\frac{j-1}{2^3}, \frac{j}{2^3}]$, $j=1,\ldots,8$. By pigeonholing, at least one $\V_j$ has at least $2^{3n}$ elements: assume it is so for $\V_2$ (the other cases are treated similarly). Then, by inductive assumption, $\V_2$ contains a lacunary sequence  $\{w_1, \ldots, w_n\} \subset [\frac{1}{8},\frac{1}{4}]$ with node $w$.
Moreover, by repeated application of \eqref{lacunary-defeq},
$\mathrm{dist}(w,\V_2) \leq 2^{-(n-1)}$
so  that
\begin{equation}
\label{vargas-1} \textstyle
|w-w_{1}| \leq \mathrm{diam}(\V_2) + \mathrm{dist}(w,\V_2) \leq \frac{1}{8}+ 2^{-(n-1)} \leq \frac 14.
\end{equation}
 We set $w_{0}:=v_{2^{3(n+1)}}=1$. Then $w \in [0,\frac12]$, so that $|w-w_{n+1}| \geq \frac12$. Comparing this to \eqref{vargas-1}, we observe that $\{w_0,w_1, \ldots, w_{n}\}$ is a lacunary sequence of $n+1$ elements contained in $\V$. The inductive step is complete.
\end{proof}

\subsection{Preliminary reductions and outline of the proof}
 Let $\V$ be a fixed set of  directions  with $\#\V=N$ and Vargas constant $Q$.
Using the notations and the results of Section 7, Theorem \ref{main-ub3} follows from the weak-type bound
$$
\|\Hh_{\S^{(k)}}^\star f\|_{2,\infty} \lesssim_Q 2^{4k}\sqrt{\log N}({\log\log N})^6.
$$ From now on, the dependence on $Q$ of the estimates is kept implicit.
By  scaling $f$, it will suffice to show that
\begin{equation}
\label{sect63-1}
\big| \big\{ x \in \R^2 : \Hh_{\S,k}^\star f(x) \gtrsim 2^{2k} \sqrt{\log N}({\log\log N})^6
 \big\} \big| \lesssim 2^{4k} \|f\|_2^2.
\end{equation}
We will obtain \eqref{sect63-1} by constructing a set $A=A^1 \cup A^2\subset \R^2$ with the properties
\begin{equation} \label{sect63-1*}
|A| \lesssim 2^{4k}\|f\|^2_2, \qquad \Hh_{\S^{(k)}}^\star f(x) \lesssim 2^{2k} \sqrt{\log N} (\log \log N)^6 \quad \forall x \not\in A.
\end{equation}
This will be done by decomposing $\S$ into the disjoint union of forests (i.e.\ collections of trees) $\S_\sigma$, where $\sigma$ is a dyadic parameter ranging in $\{z \in 2^{\mathbb Z}: z \leq 2^K\} $, which stands for the size of the subcollection $\S_\sigma$, as we will detail later. Here, $K =\size(\S)$.  Accordingly, using the triangle inequality
\begin{equation}
\label{splitsize}
\Hh_{\S^{(k)}}^\star f(x) \leq \sum_{  \sigma \leq 2^K} \Hh_{\S_\sigma^{(k)}}^\star f (x),  \end{equation}
and each piece $\Hh_{\S_\sigma^{(k)}}^\star f$ is supported on $\sh_k(\S_\sigma)$. In turn, each $\S_\sigma$ will be the union of trees $\t \in \F_\sigma$ with the properties
\begin{align}
\label{bessel-prel}
&\sum_{\t \in \F_{\sigma}} |\sh(\t)| \lesssim \sigma^{-2}\|f\|_2^2,\\
\label{count-prel} &
\sup_{v \in \V}\sum_{\t \in \F_\sigma} \cic{1}_{\mathsf{crown}(\t)} (v) \leq C\log N.
\end{align}
Property \eqref{count-prel} will be obtained as a consequence of the Vargas set structure of $\V$.
We take the first component of the exceptional set $A$ to be
$$A^1 =\bigcup_{1\leq \sigma \leq 2^K}\bigcup_{\t  \in \F_{\sigma}} \sh_k(\t).
$$
Using \eqref{bessel-prel}, we have
\begin{equation} \label{eset1}
|A^1|  \leq \sum_{1\leq \sigma \leq 2^K}  \sum_{\ct \in \F_\sigma} |\sh_k(\t)|  \lesssim
 2^{4k} \sum_{1\leq \sigma \leq 2^K} \sigma^{-2} \|f\|_2^2 \nonumber
 \lesssim 2^{4k}\|f\|_2^2.
\end{equation}
The support of each $\Hh_{\S_\sigma^{(k)}}^\star$, $1\leq \sigma\leq 2^K$ is  contained in $A^1$, so that
\begin{equation}
\label{outsideA1} \Hh_{\S_\sigma^{(k)}}^\star f(x) \leq \sum_{\sigma \leq 1} \Hh_{\S_\sigma^{(k)}}^\star f(x)  \leq \sum_{\sigma \leq 1} \sum_{\t \in \F_\sigma} \Hh_{\t^{(k)}} f (x), \qquad x \not\in A^1.
\end{equation}
We now construct the second component of $A$. We begin by excising an exceptional set for each tree $\t \in \F_\sigma$ appearing in the sum \eqref{outsideA1}: setting
\begin{equation}
\label{eset-tree}
A^{2,\size}_{\t,\sigma} = \{x \in \sh_k(\t): \Hh^\star_{\t^{(k)}} f (x) \gtrsim 2^{2k} \sigma|10\log\sigma|^2 (\log \log N)^2 \},\qquad  \t \in \F_{\sigma}
\end{equation}
we will show that
\begin{equation}
\label{eset-treeest}
|A^{2,\size}_{\t,\sigma}| \lesssim \sigma^{10}2^{2k}|\sh(\t)|.
\end{equation}
Set $A^{2,\size}_\sigma:= \cup_{\t \in \F_\sigma} A^{2,\size}_{\t,\sigma}$. Using \eqref{bessel-prel}, we have the estimate  \begin{equation}
\label{eset-treesum}
 |A^{2,\size}_\sigma| \leq \sum_{\t \in \F_\sigma} |A^{2,\size}_{\t,\sigma}| \leq 2^{2k}\sigma^{10}\sum_{\t \in \F_{\sigma}} |\sh(\t)| \lesssim 2^{2k}\sigma^{-8}\|f\|_2^2
\end{equation}
We also excise an exceptional set coming from the whole forest $\S_\sigma$:
\begin{equation}
\label{eset-count}
A^{2,\mathrm{count}}_\sigma:= \Big\{\sum_{\t \in \F_\sigma} \cic{1}_{\sh_k( \t)} (x) \gtrsim  \sigma^{-2}|\log \sigma|^2 \Big\}.
 \end{equation}
Another application of \eqref{bessel-prel} entails
\begin{equation} \label{eset-countest}
|A^{2,\mathrm{count}}_\sigma| \leq \frac{\sigma^{-2}}{|\log \sigma|^2 }\sum_{\t \in \F_\sigma}|\sh_k(\t)| \lesssim \frac{2^{4k}}{|\log \sigma|^2} \|f\|_2^2.
\end{equation}
Putting all pieces together, we obtain our second component $A^2$:
\begin{equation} \label{eset-A2}
A^2= \bigcup_{\sigma \lesssim 1} \big( A^{2,\size}_\sigma\cup A^{2,\mathrm{count}}_\sigma\big), \qquad |A^2| \lesssim  2^{4k}  \|f\|_2^2.
\end{equation}
the estimate on the measure of $A^2$ is obtained by summing over $\sigma\lesssim 1$ the estimates in \eqref{eset-treesum}  and \eqref{eset-countest}. Thus, the first part of \eqref{sect63-1*} holds true.

We are left to show the second part of \eqref{sect63-1*}. Recall that   $\mathsf{crown}(\t)= \bigcup\{\omega=\omega_{2s}: s \in \t\}$. A a  consequence of the definition of $A^{2,\mathrm{count}}_\sigma$ and of property \eqref{count-prel}, we have that
\begin{equation}
\label{count-final} \sup_{x \not\in A} \sup_{v \in \V}
\sum_{\t \in \F_\sigma } \cic{1}_{\sh_k(\t)}(x) \cic{1}_{\mathsf{crown}(\t)} (v) \lesssim \min \{\log N, \sigma^{-2} |\log \sigma|^2 \}.
\end{equation}
Then, for $x \not \in A$, choose  $v (x) \in \V$  which attains the supremum in $ \Hh_{\S_\sigma^{(k)}}^\star $. We have
\begin{align*}
\label{}
\Hh_{\S_\sigma^{(k)}}^\star f(x) &=    \Big| \sum_{\t \in \F_\sigma} \sum_{s \in \t} \l f,\varphi_s \r  \psi_{s}^{(k)}(x)\cic{1}_{  \omega_{2s}} (v(x)) \Big|\\ & = \Big| \sum_{\t \in \F_\sigma} \cic{1}_{\sh_k(\t)}(x) \cic{1}_{\mathsf{crown}(\t)}(v(x)) \sum_{s \in \t} \l f,\varphi_s \r  \psi_{s}^{(k)}(x)\cic{1}_{v(x) \in \omega_{2s}}\Big|  \nonumber \\
& \lesssim \min\{\log N, \sigma^{-2} |\log \sigma|^2 \} \Big(\sup_{\t \in \F_\sigma} \Hh_{\t^{(k)}}^\star f(x)\Big) \\ & \lesssim 2^{2k}(\log\log N)^2\min\{\log N \sigma |\log \sigma|^2 , \sigma^{-1} |\log \sigma|^4 \}
\end{align*}
Taking advantage of \eqref{outsideA1}, we then get, for $x \not \in A$,
 \begin{align*}\Hh_{\S_ {(k)}}^\star f(x)  &\leq \sum_{\sigma \lesssim 1} \Hh_{\S_\sigma^{(k)}}^\star
f(x)   \\& \lesssim 2^{2k}(\log\log N)^2\sum_{\sqrt{\log N} \geq \sigma^{-1} \geq 1} \sigma^{-1} |\log \sigma|^4 + 2^{2k}\log N(\log\log N)^2\sum_{\sigma \lesssim\sqrt{\log N}} \sigma |\log \sigma|^2  \\ &\lesssim 2^{2k} \sqrt{\log N}(\log\log N)^6,
\end{align*}
which completes the proof of the second part of \eqref{sect63-1*}. This in turn proves the weak-type bound \eqref{sect63-1}, and finally, Theorem \ref{main-ub3}. In the remainder of the section, we will show how to achieve a decomposition $\S=\cup_{\sigma \leq 2^K} \S_\sigma$ complying with properties \eqref{bessel-prel}, \eqref{count-prel}, and whose trees satisfy the exceptional set estimate \eqref{eset-treeest}.
\subsection{Lacunary and overlapping parts of $\S$} First of all, it suffices to assume that $\#(\V \cap \omega_{2s}) \geq 1$ for each $s \in \S$. Indeed, tiles with empty $\V \cap \omega_{2s}$ do not contribute to the model sum $\Hh_{\S^{(k)}}^\star.$
We  decompose
$$
\S = \L \cup \O, \quad \L=\{s \in \S:\V \cap \omega_{1s}=\emptyset\}, \quad  \O=\{s \in \S:\V \cap \omega_{1s}\neq\emptyset\}
$$
Then, $\S_\sigma$ will be obtained as the union   $\L_\sigma \cup \O_\sigma$. The forest $\L_\sigma$ will be the union of lacunary trees $\t \in \F_\sigma^\L$, while the forest $\O_\sigma$ will be the union of overlapping trees $\t \in \F_\sigma^\O$.  For $* \in \{\L,\O\}$, and $\sigma \leq 2^K$,   we will   achieve the properties
\begin{align}
\label{bessel-in} &
\sum_{\t \in \F^*_{\sigma}} |\sh(\t)| \lesssim \sigma^{-2}\|f\|_2^2,\\
\label{count-in} &
\sup_{v \in \V}\sum_{\t \in \F_\sigma^*} \cic{1}_{\mathsf{crown}(\t)} (v) \leq C\log N,  \\
\label{eset-in}& \t \in \F^*_\sigma \implies
\big|\big\{x : \Hh^\star_{\t^{(k)}} f (x) \gtrsim 2^{2k} \sigma|10\log\sigma|^2 (\log \log N)^2 \big\}\big| \lesssim \sigma^{10}2^{2k}|\sh (\t)|.
\end{align}
This will imply the corresponding properties for $\S_\sigma$, and in turn, complete the proof of Theorem 3.
\subsection{The lacunary part $\L$} We begin by noting that the intervals
$
\{\omega=\omega_{1s}: s \in \L\}
$
are pairwise disjoint. Indeed, if we had $\omega_{1s}$ strictly contained in $ \omega_{1s'}$, for $s, s' \in \L$, then $\omega_{2s} \subset \omega_{1s'}$ too. This would force $\#(\V \cap \omega_{1s'}) \geq 1$, which we ruled out. As a consequence, all trees $\t \subset \L$ are of lacunary type, and the almost-orthogonality relation
\begin{equation}
\label{ao}
\sum_{s \in \L } |\l f, \varphi_s \r|^2 \lesssim \|f\|_2^2
\end{equation}
holds. We organize $\L$ into  subcollections $\L(j)$ by setting
$$
\L(j)= \{s \in \L: 2^{j} \leq \omega_{2s} <2^{j+1}\}, \qquad j=1, \ldots, \log N.
$$
Recall that $\pi_s =\omega_{1s} \cup \omega_{2s}$. For each $\L(j),$ we distinguish the minimal (with respect to inclusion) dyadic intervals $\{\pi^j_\ell\}$ inside $\{\pi=\pi_s: s \in \L(j)\}$, and form the lacunary trees
$$
\t(j,\ell)= \{s \in \L(j): \omega_{2s} \supset \pi^j_\ell\}, \qquad \L(j) = \cup_\ell \t(j,\ell).$$ It is clear that each $\pi_{s}$ contains exactly one $\pi^j_\ell$. This forces  $\pi_s\cap \pi_{s'}=\emptyset$ whenever $s \in \t(j,\ell), s' \in \t(j,\ell'), \ell'\neq \ell.$ Thus for each $j$ the sets $\{\mathsf{crown}(\t(j,\ell)):\ell\}$ are pairwise disjoint. We conclude that
\begin{equation}
\label{count-lacunary1}
\sup_{v \in S^1}\sum_{j=1}^{\log N} \sum_{ \ell } \cic{1}_{\mathsf{crown}(\t(j,\ell))}(v) \leq \log N.
\end{equation}
We now construct $\F_{\sigma}^\L=\cup_{j=1}^{\log N }\cup_ \ell \t_\sigma(j,\ell)$ by    chopping each  tree $\t (j,\ell)$ into subtrees with
$\size(\t_\sigma(j,\ell))\leq \sigma$. The chopping is obtained via the following iterative procedure.
\begin{itemize}
\item \textsf{INIT} $\mathrm{Stock}:=\t (j,\ell)$, $\sigma=2^K$;
\item \textsf{WHILE} $\mathrm{Stock}\neq \emptyset$, select the largest collection $\t_\sigma(j,\ell)\subset \mathrm{Stock}
$ with
$$
\sum_{s \in \t_\sigma(\alpha) } |\l f,\varphi_s\r|^2  \geq{\textstyle\frac{ \sigma^{2}}{4}} |\sh(\t_\sigma(j,\ell))|.
$$
Set $\mathrm{Stock}:=\mathrm{Stock}\backslash \t_\sigma(j,\ell)$; $\sigma:=\frac\sigma2$.
\end{itemize}  The almost-orthogonality property \eqref{ao} yields that\begin{equation} \label{sect2-orthpf4v}
 \sum_{j=1}^{\log N} \sum_{\ell}|\sh(\t_\sigma(j,\ell))|\lesssim \sigma^{-2}\sum_{s \in \L } |\l f,\varphi_s\r|^2 \lesssim \sigma^{-2 } \|  f\|_2^2,
\end{equation}
so that \eqref{bessel-in} for $*=\L$ is verified. Moreover, \eqref{count-in} follows \emph{a fortiori} from \eqref{count-lacunary1}
All that is left to verify is the single tree estimate \eqref{eset-in}. Let $\t= \t(j,\ell) \in \F^\L_\sigma$ ($j$ and $\ell$ have no particular relevance). For each $v \in \V \cap \mathsf{crown}(\t)$, let us isolate the subtree of $\t$ contributing at $v$, that is
$
\t(v)= \{s \in \t:v \in \omega_{2s}\}.
$ Set $\T(\t)=\{\t(v): v  \in \V \cap \mathsf{crown}(\t)\}$.
With this position, using the notation of \eqref{st-lac-k-ms}, we have
$$\Hh_{\t^{(k)}}^\star f(x)= \sup_{\t' \in \T(\t)} |f_{\t'}^{(k)}(x)|.
$$
We claim that, as a consequence of the Vargas set property, $\T(\t)$ contains at most $C\log N$ distinct subtrees $\t'$. The proof of the claim is postponed at the end of the section. Assuming for now that $\#\T(\t) \leq C\log N$, we can complete the proof of the single tree estimate as follows:
\begin{align}
\label{inequalcraz} &\quad
\big|\big\{x : \Hh^\star_{\t^{(k)}} f (x) \gtrsim 2^{2k} \sigma|10\log\sigma|^2 (\log \log N)^2 \big\}\big| \\\nonumber & \leq \sum_{\t' \in \T(\t)} \big|\big\{x : |f_{\t'}^{(k)}(x)| \gtrsim 2^{2k} \sigma|10\log\sigma|^2 (\log \log N)^2 \big\}\big|
\\ & \leq \sum_{\t' \in \T(\t)} \exp\Big(-c \textstyle \sqrt{\frac{2^{2k}\sigma |\log\sigma|^2 (\log \log N)^2}{\size(\t')}} \Big)|\sh_k(\t')| \nonumber\\&\lesssim 2^{2k}\sigma^{10}  \sum_{\t' \in \T(\t)} \e^{ -\log \log N} |\sh_k(\t')| \leq 2^{4k}\sigma^{10} |\sh(\t)|. \nonumber
\end{align}
We used Lemma \ref{st-lac-k}, and the fact that $\size(\t') \leq \sigma$. Thus, the proof of \eqref{eset-in}, for $\star=\L$ is complete.

\begin{proof}[Proof of the claim]
Ordering  $ \V \cap \mathsf{crown}(\t)=\{v_1,v_2,\ldots\}$ counterclockwise, we say that $v_i$ is a cutoff direction if $\t(v_{i})\supsetneq \t(v_{i-1})$. We show that the sequence $\{v_i: v_i$ is a cutoff$\}$ is lacunary with a top direction of $\t$, $v_\t$, as its node, and thus it can have at most $C\log N$ elements, which proves the claim. By pigeonholing, it suffices to prove this under the additional sparseness assumption
$$
s, s' \in \t, \, \omega_{2s} \subset \omega_{2s'} \implies 8|\omega_{2s}| \leq  |\omega_{2s'}|.
$$
This implies that $|\mathsf{crown}(\t(v_{i_m}))|\geq 8|\mathsf{crown}(\t(v_{i_{m+1}}))|$ for two consecutive cutoffs $v_{i_{m}}$ and $v_{i_{m+1}}$ with      $i_m<i_{m+1}$.
Thus
$$
|v_{i_{m}}-v_\t| \geq \frac12|\mathsf{crown}(\t(v_{i_m}))| \geq 4 |\mathsf{crown}(\t(v_{i_{m+1}}))| \geq 2|v_{i_{m+1}}-v_\t|,
$$
and this shows that the cutoffs are lacunary with node $v_\t$. The claim is proved.
\end{proof}
\subsection{The overlapping part $\O$}
We start with the construction of $\F_\sigma^\O$. The trees of $\F_\sigma^\O$ will be overlapping trees obtained as saturation of conical trees (see the definitions in Section 6) with conical size roughly $\sigma$. Let us give a precise definition of saturation.

Let $\ct$ be a conical tree. Note that $\{R_s:s \in \ct \}$ is a collection of dyadic rectangles with the same shape, coming from a single dyadic grid. Thus, the tiles
$\ct_{\max} $ with $R_s$  maximal  are well defined and   $\sh(\ct)$ is the disjoint union of $\{R_s:s \in \ct_{\max} \}$.
%Since the spatial components of the  tiles $\{s \in \ct: \annf(s)=\ann\} $ are pairwise disjoint dyadic rectangles with same orientation and sidelengths, it is immediate to see that for a conical tree $\ct$
%\begin{equation} \label{conical-orthog}
%\sum_{s \in \ct} |\l g, \varphi_s \r|^2 \lesssim \|g\|_2^2.
%\end{equation}
The saturation $\t(\ct)$ of a conical tree is defined as
$$
\t(\ct)=\{s' \in \S: \omega_{1s'} \supset \omega_\ct, R_{s'} \subset 10 R_{s} \textrm{ for some } s \in \ct_{\max}\}.
$$ We omit the dependence on $\ct$ on the notation when no confusion arises.
Clearly, $\t$ is an overlapping tree with top $\omega_{\ct}$ and $|\sh(\t)| \leq 10 |\sh( \ct)| .$

The next lemma,  along the lines of (for example) \cite[Lemma 8.2]{THWPA}  or \cite[Lemma 6.6]{MTT} will be used in the iterative construction of $\F_\sigma^\O$. We sketch a proof at the end of the subsection.
\begin{lemma}[Size Lemma] \label{sect6-sizelemma}
Let $\O'\subset\O$ be a collection of tiles with $\size_\nabla(\O') =\sigma$.
Then $\O':=\O'_{\mathrm{small}} \cup \O'_{\mathrm{big}}$ where
\begin{equation}
 \mathrm{size}_\nabla (\O'_{\mathrm{small}}) \leq \frac{\sigma}{2}
\end{equation}
and
  $\O'_{\mathrm{big}} = \cup_{\t \in \F} \t$.  Each $\t \in \F$ is a saturation of a conical tree $\ct$, and
\begin{equation}
 \sum_{\t \in \F} |\sh(\t)| \lesssim  \sigma^{-2 }\| f\|_2^2; \label{besself}
 \end{equation}
\end{lemma}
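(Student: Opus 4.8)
The proof follows the standard greedy tree-selection scheme (compare \cite[Lemma 8.2]{THWPA} and \cite[Lemma 6.6]{MTT}); the only genuinely new feature is that, since our tiles are bi-parameter and oriented along all directions of $\V$, the selection must be run with \emph{conical} trees rather than ordinary trees, so that the relevant wave packets $\varphi_s$ are \emph{product} wave packets and the orthogonality relation \eqref{aorel} applies. First I would run the selection. Initialize $\mathrm{Stock}:=\O'$, $\F:=\emptyset$, and iterate: as long as there is a conical tree $\ct\subset\mathrm{Stock}$ with
\[
\frac{1}{|\sh(\ct)|}\sum_{s\in\ct}|\l f,\varphi_s\r|^2\geq\frac{\sigma^2}{4},
\]
choose one whose top interval $\omega_\ct$ comes first in a fixed enumeration of dyadic intervals (say, leftmost left endpoint, ties broken by leftmost right endpoint), form the saturation $\t=\t(\ct)$, put $\t$ into $\F$, and replace $\mathrm{Stock}$ by $\mathrm{Stock}\setminus\t$. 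Since $\O'$ is finite this halts; set $\O'_{\mathrm{small}}$ equal to the terminal stock and $\O'_{\mathrm{big}}=\O'\setminus\O'_{\mathrm{small}}\subset\bigcup_{\t\in\F}\t$. As the stock only shrinks, $\size_\nabla(\mathrm{Stock})\leq\sigma$ at all times, so every selected conical tree has density in $[\sigma^2/4,\sigma^2]$; and by the halting criterion no conical subtree of $\O'_{\mathrm{small}}$ has density $\geq\sigma^2/4$, i.e.\ $\size_\nabla(\O'_{\mathrm{small}})\leq\sigma/2$. Since every $\t\in\F$ is a saturation of a conical tree, it only remains to prove \eqref{besself}.

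Because $|\sh(\t(\ct))|\leq10|\sh(\ct)|$ and, by the selection threshold, $|\sh(\ct)|\leq\frac{4}{\sigma^2}\sum_{s\in\ct}|\l f,\varphi_s\r|^2$, the bound \eqref{besself} reduces to the Bessel-type energy inequality
\[
\sum_{\ct\in\F}\sum_{s\in\ct}|\l f,\varphi_s\r|^2\lesssim\|f\|_2^2.
\]
For a single conical tree this is exactly \eqref{aorel}, since a conical tree is in particular lacunary. To sum over $\F$ I would show that the selected conical trees are pairwise \emph{strongly disjoint}: if $\ct\neq\ct'$ in $\F$ with $\ct$ selected first, there are no $s\in\ct$, $s'\in\ct'$ at the same annulus whose frequency intervals $\omega_{1s},\omega_{1s'}$ are nested and whose spatial rectangles (essentially) overlap. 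One nesting, $\omega_{1s'}\supset\omega_\ct$ with $R_{s'}$ close to $\sh(\ct)$, is precisely what the removal of the saturation $\t(\ct)$ rules out; the opposite nesting is incompatible with $\omega_\ct$ being first in the enumeration at the moment $\ct$ was selected. Granting strong disjointness, the Gram matrix of $\{\varphi_s:s\in\bigcup_{\ct\in\F}\ct\}$ is almost diagonal: distinct annuli give exact orthogonality because $\ann(s)\in4^{\Z}$, while at a common annulus the $\Omega_{1s}$ are disjoint or the $R_s$ spatially separated, so the rapid decay \eqref{L2-ad} of the product wave packets makes the off-diagonal row sums geometrically convergent to $O(1)$. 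Schur's test (or the Cotlar--Stein lemma) then gives $\big\|\sum_s c_s\varphi_s\big\|_2\lesssim\big(\sum_s|c_s|^2\big)^{1/2}$, and dualizing yields the energy inequality; together with $\size_\nabla(\O'_{\mathrm{small}})\leq\sigma/2$ this proves the Size Lemma.

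The step I expect to be the main obstacle is the strong disjointness, and within it the pairs of tiles whose frequency intervals are nested but whose spatial rectangles are far apart: there neither the saturation (which only clears tiles near $\sh(\ct)$) nor the minimality of $\omega_\ct$ alone suffices, and one must trade the frequency nesting against the spatial tail decay of the product wave packets, keeping careful track of a bounded-overlap count for the rotated dyadic rectangles $R_s$ --- which, in the bi-parameter setting, come from finitely many grids and finitely many orientations at each annulus. Once this count and the accompanying tail summation are in hand, the rest is routine bookkeeping, parallel to the one-parameter model developed in \cite{THWPA} and \cite{LL1}.
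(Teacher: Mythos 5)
Your overall scheme is the one the paper uses: a greedy extraction of conical trees at the density threshold $\sigma^2/4$, saturation and removal, and then the reduction of \eqref{besself} to a Bessel--type inequality $\sum_{\ct\in\F_+}\sum_{s\in\ct}|\l f,\varphi_s\r|^2\lesssim\|f\|_2^2$, which you rightly identify as the crux. But the step you flag as ``the main obstacle'' is genuinely unresolved in your sketch, and it is the heart of the lemma, so this constitutes a real gap. The paper establishes the spatial separation through two observations that depend crucially on the interplay between the $2$-sparseness of the $\omega$-grid, the selection order, and the saturation. Let $s\in\ct$ and $s'\in\mathbf{C}(s)$, i.e.\ $s'$ lies in a later-selected tree $\ct'$ with $\omega_{1s}\subsetneq\omega_{1s'}$ (after symmetrizing, this is the only off-diagonal case besides $\omega_{1s}=\omega_{1s'}$, which is handled trivially). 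By $2$-sparseness, $\omega_s\subsetneq\omega_{1s'}$, hence $\omega_\ct\subsetneq\omega_{1s'}$ and $|\omega_\ct|<|\omega_{\ct'}|$; selecting $\omega_\ct$ minimal at each step therefore guarantees $\ct$ was extracted before $\ct'$. At the moment $\ct$ was extracted, $s'$ was still in the stock and satisfied the frequency criterion $\omega_{1s'}\supset\omega_\ct$ for membership in the saturation $\t(\ct)$, so it must have failed the spatial one: $R_{s'}\not\subset 10R_{\bar s}$. Combined with $\ann(s)=\ann(s')$ (else $\l\varphi_s,\varphi_{s'}\r=0$) and $|\omega_{s'}|\geq 4|\omega_s|$, which make $R_{s'}$ small relative to the margin of $R_{\bar s}$, this yields $R_{s'}\cap 8R_{\bar s}=\emptyset$ (Observation 1). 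Applying the same argument to a pair $s',s''\in\mathbf{C}(s)$ from distinct trees shows the rectangles $\{R_{s'}:s'\in\mathbf{C}(s)\}$ are \emph{pairwise disjoint} (Observation 2), not merely of bounded overlap as you conjecture.

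With those two geometric facts the paper does not need Cotlar--Stein or a Schur test for the full Gram matrix; it expands $\|F(\O'_+)\|_2^2$ as a quadratic form, isolates the $\mathbf{B}$ and $\mathbf{C}$ contributions, and for the $\mathbf{C}$ part sums the tail
\begin{equation*}
\sum_{s'\in\mathbf{C}(s)}|R_{s'}|\Big(1+\tfrac{\mathrm{dist}(R_s,R_{s'})}{|R_s|}\Big)^{-100}\lesssim 2^{-10n}|R_s|,
\end{equation*}
directly, where $2^n$ measures how deep $R_s$ sits inside $\sh(\ct)$ --- this is why disjointness (not merely bounded overlap) of the $R_{s'}$ is needed, and why the trade-off you describe ``one must trade the frequency nesting against the spatial tail decay'' is resolved cleanly. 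A secondary remark: your proposed enumeration (leftmost left endpoint, ties by leftmost right endpoint) does happen to order $\omega_\ct$ before $\omega_{\ct'}$ in the $\mathbf{C}$ case, because $\omega_\ct\subsetneq\omega_{1s'}$ and $\omega_{\ct'}$ is the \emph{right} sibling of $\omega_{1s'}$, so $\omega_\ct$ lies entirely to the left of $\omega_{\ct'}$; but it is the length comparison $|\omega_\ct|<|\omega_{\ct'}|$ from $2$-sparseness that is doing the work, and the selection rule should be stated as ``$\omega_\ct$ minimal'' (and $\sh(\t)$ maximal) to make the argument transparent, as the paper does.
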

Starting with $\O'=\O$ (which has $\size_\nabla(\O)=2^K$) and iteratively applying Lemma \ref{sect6-sizelemma}, we obtain the decomposition
$
\O= \cup_{\sigma\leq 2^K}   \O_\sigma,  $
with
\begin{align}   \label{conicalsizeppa}    & \size(\O_\sigma)\leq \sigma,\\
\label{conicalsizeppb}    &\displaystyle \O_{\sigma}=\bigcup_{\t \in \F_\sigma^{\O}} \t,  \textrm{\emph{ and each }}  \t \in \F_\sigma \textrm{\emph{ is the saturation of a conical tree,}}\\
 \label{conicalsizeppc}  &  \# \big( \cic{\omega}_\t:= \{\omega:\omega=\omega_{2s}  \textrm{\emph{ for some }} s \in \tau(\ct)\} \big) \leq C\log N,\\
\label{conicalsizeppd}  \displaystyle& |\sh(\t)| \lesssim  \sigma^{-2 }\| f\|_2^2.
\end{align}
The only property which is not straightforward from the lemma is \eqref{conicalsizeppc}. This is an easy consequence of the Vargas set property: the intervals of $\cic{\omega}_\t$ are pairwise disjoint, and each contains at least one $v\in \V$. Denote with  $v_\ell$ any of the directions of $\V$ contained in $\omega \in \cic{\omega}_\t$, $|\omega|=2^{-\ell}$. Then $2^{-\ell-1 }\leq |v_\ell-v_\t| \leq 2^{-\ell}$, which means $\{v_\ell\}$ is a lacunary sequence with node $v_\t$. Thus, $\{v_\ell\}$ (and in turn $\cic{\omega}_\t$) contains at most $C\log N$ elements.

We now reshuffle the trees in $\F_\sigma^\O$ to make sure that \begin{equation}
\label{count-ov1}
\sup_{v \in S^1}\sum_{\t \in \F_\sigma^\O}   \cic{1}_{\mathsf{crown}(\t)}(v) \leq C \log N
\end{equation}
also holds true.
 For each dyadic interval $\omega$, consider the collection of tiles $s \in \F_\sigma^\O$ with $\omega_{2s}=\omega$. These tiles are partially ordered by the relation $s\ll s'$ if $R_s \subset R_s'$. Then remove each tile $s$ which is not maximal with respect to this order relation from the   tree $\t$ to which $s$ belongs, and assign $s$ to the   tree $\t'$ where the maximal tile $s'$ s.t.\ $s\ll s'$ belongs. We keep calling the resulting forest $\F_\sigma^\O$. Since this reshuffling does not modify  $\O_\sigma$ and since it only makes  $ \cic{\omega}_\t$ and $\sh(\t)$ smaller, properties \eqref{conicalsizeppa}-\eqref{conicalsizeppd} still hold.

 As a result of the reshuffling, all the tiles $s \in \O_\sigma$ with $\omega_{2s}=\omega$ will belong to the same  tree $\t$, and \eqref{count-ov1} follows easily. Suppose that the sum in \eqref{count-ov1} is equal to $r>0$. Then $v$ belongs to $r$ (nested) dyadic intervals $\cic{\omega}=\{\omega\}$ and  for each of them we have $\omega=\omega_{2s}$ for  $s$ in some (uniquely determined) tree $\t$. Then, the collection $\{\omega_{1s}: \omega_{2s}=\omega, \omega \in \cic{\omega}\}$ is pairwise disjoint and each $\omega_{1s}$ contains at least one element of $  \V$. The same argument used to show \eqref{conicalsizeppc} implies that $r \leq C\log N$.

Having at our disposal \eqref{besself} and \eqref{count-ov1}, we have established \eqref{bessel-in} and \eqref{count-in} for $*=\O$. We are left with proving the single tree estimate \eqref{eset-in} for an overlapping tree $\t \in \F_\sigma$.
We observe that $\t$ is a disjoint union of (at most $C\log N$) conical trees, $$
\t = \cup_{\omega\in \cic{\omega}_\t} \ct(\omega), \qquad
\ct(\omega):=\{s \in \t:\omega_{2s}=\omega\}.
$$
The intervals in $\cic{\omega}_\t$ are pairwise disjoint, so that, with the notation of \eqref{st-ov-k-est},
$$\Hh_{\t^{(k)}}^\star f(x)= \sup_{\omega\in \cic{\omega}_\t} |f_{\ct(\omega)}^{(k)}(x)|.
$$
Then, the estimate
$$
\big|\big\{x : \Hh^\star_{\t^{(k)}} f (x) \gtrsim 2^{2k} \sigma|10\log\sigma| \log \log N  \big\}\big| \lesssim \sigma^{10}2^{2k}|\sh (\t)|,$$
which is even stronger than what we need for \eqref{eset-in}, is obtained in the same way as  \eqref{inequalcraz}, applying the estimate for conical trees \eqref{st-ov-k-est} in place of \eqref{st-lac-k-est}. This concludes the proof of the theorem.
\begin{proof}[Proof of Lemma \ref{sect6-sizelemma}] We insist only on the points which are different from the classical size lemma in \cite{LT}; see \cite{THWPA} and \cite{MTT} for more details.

We can assume that the uncertainty intervals $\{\omega_{1s}: s \in \O'\}$ come from a single 2-sparse dyadic grid.
The following iterative procedure is used to separate $\O'_{\mathrm{small}}$ from $\O'_{\mathrm{big}}$:
\begin{itemize}
\item \textsf{INIT}  $\F_+,\F'=\emptyset,\; \mathrm{Stock}:=\O',\; \O'_{\mathrm{big}}:=\emptyset;$
 \item   \textsf{WHILE}  $\size_{\nabla}(\mathrm{Stock}:)\geq \frac{\sigma}{2}$
\begin{itemize} \item[$\cdot$] select a conical   $\ct$ with $ \displaystyle
\sum_{s \in \ct} |\langle f,\varphi_s\rangle|^2 \geq \textstyle \frac{\sigma^2}{4}|\sh(\ct) |$, with $\omega_\t$  minimal, and   $\sh(\t)$ maximal (with respect to inclusion);
\item[$\cdot$] \textsf{UPDATE}   $\F_+=\F_+ \cup \ct$, $\F=\F'\cup\t(\ct)$, $\; \mathrm{Stock}:= \mathrm{Stock}-\t(\ct).$
\end{itemize}
\end{itemize}
When the algorithm stops, set $\O'_{\mathrm{small}}:= \mathrm{Stock} $, $\O'_{\mathrm{big}}=\cup_{\t \in \F } \t $, $\O'_+=\cup_{\ct \in \F_+} \ct.$  The iteration procedure ensures that $\size_\nabla(\O'_{\mathrm{small}}) \leq \frac\sigma 2$. We are left with  proving the inequality
\begin{equation}
\label{bessel}
\sigma^2\sum_{ \ct \in \F_+} |\sh(\ct)| \lesssim \|f\|_2^2.
\end{equation}
We will use the notation $F(\S'):=\sum_{s \in \S'} \l f,\varphi_s\r \varphi_s.$ Standard arguments reduce the proof of \eqref{bessel} to showing that \begin{equation}
\label{bessel-pf1}
\|F(\O'_+)\|_2^2 \lesssim \sigma^2 \sum_{\ct \in \F_+} |\sh(\ct)|.
\end{equation}
For   $s \in \ct \in \F_+$, define $\mathbf{B}(s)= \{s' \in \O'_+ -\ct:  \omega_{1s}=\omega_{1s'} \}, \, \mathbf{C}(s)= \{s' \in  \O'_+ -\t:  \omega_{1s}\subsetneq{\omega}_{1s'} \}. $ The tiles $ s' \in \mathbf{B}(s)\cup \mathbf{C}(s)$ are the ones with $\l\varphi_s,\varphi_{s'} \r \neq 0$. Then
$$
\|F(\O'_+)\|_2^2 \lesssim  \sum_{\ct \in \F_+} \|F(\ct)\|^2_2
+
\sum_{s \in \O'_+} \l f,\varphi_s\r \l\varphi_s, F(\mathbf{B}(s)) \r + \sum_{s \in \O'_+} \l f,\varphi_s\r \l\varphi_s, F(\mathbf{C}(s)) \r
$$
The fact that first and second summand obey the bound  of \eqref{bessel-pf1} is a simple consequence of the almost-orthogonality of the wave packets inside a single conical tree.  The third summand is controlled   by summing over   $\ct \in \F_+$ the estimate
\begin{equation*}
%\label{bessel-pf2}
\sum_{s \in \ct} \l f,\varphi_s\r \l\varphi_s, F(\mathbf{C}(s)) \r  \lesssim \sigma^2 |\sh(\ct)|.
\end{equation*}
 Clearly this follows if we establish that, for each $\bar s \in \ct_{\max}$,
\begin{equation}
\label{bessel-pf3}
\sum_{s \in \ct(\tilde s) } \l f,\varphi_s\r \l\varphi_s, F(\mathbf{C}(s)) \r  \lesssim \sigma^2 |R_{\bar s}|.
\end{equation}
where $\ct(s):=\{s \in \ct: R_s \subset R_{\tilde s}\}.$
Split the collection $\ct(\bar s)$ into $$\ct(\bar s)=\bigcup_{n\geq 0}  \ct(\bar s)^{(n)},\qquad  \ct(\bar s)^{(n)}: =\{s \in \ct(\bar s): 2^{n}R_s \subset 5R_{ \bar s}, 2^{n+1}R_s \not\subset 10R_{\bar s} \}  .$$
The rectangles  $\{R_s: s \in \ct(\bar s) \}$ come from a single dyadic grid in the plane, hence
$$
\sum_{s \in  \ct(\bar s)^{(n)} } |R_{  s}| \lesssim 2^{  n}  |R_{\bar s}|,
$$
so that \eqref{bessel-pf3} is obtained by summing over $s \in  \ct(\bar s)^{(n)}$ and $n\geq 0$ the estimate
  \begin{equation}
 \label{bessel-pf4}
 \sum_{s' \in \mathbf{C}(s)} |\l f,\varphi_s\r| |\l \varphi_{s'}, f\r| |\l\varphi_s, \varphi_{s'} \r|  \lesssim 2^{-10n}\sigma^2 |R_{s}|.
\end{equation}
We now show \eqref{bessel-pf4}, beginning with two  observations.

 \noindent \textsc{Obs.1}: let $s \in \ct$ and $s' \in \mathbf{C}(s)$. The 2-sparseness property we assumed implies $\omega_{\ct} \subset \omega_{1s'}$, so that   $\ct$ had been  selected before the tree containing $s'$ (by minimality of $\omega_\ct$).   If $\bar s \in \ct_{\max}$ is the one tile for which $s \in \ct(\bar s)$,  the saturation procedure then forces    $8R_{\bar s} \cap R_{s'} = \emptyset.$  \vskip1.5mm
\noindent \textsc{Obs.2}: by the same token, let $s  \ct$, $s' \neq s''$ with $s',s'' \not\in \ct $. Then
$$\omega_{1\ct} \subset   \omega_{1s'} \cap \omega_{1s''},   \,
\annf(s') =\annf(s'')\implies
R_{s'}\cap R_{s''}=\emptyset. $$
Indeed,  the 2-sparseness ensures
$\omega_{\ct} \subset \omega_{1\ct'}\cap  \omega_{1\ct''}$.  In the case $\omega_{1\ct'}=\omega_{1\ct''}$, the conclusion follows because $R_{s'}$ and $R_{s''}$ are dyadic rectangles with the same orientation and sidelengths. If not,  assume with no loss in generality that $\omega_{\ct'} \subsetneq \omega_{1\ct''} $, then the conclusion follows exactly as in Observation 1. \vskip1.5mm

Therefore the rectangles $\{R_{s'}: s' \in \mathbf{C}(s) \}$ are pairwise disjoint (by Observation 2) and do not intersect $2^{n} R_s$, (by Observation 1), which is strictly contained in $8R_{\bar s}.$
Since $\size_\nabla$ controls the size of the single tiles, $ |\l f,\varphi_s\r| \lesssim \sigma \sqrt{|R_s|},$ $ |\l f,\varphi_{s'}\r| \lesssim \sigma \sqrt{|R_{s'}|}.$ Using this fact and the rapid decay of the wave packets,  \eqref{bessel-pf4} is a consequence of
$$
 \sum_{s' \in \mathbf{C}(s)} |R_{s'}| \Big(1+\frac{\mathrm{dist}(R_s,R_{s'})}{|R_s|}\Big)^{-100} \lesssim \int_{(2^{n}R_s)^c}   \Big(1+\frac{\mathrm{dist}(x,R_{s'})}{|R_s|}\Big)^{-100} \,\d x\lesssim 2^{-10n} |R_{s}|.
$$
This concludes the proof of \eqref{bessel-pf4}, and, in turn, of Lemma \ref{sect6-sizelemma}
\end{proof}

\section{Final remarks}Let $f:\R^2\to\mathbb C$.
The main unresolved issue in the paper is whether the model sum operator
$$\mathsf{MS}f(x)=\sup_{v\in \V}|\sum_{s\in\S_u } {\l f, \varphi_{s} \r} \psi_{s}(x)  1_{\omega_{2s}}(v)|$$
has the $L^2\to L^{2,\infty}$ norm of order $O(\sqrt{\log N}(\log\log N)^{O
(1)})$, for arbitrary $\V\subset S^1$ with $N$ elements. Here, as before, $\varphi_{s},\psi_s$ are product wave packets adapted to $R_s$.

Consider the related operators
$$\mathsf{S}f(x)=\sup_{v\in\V}\bigg(\sum_{\ann}\big|\sum_{s\in\S_u:\ann(s)=\ann } {\l f, \varphi_{s} \r} \psi_{s} (x)1_{\omega_{2s}}(v)\big|^{2}\bigg)^{1/2}$$
$$\mathsf{SQ}f(x)=\sup_{v\in\V}\bigg(\sum_{s\in\S_u} |{\l f, \varphi_{s} \r}|^{2}\frac{1_{R_s}(x)}{|R_s|}1_{\omega_{2s}}(v)\bigg)^{1/2}$$
$$\mathsf{SC}f(x)=\sup_{v\in\V}\sup_{\omega\subset S^1}\bigg(\sum_{s\in\S_u:\omega_s=\omega} |{\l f, \varphi_{s} \r}|^{2}\frac{1_{R_s}(x)}{|R_s|}1_{\omega_{2s}}(v)\bigg)^{1/2}.$$
Note the striking simplicity of $\mathsf{SC}$. For each $x$, the contribution comes from tiles with a fixed eccentricity (i.e.\ from a single frequency cone).

By invoking a simple randomization argument, it easily follows that
$$\|\mathsf{SC}\|_{L^2\to L^{2,\infty}}\lesssim\|\mathsf{SQ}\|_{L^2\to L^{2,\infty}}\lesssim\|\mathsf{S}\|_{L^2\to L^{2,\infty}}\lesssim\|\mathsf{MS}\|_{L^2\to L^{2,\infty}}.$$
The fact that $\|\mathsf{S}\|_{L^2\to L^2}\lesssim \sqrt{\log N}$ was proved in \cite{CD10}, and this is optimal for generic $\V$. However, we strongly suspect that $\|\mathsf{S}\|_{L^2\to L^{2,\infty}}\lesssim (\log \log N)^{O(1)}$. If one proves this, then the conjectured bound $\|\mathsf{MS}\|_{L^2\to L^{2,\infty}}\lesssim \sqrt{\log N}(\log\log N)^{O
(1)}$ follows via the Chang-Wilson-Wolff inequality, as described in Section \ref{lac}. The methods in this paper seem to be unable to conclude even the bound $\|\mathsf{SC}\|_{L^2\to L^{2,\infty}}\lesssim (\log \log N)^{O(1)}$.

\bibliography{biblio}{}
\bibliographystyle{amsplain}

 \end{document}